\documentclass{siamltex1213}
\usepackage{color}
\usepackage{xcolor}
\usepackage{amsmath}
\usepackage{amssymb}
\usepackage{mathdots}
\usepackage{graphicx}
\usepackage{subfigure}
\usepackage{url}
\usepackage[utf8]{inputenc}
\usepackage{easybmat}
\newcommand{\e}{\ensuremath\varepsilon}

\newcommand{\la}{\ensuremath\lambda}

\newcommand{\rev}{\ensuremath\mathrm{rev}}

\newcommand{\pdeg}{\ensuremath d}

\newcommand{\lpol}[1]{\ensuremath \mathcal{L}(#1)}
\newcommand{\rowdim}{\ensuremath m}
\newcommand{\coldim}{\ensuremath n}

\newcommand{\EL}[1]{\ensuremath\mathcal{L}_{#1}(\lambda)}

\newcommand{\DEL}[1]{\ensuremath\Delta\mathcal{L}_{#1}(\lambda)}
\newcommand{\FF}{{\mathbb F}}

\newtheorem{remark}[theorem]{Remark}
\newtheorem{example}[theorem]{Example}

\newif\iffinal
\finalfalse

\iffinal

\else

\fi

\title{Block Kronecker Linearizations of Matrix Polynomials and their Backward Errors
  \thanks{This work was partially supported by the ``Ministerio de Econom\'{i}a, Industria y Competitividad of Spain'' and ``Fondo Europeo de Desarrollo Regional (FEDER) of EU'' through grants MTM-2012-32542, MTM-2015-68805-REDT, MTM-2015-65798-P, by the Belgian network DYSCO (Dynamical Systems, Control, and Optimization), funded by the Interuniversity Attraction Poles Programme initiated by the Belgian Science Policy Office, and by the Engineering and Physical Sciences Research Council of UK through grant EP/I005293.}}

\begin{document}

\author{Froil\'{a}n M. Dopico\footnotemark[2] \and Piers W. Lawrence\footnotemark[3]\ \footnotemark[5] \and Javier~P\'{e}rez\footnotemark[4] \and Paul Van Dooren\footnotemark[3] }
\renewcommand{\thefootnote}{\fnsymbol{footnote}}
\footnotetext[2]{Departamento de Matem\'{a}ticas, Universidad Carlos III de Madrid,  Avenida de la Universidad 30, 28911, Legan\'{e}s, Spain. Email: {\tt dopico@math.uc3m.es}.}
\footnotetext[3]{Department of Mathematical Engineering, Universit\'{e} catholique de Louvain,  Avenue Georges Lema\^{i}tre 4, B-1348 Louvain-la-Neuve, Belgium. Email: {\tt paul.vandooren@uclouvain.be}.}
\footnotetext[4]{Department of Computer Science, KU Leuven, Celestijnenlaan 200A bus 2402, B-3001 Leuven, Belgium. Email: {\tt javierpa@gmail.com\emph{}}.}
\footnotetext[5]{Department of Computer Science, KU Leuven, Celestijnenlaan 200A bus 2402, B-3001 Leuven, Belgium. Email: {\tt piers.lawrence@cs.kuleuven.be}.}

\renewcommand{\thefootnote}{\arabic{footnote}}

\maketitle

\begin{abstract}
  We introduce a new family of strong linearizations of matrix polynomials---which we call ``block Kronecker pencils''---and perform a backward stability analysis of complete polynomial eigenproblems. These problems are solved by applying any backward stable algorithm to a block Kronecker pencil, such as the staircase algorithm for singular pencils or the QZ algorithm for regular pencils. This stability analysis allows us to identify those block Kronecker pencils that yield a computed complete eigenstructure which is exactly that of a slightly perturbed matrix polynomial. The global backward error analysis in this work presents for the first time the following key properties: it is a rigurous analysis valid for finite perturbations (i.e., it is not a first order analysis), it provides precise bounds, it is valid simultaneously for a large class of linearizations, and it establishes a framework that may be generalized to other classes of linearizations. These features are related to the fact that block Kronecker pencils are a particular case of the new family of ``strong block minimal bases pencils'', which are robust under certain perturbations and, so, include certain perturbations of block Kronecker pencils. We hope that this robustness property will allow us to extend the results in this paper to other contexts.
\end{abstract}

\begin{keywords}
Backward error analysis, polynomial eigenvalue problems, complete eigenstructure, dual minimal bases, linearization, matrix polynomials, matrix perturbation theory, minimal indices
\end{keywords}

\begin{AMS}
65F15, 65F35, 15A18, 15A22, 15A54, 93B18, 93B40, 93B60
\end{AMS}

\pagestyle{myheadings}
\thispagestyle{plain}

\markboth{F.~M. DOPICO, P.~W. LAWRENCE, J. P\'{E}REZ, AND P. VAN DOOREN}{BLOCK KRONECKER LINEARIZATIONS AND BACKWARD ERRORS}


\section{Introduction}
Matrix polynomials appear in many applications in engineering, mechanics, control, linear systems theory, and computer-aided geometric design. They may arise directly or as approximations of highly nonlinear eigenvalue problems. The classical works \cite{LancasterBook,kailath1980linear,rosenbrock1970} and the modern surveys \cite{Mehrmann04nonlineareigenvalue,Tisseur01thequadratic} include discussions of different applications of matrix polynomials. Those readers unfamiliar with matrix polynomials can find in Section \ref{sec:basic} most of the concepts mentioned in this introduction.

Square regular matrix polynomials are related to {\em polynomial eigenvalue problems} (PEPs), i.e., to the computation of all of the eigenvalues of the polynomial, while singular matrix polynomials are related to {\em complete polynomial eigenproblems} (CPEs), i.e., to the computation of all of the eigenvalues and of all of the so-called minimal indices of the polynomial.
Although in the last years the main focus has been on regular matrix polynomials, problems related to singular matrix polynomials are also quite common. Thus, in engineering practice, singular problems allow to add redundancy into the models and, in this way, to regularize ill-conditioned problems \cite{biegler-campbell-mehrmann-eds,Kunkel-Mehrmann-book,mehrmann}. Moreover, singular matrix polynomials are fundamental in the area of systems and control, where they model systems of differential equations whose behavior has to be ``controlled''. This was nicely synthesized in the pioneer work of Rosenbrock \cite{rosenbrock1970}, who introduced quadruples of matrix polynomials $\left\{ T(\lambda), U(\lambda), V(\lambda), W (\lambda) \right\}$ to model such systems. The Smith form \cite{gantmacher1960theory} of the matrix polynomials
\begin{equation*}
 P_p(\lambda)=
 T(\lambda) , \quad P_z(\lambda)=\left[ \begin{array}{cc}
 T(\lambda) & -U(\lambda) \\ V(\lambda) & W (\lambda)
 \end{array} \right],
\end{equation*}
and of the first block row and the first block column of $P_z(\la)$, denoted as $P_c (\la)$ and $P_o (\la)$, respectively,
define the so-called poles and zeros of the transfer function of such systems, as well as the notions of controllability and observability. The matrix polynomial
$P_p(\lambda)$ is square and invertible and defines the poles of the system, which are its natural frequencies. The matrix polynomial $P_z(\lambda)$
may be non-square or singular and describes the zeros of the system, which are the frequencies that are filtered by the system, and the minimal indices that characterize its left and right ``singular'' null space structures. Finally, the Smith form of the non-square matrix polynomials $P_c(\lambda)$ and $P_o(\lambda)$ yields conditions on the controllability and observability of the system.
The importance of computing the finer details of the Smith zeros and minimal indices of a matrix polynomial was already stressed in the eighties \cite{van1981eigenstructure,kailath1980linear}, and was revived later in the behavioral modeling of dynamical systems \cite{markovskybehavior}. It also appears in other problems in this area, as, e.g., in deadbeat control problems \cite{vandeadbeat}. In all of these problems it is very important to have reliable numerical algorithms for computing the relevant structural information of potentially singular matrix polynomials.

The numerical solution of PEPs and CPEs is usually performed by embedding the coefficients of the associated matrix polynomial into a larger linear matrix polynomial, or matrix pencil, called a {\em linearization}, and then applying well-established algorithms for matrix pencils to the linearization, like the QZ algorithm in the regular case \cite{golubvanloan4}, or the staircase algorithm in the singular case \cite{vandoorenstaircase}, potentially enhanced with the stratification of the orbits of pencils \cite{edelelmkags1,edelelmkags2}. This linearization approach for solving PEPs and CPEs was proposed for the first time in \cite{van1981eigenstructure,van1983eigenstructure}, the concept of linearization was formally introduced in \cite{LancasterBook} for regular matrix polynomials, and in \cite{de2008sharp} for singular ones. A thorough treatment of linearizations can be found in \cite{de2014spectral}.

The linearizations used most often to solve PEPs and CPEs are the well known Frobenius companion forms. They are used in \cite{van1983eigenstructure} and in the command {\tt polyeig} of MATLAB. They have many favorable properties; in particular, it was proven in \cite{van1983eigenstructure} that they yield computed solutions of PEPs and CPEs which are exactly those of slightly perturbed matrix polynomials (i.e., from the polynomial point of view they have perfect structured backward stability). However, it is well known that the Frobenius companion forms do not preserve the algebraic structures that are often present in the matrix polynomials arising in applications. Therefore, the rounding errors inherent to numerical computations may destroy qualitative properties of the eigenstructures of such polynomials when they are computed via the Frobenius forms. In addition, it is also known that Frobenius forms do not deliver accurate solutions of PEPs when the matrix coefficients of the polynomial have very different norms; this problem has to date only been addressed in the quadratic case \cite{tisseur-quadeig,su-quadeig}.
These drawbacks have motivated an intense activity in the last few years towards the development and analysis of new classes of linearizations of matrix polynomials, with special emphasis on linearizations that preserve certain structures important in applications (see, as a small sample,  \cite{amiraslani-corless-lancaster,p832,bini-robol,Bueno_structuredstrong,bueno-ter-dop-general,bueno2015large,Bueno_palindromiclinearizations,
DDM2010Fiedler,DeTeran:2011,DDM2012rectangular,q077,Tisseur06symmetriclinearizations,
Mackey_structuredpolynomial,DLP,nakat-nofer-townsed,ChebyFiedler,greeks2011}).

A key open problem in this area is that global backward error analyses of PEPs and CPEs solved by the new classes of linearizations have not yet been developed, and, so, it is not known if their use combined with the QZ or the staircase algorithm is backward stable from the polynomial point of view. The only backward error analyses available in this context are the ``local'' residual analyses valid for each particular computed eigenpair in the case of the linearizations in vector spaces \cite{Higham06backwarderror,Higham05theconditioning,tisseurlaa2000}, and a few first order global backward error analyses valid for particular ``colleague'' linearizations \cite{lawrence-corless-2015,LVBVD,Chebyrootfinding} or for the Frobenius linearizations \cite{van1983eigenstructure}. Two obstacles for extending these global backward error analyses to other classes of linearizations are that these analyses are very particular, since they make use of the highly specific structures of the considered linearizations, and that the new classes of linearizations are very restricted in the sense that they are highly structured and, so, are not robust under the unstructured perturbations coming from the backward errors of the algorithms. Thus, it is not clear if they are still linearizations of some matrix polynomial when they are perturbed, and even less of what polynomial they could be linearizations.

In order to overcome these obstacles, we introduce in this paper two new families of strong linearizations of general matrix polynomials---square or rectangular, regular or singular---whose minimal indices are related to those of the matrix polynomial via constant uniform shifts. We call these families the {\em strong block minimal bases pencils}, and a subfamily of it the {\em block Kronecker pencils}. Strong block minimal bases pencils are defined in an abstract way in terms of the classical concept of {\em dual minimal bases} \cite{doi:10.1137/0313029}. This allows us to prove that they are always strong linearizations of easily described matrix polynomials in a straightforward and general way and that simple relationships exist between their minimal indices and those of the matrix polynomial. These properties are inherited by the block Kronecker pencils, which include---modulo permutations---all of the Fiedler and proper generalized Fiedler pencils as very particular cases (see the extended version of this paper \cite[Section 4]{Dopico:2016:BlockKronecker} and \cite{budopereu-2016}), and which have the property of being easily constructed in terms of the polynomial coefficients.

Strong block minimal bases pencils have, in practice, only one structural feature, that is the presence of a zero block, since the other ingredients of their definition are polynomial minimal bases and ``generically'' all matrix polynomials of proper sizes are minimal bases \cite{vddop-robust-2016}. So, the class of strong block minimal bases pencils is robust under perturbations that preserve that zero block and, in addition, it is easy to describe the matrix polynomials of which they are linearizations. These properties enable us to perform a global backward error analysis of PEPs and CPEs solved via block Kronecker pencils, because arbitrary perturbations of these pencils lead, after some manipulations, to other strong block minimal bases pencils with similar properties. This error analysis has the following novel properties: (1) it is valid for perturbations with finite norms, in contrast to previous analyses which are valid only to first order; (2) it delivers precise bounds, in contrast to other analyses which only provide vague big-O bounds; (3) it is valid simultaneously for a very large class of linearizations, in contrast to other analyses that are specific for particular linearizations; and (4) it may be generalized to other families of strong block minimal bases pencils. As a corollary, this analysis solves the open problem of proving that all Fiedler and proper generalized Fiedler pencils yield computed complete eigenstructures of matrix polynomials that enjoy perfect structured backward stability from the polynomial point of view.

We emphasize that this backward error analysis does not imply that the eigenvalues and/or minimal indices of the matrix polynomial are accurately computed, since they are intrinsically ill-conditioned, or even ill-posed, when the eigenvalues are close to be multiple or the minimal indices are not generic \cite{edelelmkags1,edelelmkags2}. However, note that our results guarantee that if a backward stable stratification-enhanced staircase algorithm \cite{edelelmkags2} is used on a block Kronecker pencil, then, although the computed complete eigenstructure may be quite different from the exact one, it always corresponds (after a fixed constant shift of the minimal indices) to the exact complete eigenstructure of a nearby matrix polynomial.

The paper is organized as follows. Section \ref{sec:basic} presents a summary of basic concepts. In Section \ref{sec:minlinearizations}, the strong block minimal bases pencils are introduced and their properties are established. Section  \ref{sec:linearization} gives the definition of block Kronecker pencils and studies their properties. The global backward error analysis of complete polynomial eigenproblems solved by means of block Kronecker pencils is the subject of Section \ref{sec:expansion}. Some conclusions and lines of future research are discussed in Section \ref{sec:conclusions}. Finally, the Appendices present long technical proofs of some results needed in the paper. For brevity, this paper does not contain recovery procedures of eigenvectors and minimal bases of a matrix polynomial from those of its strong block minimal bases pencils or of its block Kronecker pencils. These results can be found in \cite[Section 7]{Dopico:2016:BlockKronecker}.

\section{Basic concepts, auxiliary results, and notation}\label{sec:basic}
Throughout the paper we use the following notation.
Given an arbitrary field $\mathbb{F}$, we denote by $\FF[\lambda]$ the ring of polynomials in the variable $\lambda$ with coefficients in $\FF$ and by $\FF(\lambda)$ the field of rational functions with coefficients in $\FF$.
The set of $m\times n$ matrices with entries in $\FF[\lambda]$ is denoted by $\mathbb{F}[\lambda]^{m\times n}$ and is also called the set of $m\times n$ {\em matrix polynomials}. In this context, row or column \emph{vector polynomials} are just matrix polynomials with $m=1$ or $n=1$. $\FF(\lambda)^{m\times n}$ denotes the set of $m\times n$ rational matrices. Given two matrices $A$ and $B$, $A\oplus B$ denotes their direct sum, i.e., $A\oplus B = \mbox{diag}(A,B)$, and $A\otimes B$ denotes their Kronecker product \cite{Horn}. The algebraic closure of $\FF$ is denoted by $\overline{\FF}$. The results in Section \ref{sec:expansion} and Subsection \ref{subsec:norms} assume that $\FF = \mathbb{R}$ or $\FF = \mathbb{C}$, while the rest of results remain valid in any field.

A matrix polynomial $P(\lambda)\in \mathbb{F}[\lambda]^{m\times n}$ is said to have \emph{grade} $d$ if it is written as
\begin{equation}\label{eq:polynomial}
P(\lambda) = P_d\lambda^d + \cdots + P_1\lambda+P_0, \quad \mbox{with }P_0,\hdots,P_d\in\FF^{m\times n},
\end{equation}
where any of the coefficient matrices $P_k$, including $P_d$, may be the zero matrix. As usual, the \emph{degree} of $P(\lambda)$, denoted by $\deg(P)$, is the maximum integer $k$ such that $P_k$ is a nonzero matrix. Thus, the degree of $P(\la)$ is fixed while its grade $d$ is a choice that must satisfy $d \geq \deg(P)$. The concept of grade has been used previously in \cite{de2014spectral,MMMMMoebius} and is convenient when the degree of a polynomial is not known in advance. Throughout this paper when the grade of $P(\la)$ is not explicitly stated, we consider its grade equal to its degree.
A matrix polynomial of grade $1$ is called a \emph{matrix pencil}.

For any $d\geq \deg(P)$ the \emph{$d$-reversal  matrix polynomial} of $P(\lambda)$ is defined as
\[
\rev_d P(\lambda) := \lambda^d P(\lambda^{-1}).
\]
Observe that if $P(\la)$ is assumed to have grade $d$, then it is assumed that $\rev_d P(\lambda)$ has also grade $d$, but that the degree of $\rev_d P(\lambda)$ may be different than the degree of $P(\la)$, even in the case $d = \deg(P)$.

We define the \emph{rank} of a matrix polynomial $P(\la) \in \FF[\la]^{m \times n}$ as its rank over the field $\FF(\la)$, i.e., as the size of the largest non-identically zero minor of $P(\lambda)$ \cite{gantmacher1960theory} and is denoted by $\rank(P)$. This is also called the ``normal rank'' of $P(\la)$, but we avoid to use this name for brevity. Note that expressions such as $\rank(P (\la_0))$ denote the rank of the constant matrix $P (\la_0) \in \overline{\FF}^{m \times n}$, i.e., of the polynomial evaluated at $\lambda_0 \in \overline{\FF}$. We will say that $P (\la_0)$ has full row (resp. column) rank if $\rank P (\la_0) = m$ (resp. $\rank P (\la_0) = n$). Observe that if the constant matrix $P (\la_0)$ has full row (resp. column) rank, then also the matrix polynomial $P (\la)$ has full row (resp. column) rank.

A key distinction for matrix polynomials is between regular and singular matrix polynomials. A matrix polynomial $P(\lambda)$ is said to be \emph{regular} if $P(\lambda)$ is square (that is, $m = n$) and $\det P(\lambda)$ is not the identically zero polynomial. Otherwise, $P(\lambda)$ is said to be \emph{singular} (note that this includes all rectangular matrix polynomials $m\neq n$). We refer the reader to \cite[Section 2]{de2014spectral} for the precise definitions of the spectral and the singular structures of a matrix polynomial, as well as for other related concepts that are used in this paper. In addition, as in \cite{FFP2015}, the term {\em complete eigenstructure} of $P(\la)$ stands for the collection of all of the elementary divisors of $P(\la)$, both finite and infinite, and for the collection of all of its minimal indices, both left and right, i.e., for the union of the spectral and singular structures of $P(\la)$. In the next paragraph, we explain in detail the concepts of minimal bases and minimal indices, as they play an essential role in this paper.

If a matrix polynomial $P(\lambda)\in\FF[\lambda]^{m\times n}$ is singular, then it has non-trivial left and/or right {\em rational} null spaces:
\begin{equation} \label{eq:nullspaces}
\begin{split}
\mathcal{N}_\ell(P) & := \{y(\lambda)^T\in\mathbb{F}(\lambda)^{1\times m} \quad \mbox{such that} \quad y(\lambda)^TP(\lambda) = 0\},\\
\mathcal{N}_r(P) & := \{x(\lambda)\in\mathbb{F}(\lambda)^{n \times 1} \quad \mbox{such that} \quad P(\lambda)x(\lambda) = 0\}.
\end{split}
\end{equation}
These null spaces are particular examples of {\em rational} subspaces, i.e., subspaces over the field $\FF (\la)$ formed by $p$-tuplas whose entries are rational functions \cite{doi:10.1137/0313029}. It is not difficult to show that any rational subspace $\mathcal{V}$ has bases consisting entirely of vector polynomials. The \emph{order} of a vector polynomial basis of $\mathcal{V}$ is defined as the sum of the degrees of its vectors  \cite[Definition 2]{doi:10.1137/0313029}. Amongst all of the possible polynomial bases of $\mathcal{V}$, those with least order are called \emph{minimal bases} of $\mathcal{V}$ \cite[Definition 3]{doi:10.1137/0313029}. There are infinitely many minimal bases of $\mathcal{V}$, but the ordered list of degrees of the vector polynomials in any minimal basis of $\mathcal{V}$ is always the same \cite[Remark 4, p. 497]{doi:10.1137/0313029}. This list of degrees is called the list of {\em minimal indices} of $\mathcal{V}$. With these definitions at hand, the left (resp. right) minimal indices and bases of a matrix polynomial $P(\la)$ are defined as those of the rational subspace $\mathcal{N}_\ell(P)$ (resp. $\mathcal{N}_r(P)$).

The following definitions are useful when working with minimal bases in practice. The \emph{$i$th row degree} of a matrix polynomial $Q(\lambda)$ is the degree of the $i$th row of $Q(\lambda)$.

\begin{definition} \label{def:rowreduced}
Let $Q(\lambda)\in\mathbb{F}[\lambda]^{m\times n}$ be a matrix polynomial with row degrees $d_1,d_2,\hdots,d_m$. The {\em highest row degree coefficient matrix} of $Q(\lambda)$, denoted by $Q_h$, is the $m \times n$ constant matrix whose $j$th row is the coefficient of $\lambda^{d_j}$ in the $j$th row of $Q(\lambda)$, for $j=1,2,\hdots,m$. The matrix polynomial $Q(\lambda)$ is called {\em row reduced} if $Q_h$ has full row rank.
\end{definition}

Observe that $Q_h$ is equal to the leading coefficient $Q_d \ne 0$ in the expansion $Q(\la) = \sum_{i=0}^d Q_i \la^i$ if and only if all the row degrees of $Q(\la)$ are equal to $d$.

Theorem \ref{thm:minimal_basis} is the most useful characterization of minimal bases in practice. This classical result was proved in \cite[Main Theorem-2, p. 495]{doi:10.1137/0313029}, where is stated in abstract terms. The statement we present can be found in  \cite[Theorem 2.14]{FFP2015}.

\begin{theorem}
\label{thm:minimal_basis}
The rows of a matrix polynomial $Q(\lambda)\in\FF[\lambda]^{m\times n}$ are a minimal basis of the rational subspace they span if and only if $Q(\lambda_0) \in \overline{\FF}^{m \times n}$ has full row rank for all $\lambda_0 \in \overline{\FF}$ and $Q(\la)$ is row reduced.
\end{theorem}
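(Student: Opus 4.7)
The theorem is the classical characterization of minimal bases due to Forney, so the plan is to give the standard two-direction proof and identify what really needs to be shown. The central auxiliary tool is the \emph{predictable degree property}: if $Q(\lambda)$ is row reduced with row degrees $d_1,\ldots,d_m$, then for every polynomial row vector $f(\lambda)^T = (f_1(\lambda),\ldots,f_m(\lambda))$,
\[
\deg(f(\lambda)^T Q(\lambda)) = \max_{i}(\deg f_i + d_i).
\]
This is immediate by inspecting the coefficient of $\lambda^{\max_i(\deg f_i + d_i)}$ in $f^T Q$: that coefficient is a linear combination of rows of the highest row degree coefficient matrix $Q_h$, and if it were zero the full row rank of $Q_h$ would force all the participating leading coefficients of the $f_i$ to vanish, a contradiction.

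For the ($\Leftarrow$) direction, assume $Q(\lambda_0)$ has full row rank for every $\lambda_0\in\overline{\FF}$ and $Q(\lambda)$ is row reduced. The pointwise full-rank hypothesis makes the Smith form of $Q(\lambda)$ equal to $[\,I_m\ \,0\,]$, so $Q(\lambda)$ embeds into an $n\times n$ unimodular matrix; hence any polynomial row vector lying in the rational row span of $Q$ is already an $\FF[\lambda]$-linear combination of the rows of $Q$. Consequently, if $\widetilde{Q}(\lambda)$ is any other polynomial basis of the same rational subspace, then $\widetilde{Q} = M\,Q$ for some unimodular $M(\lambda)\in\FF[\lambda]^{m\times m}$. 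Apply the predictable degree property row by row: the $i$th row degree of $\widetilde{Q}$ equals $\max_j(\deg M_{ij} + d_j)$. Since $\det M$ is a nonzero constant, the Leibniz expansion forces $\sum_{i}\deg M_{i\sigma(i)}\ge 0$ for some permutation $\sigma$, and combined with the maxima one obtains $\sum_i \widetilde{d}_i \ge \sum_i d_i$, which is precisely minimality of $Q$.

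For the ($\Rightarrow$) direction, assume $Q$ is a minimal basis and argue by contradiction on each failure separately. If $Q$ were \emph{not row reduced}, then $Q_h$ would have a nontrivial left null vector $c\in\FF^m$; picking $i_0$ with $c_{i_0}\neq 0$ and $d_{i_0}=\max\{d_i:c_i\neq 0\}$, the elementary row operation that replaces row $i_0$ by $\sum_i c_i\lambda^{d_{i_0}-d_i}Q_i(\lambda)$ is unimodular and strictly reduces $d_{i_0}$, hence reduces the total order, contradicting minimality. If $Q(\lambda_0)$ were to \emph{drop row rank} at some $\lambda_0\in\overline{\FF}$, then with $p(\lambda)\in\FF[\lambda]$ the minimal polynomial of $\lambda_0$, the matrix $Q(\lambda)\bmod p(\lambda)$ has nontrivial left kernel over $\FF[\lambda]/(p)$. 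Lifting a kernel vector to $c(\lambda)\in\FF[\lambda]^m$ with $\deg c_i<\deg p$ yields $c(\lambda)^T Q(\lambda) = p(\lambda)\,g(\lambda)^T$ for some polynomial $g$, and using this identity to replace one of the rows of $Q$ (choosing again the index with the largest $\deg c_i + d_i$) produces a basis of strictly smaller order, contradicting minimality.

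The routine parts are the direct verification of the predictable degree property and the row-reducedness case of the converse. The main obstacle is the degree-bookkeeping in the $(\Leftarrow)$ direction, specifically the step that concludes $\sum_i\widetilde{d}_i\ge\sum_i d_i$ from the unimodularity of $M$: one must combine the row-wise maxima provided by the predictable degree property with the constraint that $\det M\in\FF^\times$ imposes on the degree pattern of the entries $M_{ij}$, which is where the two hypotheses really interact.
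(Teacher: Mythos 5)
The paper does not actually prove this statement: it is quoted as Forney's classical characterization, with the proof deferred to \cite[Main Theorem-2]{doi:10.1137/0313029} and \cite[Theorem 2.14]{FFP2015}. So there is no in-paper argument to compare against; your proposal has to stand on its own, and it essentially does. It is the standard Forney/Kailath proof: the predictable degree property plus the unimodular-embedding consequence of pointwise full rank for sufficiency, and order-reducing row operations (one for a left null vector of $Q_h$, one for a rank drop at $\lambda_0$, handled over $\FF[\lambda]/(p)$ to stay in the base field) for necessity. Both contradiction arguments in the forward direction are correctly set up, including the choice of the pivot index $i_0$ that guarantees a strict drop in order.

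One assertion in the $(\Leftarrow)$ direction is wrong as stated, though harmlessly so: the change-of-basis matrix $M$ with $\widetilde{Q}=MQ$ need \emph{not} be unimodular. The full-rank-everywhere hypothesis on $Q$ gives you that $M$ is polynomial, but nothing forces $M^{-1}$ to be polynomial unless $\widetilde{Q}$ also has full row rank at every $\lambda_0$ (take $Q=[1\;\,0]$ and $\widetilde{Q}=[\lambda\;\,0]$, where $M=[\lambda]$). Fortunately your degree bookkeeping never uses unimodularity: all you need is that $\det M$ is a \emph{nonzero polynomial}, so that some permutation $\sigma$ has $\prod_i M_{i\sigma(i)}\neq 0$ and hence $\sum_i \deg M_{i\sigma(i)}\geq 0$; combined with $\widetilde{d}_i=\max_j(\deg M_{ij}+d_j)\geq \deg M_{i\sigma(i)}+d_{\sigma(i)}$ this already yields $\sum_i\widetilde{d}_i\geq\sum_j d_j$. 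Replace ``$\det M$ is a nonzero constant'' by ``$\det M$ is a nonzero polynomial, since $M$ is a square polynomial matrix relating two bases'' and the argument is complete.
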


\begin{remark} {\rm Most of the minimal bases appearing in this work are arranged as the rows of a matrix. Therefore, throughout the paper---and with a slight abuse of notation---we say that an $m\times n$ matrix polynomial (with $m < n$) is a minimal basis if its rows form a minimal basis of the rational subspace they span.

Definition \ref{def:rowreduced} and Theorem \ref{thm:minimal_basis} admit obvious extensions ``for columns'', which are used occasionally in this paper.
}
\end{remark}

\medskip

Corollary \ref{cor:minbasesKron} is a consequence of Theorem \ref{thm:minimal_basis} and the property $\rank (A\otimes B) = \rank (A) \, \rank (B)$ \cite[Theorem 4.2.15]{Horn}. The simple proof is omitted.

\begin{corollary} \label{cor:minbasesKron} If a matrix polynomial $Q(\lambda)$ is a minimal basis and $I_p$ is the $p\times p$ identity matrix, then $Q(\lambda) \otimes I_p$ is also a minimal basis.
\end{corollary}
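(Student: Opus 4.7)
The plan is to verify the two conditions in Theorem \ref{thm:minimal_basis} for the matrix polynomial $Q(\lambda) \otimes I_p$: namely, that $(Q \otimes I_p)(\lambda_0)$ has full row rank at every $\lambda_0 \in \overline{\FF}$, and that $Q(\lambda) \otimes I_p$ is row reduced. Both will follow almost immediately from the hypotheses on $Q(\lambda)$ together with the multiplicativity of rank under Kronecker products.

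First I would address the pointwise full-row-rank condition. Since evaluation commutes with the Kronecker product, $(Q(\lambda) \otimes I_p)\big|_{\lambda=\lambda_0} = Q(\lambda_0) \otimes I_p$. By Theorem \ref{thm:minimal_basis} applied to $Q(\lambda)$, the constant matrix $Q(\lambda_0) \in \overline{\FF}^{m\times n}$ has full row rank $m$ for every $\lambda_0 \in \overline{\FF}$, and using $\rank(A\otimes B) = \rank(A)\,\rank(B)$ we get $\rank(Q(\lambda_0)\otimes I_p) = m \cdot p$, which equals the number of rows of $Q(\lambda)\otimes I_p$.

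Next I would check row reducedness. If the row degrees of $Q(\lambda)$ are $d_1,\ldots,d_m$, then, writing $Q(\lambda) \otimes I_p$ in its natural block form where the $(i,j)$ block is $Q_{ij}(\lambda) I_p$, each of the $p$ rows within the $i$th block row inherits the degree $d_i$. Consequently, the highest row degree coefficient matrix of $Q(\lambda)\otimes I_p$ is precisely $Q_h \otimes I_p$, where $Q_h$ is the highest row degree coefficient matrix of $Q(\lambda)$. Because $Q(\lambda)$ is row reduced, $Q_h$ has full row rank $m$, and invoking the Kronecker rank formula once more yields $\rank(Q_h\otimes I_p) = mp$, so $Q(\lambda)\otimes I_p$ is row reduced. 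An application of Theorem \ref{thm:minimal_basis} in the reverse direction then concludes the proof.

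There is no real obstacle here: the only point that requires a small amount of care is the bookkeeping that the highest row degree coefficient matrix of the Kronecker product is the Kronecker product of the highest row degree coefficient matrices, which is where the block structure of $Q(\lambda)\otimes I_p$ has to be used explicitly.
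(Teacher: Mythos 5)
Your proof is correct and follows exactly the route the paper indicates (the paper omits the proof but states that the corollary is a consequence of Theorem \ref{thm:minimal_basis} together with $\rank(A\otimes B)=\rank(A)\,\rank(B)$): you verify both conditions of Theorem \ref{thm:minimal_basis} for $Q(\lambda)\otimes I_p$ via the Kronecker rank formula, and the identification of the highest row degree coefficient matrix as $Q_h\otimes I_p$ is the right bookkeeping step. Nothing is missing.
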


The concept of \emph{dual minimal bases} is fundamental in this paper and is introduced in Definition \ref{def:dualminimalbases}.
\begin{definition} \label{def:dualminimalbases}
Two matrix polynomials $L(\lambda)\in\FF[\lambda]^{m_1\times n}$ and $N(\lambda)\in\FF[\lambda]^{m_2\times n}$ are called \emph{dual minimal bases} if $L(\lambda)$ and $N(\lambda)$ are both minimal bases and they satisfy
$m_1+m_2 = n$ and $L(\lambda)N(\lambda)^T = 0$.

\end{definition}
The name ``dual minimal bases'' and its definition were introduced in \cite[Definition 2.10]{DDMVzigzag}, but their origins can be traced back to \cite{doi:10.1137/0313029}. We also use the expression ``$N(\la)$ is a minimal basis dual to $L(\la)$'', or vice versa, for referring to matrix polynomials $L(\la)$ and $N(\la)$ as those in Definition \ref{def:dualminimalbases}.

\begin{example} \label{ex-L-Lamb} {\rm We illustrate the concept of dual minimal bases with a simple example that is important in this paper. Consider the following matrix polynomials:
\begin{equation}
\label{eq:Lk}
L_k(\lambda):=\begin{bmatrix}
-1 & \lambda  \\
& -1 & \lambda \\
& & \ddots & \ddots \\
& & & -1 & \lambda  \\
\end{bmatrix}\in\mathbb{F}[\lambda]^{k\times(k+1)},
\end{equation}
and
\begin{equation}
  \label{eq:Lambda}
  \Lambda_k(\lambda)^T :=
\begin{bmatrix}
      \lambda^{k} & \cdots & \lambda & 1
\end{bmatrix} \in \FF[\lambda]^{1\times (k+1)},
\end{equation}
where here and throughout the paper we occasionally omit some, or all, of the zero entries of a matrix. Theorem \ref{thm:minimal_basis} guarantees that $L_k(\lambda)$ and $\Lambda_k(\lambda)^T$ are minimal bases. In addition, $L_k(\lambda)\Lambda_k(\lambda)=0$ holds. Therefore, $L_k(\lambda)$ and $\Lambda_k(\lambda)^T$ are dual minimal bases. From Corollary \ref{cor:minbasesKron} and the properties of the Kronecker product we get that $L_k(\lambda) \otimes I_p$ and $\Lambda_k(\lambda)^T \otimes I_p$ are also dual minimal bases.

The matrix $L_k (\la)$ is very well known since is a right singular block of the Kronecker Canonical Form of pencils \cite[Chapter XII]{gantmacher1960theory}. Also the {\em column} vector polynomial $\Lambda_k(\lambda)$ is very well known and plays an essential role, for instance, in the famous vector spaces of linearizations studied in \cite{Tisseur06symmetriclinearizations,DLP}.
}
\end{example}

\medskip

Theorem  \ref{thm:minbasesdegreeseq} establishes properties of minimal bases whose row degrees are all equal. These are the minimal bases of interest in this work. The proof of Theorem \ref{thm:minbasesdegreeseq} is omitted since follows from results on row-wise reversals of minimal bases \cite{DDMsingular,MMMMMoebius}. For a simpler proof based on Theorem \ref{thm:minimal_basis}, see the extended version of this paper \cite{Dopico:2016:BlockKronecker}.

\begin{theorem} \label{thm:minbasesdegreeseq}
\begin{enumerate}
\item[\rm (a)] Let $K(\la)$ be a minimal basis whose row degrees are all equal to $j$. Then $\rev_j K(\la)$ is also a minimal basis whose row degrees are all equal to $j$.
\item[\rm (b)] Let $K(\la)$ and $N(\la)$ be dual minimal bases. If the row degrees of $K(\la)$ are all equal to $j$ and the row degrees of $N(\la)$ are all equal to $\ell$, then $\rev_j K(\la)$ and $\rev_\ell N(\la)$ are also dual minimal bases.
\end{enumerate}
\end{theorem}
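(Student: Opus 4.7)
The plan is to prove both parts by directly invoking the characterization of minimal bases given in Theorem \ref{thm:minimal_basis} (full pointwise row rank over $\overline{\FF}$, plus row-reducedness), avoiding any appeal to the general row-wise reversal results.

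For part (a), I would write $K(\lambda)=\sum_{i=0}^{j}K_{i}\lambda^{i}$, so that $\rev_{j}K(\lambda)=\sum_{i=0}^{j}K_{j-i}\lambda^{i}$. Since all row degrees of $K(\lambda)$ equal $j$, its highest row degree coefficient matrix is exactly the leading coefficient $K_{j}$, and row-reducedness of $K(\lambda)$ therefore means $K_{j}$ has full row rank. Evaluating the minimality of $K(\lambda)$ at $\lambda_{0}=0$ gives that $K_{0}=K(0)$ also has full row rank; in particular no row of $K_{0}$ is zero, so every row of $\rev_{j}K(\lambda)$ genuinely has degree $j$ (this is the one place one must be a bit careful: ``row degree equal to $j$'' is the condition that the $\lambda^{j}$-coefficient of that row is nonzero, and it is what will later match with Definition \ref{def:dualminimalbases}). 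The highest row degree coefficient matrix of $\rev_{j}K(\lambda)$ is then $K_{0}$, which has full row rank, so $\rev_{j}K(\lambda)$ is row reduced. Finally, for the pointwise rank condition, I split into $\lambda_{0}\ne 0$ and $\lambda_{0}=0$: in the first case $(\rev_{j}K)(\lambda_{0})=\lambda_{0}^{\,j}K(\lambda_{0}^{-1})$ has full row rank because $K(\lambda_{0}^{-1})$ does and $\lambda_{0}^{\,j}\ne 0$; in the second $(\rev_{j}K)(0)=K_{j}$, which has full row rank by row-reducedness of $K(\lambda)$. Theorem \ref{thm:minimal_basis} then yields that $\rev_{j}K(\lambda)$ is a minimal basis.

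For part (b), the key observation is the elementary polynomial identity
\[
\rev_{j}K(\lambda)\,\bigl(\rev_{\ell}N(\lambda)\bigr)^{T}
=\bigl(\lambda^{j}K(\lambda^{-1})\bigr)\bigl(\lambda^{\ell}N(\lambda^{-1})^{T}\bigr)
=\lambda^{\,j+\ell}\,K(\lambda^{-1})N(\lambda^{-1})^{T}.
\]
Because $K(\mu)N(\mu)^{T}=0$ holds as a polynomial identity in $\mu$, substituting $\mu=\lambda^{-1}$ and clearing denominators by multiplying through by $\lambda^{\,j+\ell}$ shows that the right-hand side is the zero matrix polynomial. Hence $\rev_{j}K(\lambda)\,(\rev_{\ell}N(\lambda))^{T}=0$. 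By part (a), both $\rev_{j}K(\lambda)$ and $\rev_{\ell}N(\lambda)$ are minimal bases, and their number of rows still sum to the common number of columns $n$, so Definition \ref{def:dualminimalbases} applies and they are dual minimal bases.

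I do not expect any real obstacle: the only point requiring care is verifying that the row degrees of the reversal are still all equal to $j$, which is precisely where I use that minimality forces $K(0)=K_{0}$ to have full row rank and therefore no zero row. Everything else is bookkeeping of coefficients and a clean reuse of Theorem \ref{thm:minimal_basis}.
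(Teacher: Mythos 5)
Your proof is correct. The paper itself omits the argument, citing general results on row-wise reversals of minimal bases, but explicitly remarks that a ``simpler proof based on Theorem \ref{thm:minimal_basis}'' exists in the extended version; your argument is exactly that direct verification. All the delicate points are handled properly: you use row-reducedness to get that $K_j$ has full row rank, the pointwise rank condition at $\lambda_0=0$ to get that $K_0$ has full row rank (hence no zero rows, so every row of $\rev_j K$ genuinely has degree $j$ and its highest row degree coefficient matrix is $K_0$), and the substitution $\mu=\lambda^{-1}$ together with clearing denominators for the duality identity in part (b).
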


\begin{example} \label{ex-rev-L-Lamb} {\rm Theorem \ref{thm:minbasesdegreeseq}(b) can be applied to the dual minimal bases $L_k (\la)$ and $\Lambda_k (\la)^T$ in Example \ref{ex-L-Lamb} to prove that
\[
 \rev_1 L_k(\lambda) =
\begin{bmatrix}
-\lambda & 1  \\
& -\lambda & 1 \\
& & \ddots & \ddots \\
& & & -\lambda & 1  \\
\end{bmatrix}\in\mathbb{F}[\lambda]^{k\times(k+1)}
 \]
and
\[
\rev_k \Lambda_k (\lambda)^T =
\begin{bmatrix}
1 & \lambda & \cdots & \lambda^k
\end{bmatrix}\in\mathbb{F}[\lambda]^{1\times(k+1)}
\]
are also dual minimal bases. This fact follows also directly from Theorem \ref{thm:minimal_basis} and matrix multiplication.
}
\end{example}

\medskip

Lemma \ref{lemma:Embedding} states that any matrix polynomial $Q(\la)$ such that $Q(\la_0)$ has full row rank for all $\lambda_0\in \overline{\FF}$ can be completed into a {\em unimodular matrix polynomial}, i.e., a matrix polynomial with nonzero constant determinant. This is an old result that can be traced back at least to \cite{kailath1980linear} (a very simple proof appears in \cite[Lemma 2.16(b)]{FFP2015}). Efficient algorithms for computing such completions can be found in \cite{Embedding}.

\begin{lemma}\label{lemma:Embedding}
Let $Q(\lambda)$ be a matrix polynomial over a field $\FF$. If $Q(\la_0)$ has full row rank for all $\lambda_0\in \overline{\FF}$, then there exists a matrix polynomial $\widetilde{Q}(\lambda)$ such that
\[
\widehat{Q}(\lambda) = \begin{bmatrix}
Q(\lambda) \\   \widetilde{Q}(\lambda)
\end{bmatrix}
\]
is unimodular.
\end{lemma}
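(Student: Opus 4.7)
The plan is to derive the completion directly from the Smith canonical form of $Q(\lambda)$. First I would argue that the hypothesis that $Q(\lambda_0)$ has full row rank for every $\lambda_0 \in \overline{\FF}$ forces every invariant factor of $Q(\lambda)$ to be a unit in $\FF[\lambda]$, i.e., a nonzero constant that can be normalized to $1$. Indeed, if $s_1 \mid s_2 \mid \cdots \mid s_m$ are the invariant factors of $Q(\lambda)$, then in any Smith decomposition $Q(\lambda) = U(\lambda)\, [\,\mathrm{diag}(s_1,\ldots,s_m) \;\; 0\,]\, V(\lambda)$ with unimodular $U(\lambda)$ and $V(\lambda)$, the matrix $Q(\lambda_0)$ drops row rank precisely at the roots of $s_1 s_2 \cdots s_m$, because unimodular factors evaluate to invertible constant matrices. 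The pointwise full-rank hypothesis therefore forces each $s_i$ to have no roots in $\overline{\FF}$, hence to be a unit. Consequently, the Smith form of $Q(\lambda)$ is $[\,I_m \;\; 0\,]$, and there exist unimodular $U(\lambda) \in \FF[\lambda]^{m \times m}$ and $V(\lambda) \in \FF[\lambda]^{n \times n}$ with
\[
Q(\lambda) \;=\; U(\lambda)\, [\,I_m \;\; 0\,]\, V(\lambda).
\]

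Next I would construct $\widetilde{Q}(\lambda)$ explicitly by ``borrowing'' the bottom rows of $V(\lambda)$. Partition
\[
V(\lambda) \;=\; \begin{bmatrix} V_1(\lambda) \\ V_2(\lambda) \end{bmatrix}, \qquad V_2(\lambda) \in \FF[\lambda]^{(n-m) \times n},
\]
and set $\widetilde{Q}(\lambda) := V_2(\lambda)$. A direct block computation then gives
\[
\widehat{Q}(\lambda) \;=\; \begin{bmatrix} Q(\lambda) \\ \widetilde{Q}(\lambda) \end{bmatrix} \;=\; \begin{bmatrix} U(\lambda) & 0 \\ 0 & I_{n-m} \end{bmatrix} V(\lambda),
\]
whose determinant equals $\det(U(\lambda))\, \det(V(\lambda))$, a product of two nonzero constants. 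Hence $\widehat{Q}(\lambda)$ is unimodular, which proves the lemma.

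The argument is short once the Smith form is invoked; the only piece of real substance is translating the pointwise rank condition at every $\lambda_0 \in \overline{\FF}$ into the statement that no invariant factor has a root in $\overline{\FF}$. The main obstacle would appear only if one insisted on avoiding the Smith form and giving a direct constructive proof (as the algorithms cited in the statement do): then one would have to append rows to $\widetilde{Q}(\lambda)$ one at a time while maintaining full rank of the enlarged matrix at every $\lambda_0 \in \overline{\FF}$, which is a genuinely nontrivial inductive procedure. For the mere existence claim asserted here, however, the Smith-form route is by far the cleanest.
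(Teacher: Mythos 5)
Your proof is correct. The paper does not include its own proof of this lemma---it cites \cite[Lemma 2.16(b)]{FFP2015} for a ``very simple proof''---and the argument given there is essentially the one you use: the pointwise full-row-rank hypothesis forces the Smith form of $Q(\lambda)$ to be $[\,I_m \;\; 0\,]$, and the completion is read off from the bottom rows of the unimodular right factor. So your proposal matches the standard route, and all the steps (unimodular factors evaluating to invertible constant matrices, invariant factors with no roots in $\overline{\FF}$ being units, the block factorization of $\widehat{Q}(\lambda)$) are sound.
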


Lemma \ref{lemma:Embedding} can be applied, in particular, when $Q(\la)$ is a minimal basis, as a consequence of Theorem \ref{thm:minimal_basis}. Moreover, Lemma \ref{lemma:Embedding} can be extended to Theorem \ref{thm:Embedding}, which is one of the main tools employed in Section \ref{sec:minlinearizations}. Observe that Theorem \ref{thm:Embedding} can be applied, in particular, when $L(\la)$ and $N(\la)$ are dual minimal bases.

\begin{theorem}\label{thm:Embedding}
Let $L(\lambda)\in \FF[\la]^{m_1 \times n}$ and $N(\lambda)\in \FF[\la]^{m_2 \times n}$ be matrix polynomials such that $m_1 + m_2 = n$, $L(\la_0)$ and $N(\la_0)$ have both full row rank for all $\lambda_0\in \overline{\FF}$, and $L(\la) N(\la)^T = 0$. Then, there exists a unimodular matrix polynomial $U(\lambda)\in \FF[\la]^{n \times n}$ such that
\[
U(\lambda) =
\begin{bmatrix}
L(\lambda) \\ \widehat{L}(\lambda)
\end{bmatrix}
\quad \mbox{and} \quad
U(\lambda)^{-1}=
\begin{bmatrix}
\widehat{N}(\lambda)^T & N(\lambda)^T
\end{bmatrix}.
\]
\end{theorem}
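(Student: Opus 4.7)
My approach is to first apply Lemma \ref{lemma:Embedding} to get \emph{some} unimodular completion of $L(\la)$, then correct it so that the right half of the inverse coincides exactly with $N(\la)^T$ rather than merely spanning the same right null space over $\FF(\la)$.

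\textbf{Step 1: initial completion.} Apply Lemma \ref{lemma:Embedding} to $L(\la)$ to produce a matrix polynomial $\widetilde{L}(\la)\in\FF[\la]^{m_2\times n}$ such that
\[
U_0(\la):=\begin{bmatrix} L(\la)\\ \widetilde{L}(\la)\end{bmatrix}
\]
is unimodular. Partition its inverse conformally as $U_0(\la)^{-1}=[X(\la)\ M(\la)]$, where $X(\la)$ has $m_1$ columns and $M(\la)$ has $m_2$ columns. From $U_0(\la)U_0(\la)^{-1}=I_n$ we read off the four identities
\[
L(\la)X(\la)=I_{m_1},\quad L(\la)M(\la)=0,\quad \widetilde{L}(\la)X(\la)=0,\quad \widetilde{L}(\la)M(\la)=I_{m_2}.
\]

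\textbf{Step 2: the matrix $[X(\la)\ N(\la)^T]$ is unimodular.} Consider any $\la_0\in\overline{\FF}$ and any vector $v=\begin{bmatrix}a\\ b\end{bmatrix}$ with $[X(\la_0)\ N(\la_0)^T]v=0$. Multiplying on the left by $L(\la_0)$ and using $L(\la_0)X(\la_0)=I_{m_1}$ together with the hypothesis $L(\la)N(\la)^T=0$ yields $a=0$. Then $N(\la_0)^T b=0$, and since $N(\la_0)$ has full row rank we conclude $b=0$. Thus $[X(\la_0)\ N(\la_0)^T]$ is invertible for every $\la_0\in\overline{\FF}$, so its determinant is a nonzero constant and the matrix is unimodular.

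\textbf{Step 3: the block $V(\la):=\widetilde{L}(\la)N(\la)^T$ is unimodular.} A direct computation using the identities of Step 1 and $L(\la)N(\la)^T=0$ gives
\[
U_0(\la)\,\bigl[X(\la)\ N(\la)^T\bigr]=\begin{bmatrix} I_{m_1} & 0\\ 0 & V(\la)\end{bmatrix}.
\]
Taking determinants, $\det V(\la)=\det U_0(\la)\cdot\det[X(\la)\ N(\la)^T]$, which is a nonzero constant by Step 2 and unimodularity of $U_0(\la)$. Hence $V(\la)^{-1}$ is itself a polynomial matrix.

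\textbf{Step 4: correction and conclusion.} Define
\[
U(\la):=\begin{bmatrix} I_{m_1} & 0\\ 0 & V(\la)^{-1}\end{bmatrix}U_0(\la)=\begin{bmatrix} L(\la)\\ \widehat{L}(\la)\end{bmatrix},\qquad \widehat{L}(\la):=V(\la)^{-1}\widetilde{L}(\la).
\]
This is unimodular (product of two unimodular matrices) and has top block $L(\la)$. Its inverse is
\[
U(\la)^{-1}=U_0(\la)^{-1}\begin{bmatrix} I_{m_1} & 0\\ 0 & V(\la)\end{bmatrix}=\bigl[X(\la)\ M(\la)V(\la)\bigr].
\]
Comparing with the identity of Step 3 (which can be rewritten as $[X(\la)\ N(\la)^T]=U_0(\la)^{-1}\mathrm{diag}(I_{m_1},V(\la))$), we obtain $M(\la)V(\la)=N(\la)^T$, so $U(\la)^{-1}=[\widehat{N}(\la)^T\ N(\la)^T]$ with $\widehat{N}(\la)^T:=X(\la)$, exactly as required.

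\textbf{Expected main obstacle.} The only nontrivial point is realizing that the ``raw'' completion produced by Lemma \ref{lemma:Embedding} will generally not have $N(\la)^T$ as the right block of its inverse, only \emph{some} basis of the right null space of $L(\la)$. The key trick is therefore identifying the correction factor $V(\la)=\widetilde{L}(\la)N(\la)^T$ and proving it is unimodular; the pointwise rank argument of Step 2, which crucially uses the full-row-rank assumption on $N(\la_0)$ (and not merely that $N(\la)$ is a minimal basis), is where all the hypotheses pull their weight.
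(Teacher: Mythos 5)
Your proof is correct. The final construction is in fact the same as the paper's: both take a unimodular completion $\begin{bmatrix} L(\la) \\ \widetilde{L}(\la)\end{bmatrix}$ furnished by Lemma \ref{lemma:Embedding} and left-multiply by $\mathrm{diag}\bigl(I_{m_1},(\widetilde{L}(\la)N(\la)^T)^{-1}\bigr)$. Where you genuinely diverge is in how you certify that the correction factor $V(\la)=\widetilde{L}(\la)N(\la)^T$ is unimodular and in how you identify $U(\la)^{-1}$. The paper invokes Lemma \ref{lemma:Embedding} a second time to complete $N(\la)^T$ into a unimodular $[Z_2(\la)^T\ \ N(\la)^T]$, multiplies the two completions to read off that $L(\la)Z_2(\la)^T$ and $Z_1(\la)N(\la)^T$ are unimodular, and then writes down an explicit candidate inverse involving the auxiliary matrix $X(\la)=(Z_1N^T)^{-1}Z_1Z_2^T(LZ_2^T)^{-1}$ that must be verified to satisfy $U(\la)V(\la)=I_n$. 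You instead avoid the second completion entirely: your pointwise kernel argument in Step 2 (using $L(\la_0)X(\la_0)=I_{m_1}$, $L(\la_0)N(\la_0)^T=0$, and the full row rank of $N(\la_0)$ for every $\la_0\in\overline{\FF}$) shows $[X(\la)\ N(\la)^T]$ is unimodular directly, and the block-diagonal identity $U_0(\la)[X(\la)\ N(\la)^T]=\mathrm{diag}(I_{m_1},V(\la))$ then hands you both the unimodularity of $V(\la)$ and, after the correction, the exact form of $U(\la)^{-1}$ with no further verification. This is a leaner argument: one application of the embedding lemma instead of two, and no explicit Schur-complement-like matrix to check.
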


\begin{proof}
By Lemma \ref{lemma:Embedding}, there exist unimodular embeddings
\[
\begin{bmatrix}
L(\lambda) \\ Z_1(\lambda)
\end{bmatrix}
\quad \mbox{and} \quad
\begin{bmatrix}
Z_2(\lambda)^T & N(\lambda)^T
\end{bmatrix}.
\]
Since the product of two unimodular matrix polynomials is also unimodular, from
\[
\begin{bmatrix}
L(\lambda) \\ Z_1(\lambda)
\end{bmatrix}
\begin{bmatrix}
Z_2(\lambda)^T & N(\lambda)^T
\end{bmatrix} =
\begin{bmatrix}
L(\lambda)Z_2(\lambda)^T & 0 \\
Z_1(\lambda)Z_2(\lambda)^T & Z_1(\la) N(\lambda)^T
\end{bmatrix},
\]
it follows that $L(\lambda)Z_2(\lambda)^T \in \FF[\la]^{m_1 \times m_1}$ and $Z_1(\lambda)N(\lambda)^T \in \FF[\la]^{m_2 \times m_2}$ must also be unimodular matrix polynomials, as well as their inverses. Let us now consider the following unimodular matrix polynomials
\[
U(\lambda) =
\begin{bmatrix}
I_{m_1} & 0 \\ 0 & (Z_1(\lambda)N(\lambda)^T)^{-1}
\end{bmatrix}
\begin{bmatrix}
L(\lambda) \\ Z_1(\lambda)
\end{bmatrix}
\]
and
\[
V(\lambda) =
\begin{bmatrix}
Z_2(\lambda)^T & N(\lambda)^T
\end{bmatrix}
\begin{bmatrix}
(L(\lambda)Z_2(\lambda)^T)^{-1} & 0 \\ 0 & I_{m_2}
\end{bmatrix}
\begin{bmatrix}
I_{m_1} & 0 \\ -X(\lambda) & I_{m_2}
\end{bmatrix},
\]
where $X(\lambda)=(Z_1(\lambda)N(\lambda)^T)^{-1}Z_1(\lambda)
Z_2(\lambda)^T(L(\lambda)Z_2(\lambda)^T)^{-1}$.
The statement of the theorem then follows by verifying that $U(\lambda)V(\lambda)=I_n$.
\end{proof}

\medskip

\begin{example} \label{ex-embeddings} {\rm
We illustrate Theorem \ref{thm:Embedding} with a particular embedding of the dual minimal bases $L_k(\lambda)$ and $\Lambda_k(\lambda)^T$ introduced in Example \ref{ex-L-Lamb}. If $e_{k+1}$ is the last column of $I_{k+1}$, then it is easily verified that
\begin{equation*}\label{eq:Vk}
V_k(\lambda) =
\left[
  \begin{array}{c}
    L_k(\lambda) \\ \hline \! \!\phantom{\Big)}
    e_{k+1}^T
  \end{array}
\right]
=
\left[
  \begin{array}{ccccc}
    -1 & \lambda& & &   \\
    & -1 & \lambda & &  \\
    & & \ddots & \ddots&  \\
    & & & -1 & \lambda \\ \hline
    0 & \cdots  & \cdots  & 0 & 1
  \end{array}
\right]
\in\mathbb{F}[\lambda]^{(k+1)\times(k+1)}
\end{equation*}
is unimodular and that its inverse is
\begin{equation}
\label{eq:invVk}
V_k(\lambda)^{-1} =
\left[\begin{array}{ccccc|c}
 -1 & -\lambda & -\la^2 & \cdots & -\lambda^{k-1}  & \lambda^k \\
  & -1 & -\lambda &  \ddots & \vdots & \lambda^{k-1} \\
  &  & -1 &  \ddots & -\la^2 & \vdots \\
  & & & \ddots & -\la &\lambda^2 \\
  & & & & -1 & \la \\
  & & & & & 1
\end{array}\right]
\in\mathbb{F}[\lambda]^{(k+1)\times(k+1)}.
\end{equation}
Note that the last column of $V_k(\lambda)^{-1}$ is $\Lambda_k(\lambda)$. Therefore, $V_k(\la)$ is a particular instance of a matrix $U(\la)$ in Theorem \ref{thm:Embedding} for $L_k(\lambda)$ and $\Lambda_k(\lambda)^T$. Moreover, $V_k(\la)\otimes I_p$ is a particular instance of $U(\la)$ for the dual minimal bases $L_k(\lambda) \otimes I_p$ and $\Lambda_k(\lambda)^T \otimes I_p$ discussed also in Example \ref{ex-L-Lamb}.
}
\end{example}

\medskip

We now recall the definitions of linearization and strong linearization of a matrix polynomial, which are central in this paper. These definitions were introduced in \cite{GKL:1988:matrixpoly,LancasterBook} for regular matrix polynomials, and extended to the singular case in \cite{de2008sharp}. We refer the reader to \cite{de2014spectral} for a thorough treatment of these concepts and their properties.

\begin{definition}\label{def:linearization}
A matrix pencil $\lpol{\lambda}$ is a \emph{linearization} of a matrix polynomial $P(\lambda)$ of grade $d$ if for some $s \geq 0$ there exist two unimodular matrix polynomials $U(\lambda)$ and $V(\lambda)$ such that
  \begin{equation}\label{eq:linearization_relation}
    U(\lambda)\lpol{\lambda}V(\lambda)=
    \left[
      \begin{array}{cc}
        I_s& \\
        &P(\lambda)
      \end{array}
    \right]\>.
  \end{equation}
Furthermore, a linearization $\lpol{\lambda}$ is called a \emph{strong linearization} of $P(\la)$ if $\rev_1\lpol{\lambda}$ is a linearization of $\rev_d P(\lambda)$.
\end{definition}

The key property of any strong linearization $\mathcal{L}(\lambda)$ of a matrix polynomial $P(\lambda)$ is that $\mathcal{L}(\lambda)$ and $P(\lambda)$ share the same finite and infinite elementary divisors \cite[Theorem 4.1]{de2014spectral}. However, Definition \ref{def:linearization} only guarantees that the number of left (resp. right) minimal indices of $\mathcal{L}(\lambda)$ is equal to the number of left (resp. right) minimal indices of $P(\lambda)$. In fact, except by these constraints on the numbers, $\lpol{\la}$ may have any set of right and left minimal indices \cite[Theorem 4.11]{de2014spectral}. Therefore, in the case of singular matrix polynomials, one needs to consider {\em strong linearizations with the additional property} that their minimal indices allow us to recover the minimal indices of the polynomial via some simple rule. In addition, {\em such rule should be robust under perturbations}, in order to be reliable in numerical computations affected by rounding errors, since minimal indices of matrix polynomials may vary wildly under perturbations \cite{edelelmkags1,edelelmkags2,johkagsvdstrat}. These questions about recovery rules of minimal indices are carefully studied throughout this paper.

Lemma \ref{lemma:antitriglin} is a very simple result that allows us to easily recognize linearizations in certain situations which are of interest in this work.
\begin{lemma} \label{lemma:antitriglin} Let $P(\la)$ be an $m\times n$ matrix polynomial and $\lpol{\lambda}$ be a matrix pencil. If there exist two unimodular matrix polynomials $\widetilde{U}(\lambda)$ and $\widetilde{V}(\lambda)$ such that
\begin{equation} \label{eq:lintriagrelation}
\widetilde{U}(\lambda)\mathcal{L}(\lambda)\widetilde{V}(\lambda) =
\begin{bmatrix}
Z(\lambda) & X(\lambda) & I_{t} \\
Y(\lambda) & P(\lambda) & 0 \\
I_{s} & 0 & 0
\end{bmatrix},
\end{equation}
for some $s \geq 0$ and $t\geq 0$ and for some matrix polynomials $X(\la)$, $Y(\la)$, and $Z(\la)$, then $\lpol{\lambda}$ is a linearization of $P(\la)$.
\end{lemma}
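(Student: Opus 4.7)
The plan is to start from the given unimodular reduction \eqref{eq:lintriagrelation} and perform a sequence of elementary block row and column operations on the right hand side that successively annihilate the off-diagonal blocks $Z(\lambda)$, $Y(\lambda)$ and $X(\lambda)$, using the two identity blocks $I_s$ and $I_t$ as pivots. Each such elementary operation corresponds to left- or right-multiplication by a constant or polynomial matrix of block-triangular form with identity diagonal blocks, which is automatically unimodular. Composing these factors with $\widetilde{U}(\lambda)$ and $\widetilde{V}(\lambda)$ will yield the unimodular matrices $U(\lambda)$ and $V(\lambda)$ required by Definition \ref{def:linearization}.

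In detail, first I would use the block $I_t$ in position $(1,3)$ as pivot: left-multiplying by the unimodular matrix
$\begin{bmatrix} I_t & 0 & -Z(\lambda) \\ 0 & I_m & 0 \\ 0 & 0 & I_s \end{bmatrix}$
replaces the $(1,1)$ block $Z(\lambda)$ by zero, without touching any other entry of the array (since the $(3,2)$ and $(3,3)$ blocks are zero). Next, using $I_s$ in position $(3,1)$ as pivot and left-multiplying by
$\begin{bmatrix} I_t & 0 & 0 \\ 0 & I_m & -Y(\lambda) \\ 0 & 0 & I_s \end{bmatrix}$
kills $Y(\lambda)$ in position $(2,1)$. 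Finally, right-multiplying by
$\begin{bmatrix} I_s & 0 & 0 \\ 0 & I_n & 0 \\ 0 & -X(\lambda) & I_t \end{bmatrix}$
clears $X(\lambda)$ in position $(1,2)$, producing the block anti-diagonal array with $I_t$, $P(\lambda)$, $I_s$ on the anti-diagonal and zero elsewhere.

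To finish I would apply a constant (hence unimodular) block row permutation swapping block rows $1$ and $3$, followed by a block column permutation swapping blocks $2$ and $3$, turning the anti-diagonal form into $\mathrm{diag}(I_s, I_t, P(\lambda)) = \mathrm{diag}(I_{s+t}, P(\lambda))$, which is exactly the right-hand side of \eqref{eq:linearization_relation} with $s$ replaced by $s+t$. Absorbing the three elementary factors and the row-permutation matrix into $\widetilde{U}(\lambda)$ gives the unimodular $U(\lambda)$, and absorbing the elementary column factor and the column-permutation matrix into $\widetilde{V}(\lambda)$ gives $V(\lambda)$.

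I do not anticipate a genuine obstacle: the argument is pure block-matrix bookkeeping and every factor introduced is manifestly unimodular. The only point requiring care is checking block-dimension compatibility (so that $Z(\lambda)$ is $t\times s$, $X(\lambda)$ is $t\times n$, and $Y(\lambda)$ is $m\times s$, making the three elementary block matrices well defined) and verifying that each elimination step really does leave the previously zeroed blocks untouched, which is immediate because the multiplying rows or columns contain only zeros outside the pivot block.
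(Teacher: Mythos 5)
Your argument is correct and is essentially the paper's own proof: the paper simply packages your three block eliminations together with the permutations into two matrices $R(\lambda)$ and $S(\lambda)$ satisfying $R(\lambda)\,\widetilde{U}(\lambda)\mathcal{L}(\lambda)\widetilde{V}(\lambda)\,S(\lambda)=\mathrm{diag}(I_t,I_s,P(\lambda))$. One small bookkeeping slip in your last step: after the eliminations, swapping block rows $1$ and $3$ yields $\mathrm{diag}(I_s,P(\lambda),I_t)$, and then swapping only block \emph{columns} $2$ and $3$ gives $\bigl[\begin{smallmatrix} I_s & 0 & 0\\ 0&0&P\\ 0&I_t&0\end{smallmatrix}\bigr]$ rather than $\mathrm{diag}(I_s,I_t,P(\lambda))$; you need either a cyclic row permutation together with the column swap, or a simultaneous swap of block rows and columns $2,3$, to reach $\mathrm{diag}(I_{s+t},P(\lambda))$ --- a trivially repaired point that does not affect the validity of the approach.
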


\begin{proof} Define the unimodular matrix polynomials
\[
R(\lambda) = \begin{bmatrix}
I_{t} & 0 & -Z(\lambda) \\
0 & 0 & I_{s} \\
0 & I_m & -Y(\lambda)
\end{bmatrix} \>, \quad
S(\lambda) =
\begin{bmatrix}
0 & I_{s} & 0 \\
0 & 0 & I_n \\
I_{t} & 0 & -X(\lambda)
\end{bmatrix}.
\]
Then equation \eqref{eq:lintriagrelation} implies that
$
R(\lambda) \widetilde{U}(\lambda)\mathcal{L}(\lambda)\widetilde{V}(\lambda) S(\lambda) = \mbox{diag} (I_{t},I_{s},P(\la))
$. This proves that $\mathcal{L}(\lambda)$ is a linearization of $P(\la)$.
\end{proof}

\subsection{Norms of matrix polynomials and their submultiplicative properties} \label{subsec:norms} The study of perturbations and backward errors in Section \ref{sec:expansion} requires the use of norms of matrix polynomials. We have chosen the simple norm in Definition \ref{def:norm}. In this section the polynomials are assumed to have {\em real or complex coefficients}, i.e., $\FF = \mathbb{R}$ or $\FF = \mathbb{C}$. We refer the reader to \cite{stewartsunbook} for the definitions and properties of the Frobenius norm, $\|\cdot\|_F$, and the spectral norm, $\|\cdot\|_2$, of constant matrices.

\begin{definition} \label{def:norm} Let $P(\la) = \sum_{i=0}^d P_i \la^i \in \FF [\la]^{m\times n}$. Then the Frobenius norm of $P(\lambda)$ is
\[
\|P(\la)\|_F := \sqrt{\sum_{i=0}^d \|P_i\|_F^2} \, .
\]
\end{definition}

Obviously $\|P(\la)\|_F$ defines a norm on the vector space of matrix polynomials with arbitrary grade and fixed size $m\times n$. In fact, Definition \ref{def:norm} defines a {\em family of norms}, because we have a different vector space, and, so, a different norm for each particular selection of size $m \times n$. This is important when considering the norm of the product $P(\la) Q(\la)$ of two polynomials $P(\la)$ and $Q(\la)$, since the sizes of the two factors and the product are, in general, different. In this context, it is also important to realize that the value of $\|P(\la)\|_F$ is independent of the grade chosen for $P(\la)$. This property allows us to work with $\|P(\la)\|_F$ without specifying the grade of $P(\la)$.

It is easy to construct examples that show that the norm $\|P(\la)\|_F$ is not submultiplicative, i.e., $\|P(\la) \, Q(\la)\|_F \nleq \|P (\la)\|_F \, \|Q(\la)\|_F$ in general \cite{Dopico:2016:BlockKronecker}. Therefore, since in Section \ref{sec:expansion} we need to bound the norms of certain products of matrix polynomials, we present Lemma \ref{lemma:normsproducts}, whose elementary but somewhat long proof is omitted. The interested reader can find the proof in the extended version of this paper \cite{Dopico:2016:BlockKronecker}.

\begin{lemma} \label{lemma:normsproducts} Let $P(\la) = \sum_{i=0}^d P_i \la^i$, let $Q(\la) = \sum_{i=0}^t Q_i \la^i$, and let $\Lambda_k (\la)^T$ be the vector polynomial defined in \eqref{eq:Lambda}. Then the following inequalities hold:
\begin{enumerate}
\item[\rm (a)] $\displaystyle \|P (\la) \, Q(\la)\|_F \leq \sqrt{d+1} \cdot \sqrt{\sum_{i=0}^d \|P_i\|_2 ^2}  \cdot \|Q(\la)\|_F$  ,

\item[\rm (b)] $\displaystyle \|P(\la) \, Q(\la)\|_F \leq \sqrt{t+1} \cdot \|P(\la)\|_F \cdot \sqrt{\sum_{i=0}^t \|Q_i\|_2 ^2}$ ,

\item[\rm (c)] $\|P (\la) \, Q(\la)\|_F \leq \, \min\{\sqrt{d+1} , \sqrt{t+1} \}\,  \|P (\la)\|_F \, \|Q(\la)\|_F$ ,

\item[\rm (d)] $\|P(\la) \, (\Lambda_k (\la) \otimes I_p) \|_F \leq \, \min\{\sqrt{d+1}, \sqrt{k+1} \} \, \|P(\la) \|_F$,

\item[\rm (e)] $\|(\Lambda_k (\la)^T \otimes I_p) \, Q(\la)\|_F \leq \, \min\{\sqrt{t+1}, \sqrt{k+1} \}  \, \|Q(\la) \|_F$,
\end{enumerate}
where we assume that all the products are defined.
\end{lemma}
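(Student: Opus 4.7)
The plan splits the five inequalities into three groups. Parts (a) and (b) will follow from a single Kronecker-product identity; part (c) will be an immediate corollary; parts (d) and (e) will require a separate direct argument, since applying (a)--(c) with $Q(\la) = \Lambda_k(\la)\otimes I_p$ would introduce the unwanted factor $\|\Lambda_k(\la)\otimes I_p\|_F = \sqrt{(k+1)p}$.

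For (a), I would rewrite $P(\la) = \mathbf{P}\,(\overline{\Lambda}_d(\la)\otimes I_n)$, where $\mathbf{P} := [P_0,\ldots,P_d]\in\FF^{m\times(d+1)n}$ is the constant block row of coefficients and $\overline{\Lambda}_d(\la) := [1,\la,\ldots,\la^d]^T = \rev_d\Lambda_d(\la)$. Then
\[
P(\la)Q(\la) \;=\; \mathbf{P}\,\bigl[(\overline{\Lambda}_d(\la)\otimes I_n)\,Q(\la)\bigr],
\]
and the bracketed polynomial factor is the block column whose $i$-th block is $\la^i Q(\la)$, so its squared Frobenius norm equals $(d+1)\|Q(\la)\|_F^2$. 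I would combine this with the elementary coefficient-wise inequality $\|AM(\la)\|_F \le \|A\|_2\|M(\la)\|_F$ for a constant $A$, and the block-row bound $\|\mathbf{P}\|_2 \le \sqrt{\sum_i \|P_i\|_2^2}$ (easy from $\|\mathbf{P}x\|_2 \le \sum_i \|P_i\|_2\|x_i\|_2$ and Cauchy--Schwarz on the blocks of $x$), to obtain (a). Part (b) would be proved symmetrically from $Q(\la) = (\overline{\Lambda}_t(\la)^T\otimes I_n)\,\mathbf{Q}$ with $\mathbf{Q} := [Q_0^T,\ldots,Q_t^T]^T$, using $\|M(\la)B\|_F \le \|M(\la)\|_F\|B\|_2$ and the analogous block-column bound for $\mathbf{Q}$. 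Part (c) would then follow at once, since $\|P_i\|_2 \le \|P_i\|_F$ gives $\sqrt{\sum_i\|P_i\|_2^2} \le \|P(\la)\|_F$ (and symmetrically for $Q$): (a) supplies the $\sqrt{d+1}$-bound, (b) the $\sqrt{t+1}$-bound, and their minimum is (c).

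For (d), I would partition $P(\la) = [P^{(0)}(\la),\ldots,P^{(k)}(\la)]$ into $k+1$ blocks of $p$ columns each, yielding
\[
P(\la)\,(\Lambda_k(\la)\otimes I_p) \;=\; \sum_{j=0}^k \la^{k-j}P^{(j)}(\la).
\]
Writing $P^{(j)}(\la) = \sum_l P^{(j)}_l\la^l$, the coefficient of $\la^s$ in the product is $R_s = \sum_{j\in J_s} P^{(j)}_{s-k+j}$, where $J_s := \{j : 0\le j\le k,\ 0\le s-k+j\le d\}$ has cardinality at most $\min(k+1,d+1)$. Cauchy--Schwarz gives $\|R_s\|_F^2 \le \min(k+1,d+1)\sum_{j\in J_s}\|P^{(j)}_{s-k+j}\|_F^2$, and summing over $s$ with the reindexing $l := s-k+j$ shows that every coefficient $P^{(j)}_l$ with $0\le j\le k$ and $0\le l\le d$ is counted exactly once on the right, producing $\min(k+1,d+1)\|P(\la)\|_F^2$ and hence (d). Part (e) would be handled symmetrically by partitioning $Q(\la)$ into $k+1$ row blocks and running the same argument.

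The only real obstacle I anticipate is the combinatorial bookkeeping in (d)--(e): one must verify carefully that the pairs $(j,\,s-k+j)$ sweep out every coefficient of every $P^{(j)}$ exactly once as $s$ varies, and that the triangle-inequality/Cauchy--Schwarz step loses exactly the factor $\sqrt{\min(k+1,d+1)}$ claimed. Everything else reduces to the Kronecker identity for $P(\la)$ (resp.\ $Q(\la)$) and standard norm inequalities for constant matrices, and is essentially routine.
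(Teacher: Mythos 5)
Your proof is correct. Note that the paper itself does not prove this lemma---it explicitly omits the ``elementary but somewhat long proof'' and defers it to the extended version \cite{Dopico:2016:BlockKronecker}---so there is no in-paper argument to compare against. Checking your argument on its own merits: the factorization $P(\la)=\mathbf{P}\,(\overline{\Lambda}_d(\la)\otimes I_n)$ is valid, the norm identity $\|(\overline{\Lambda}_d(\la)\otimes I_n)Q(\la)\|_F^2=(d+1)\|Q(\la)\|_F^2$ holds (the coefficient of $\la^s$ is the block column of the $Q_{s-i}$, and summing over $s$ and $i$ counts each $\|Q_j\|_F^2$ exactly $d+1$ times), and the two auxiliary inequalities $\|AM(\la)\|_F\le\|A\|_2\|M(\la)\|_F$ and $\|\mathbf{P}\|_2\le\sqrt{\sum_i\|P_i\|_2^2}$ are standard, so (a), (b), and hence (c) follow. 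You are also right that (d) and (e) need a separate argument---applying (b) with $Q(\la)=\Lambda_k(\la)\otimes I_p$ would give the weaker factor $k+1$ rather than $\sqrt{k+1}$, and (c) would introduce $\sqrt{(k+1)p}$---and your combinatorial bookkeeping checks out: $|J_s|\le\min(k+1,d+1)$, and the map $(s,j)\mapsto(j,\,s-k+j)$ is a bijection onto the index set of the coefficients $P^{(j)}_l$, so the double sum collapses to $\|P(\la)\|_F^2$ as claimed.
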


\medskip

Finally, in Section \ref{sec:expansion} we need to consider pairs of matrices $(C,D)$ where $C$ and $D$ may have different sizes. Therefore, $(C,D)$ cannot be considered as a matrix pencil. For these pairs, we introduce the corresponding Frobenius norm as:
\begin{equation} \label{eq:defnormpair}
\|(C,D)\|_F := \sqrt{\|C\|_F^2+\|D\|_F^2}.
\end{equation}

\section{Block minimal bases linearizations} \label{sec:minlinearizations} The linearizations considered in this work in Sections \ref{sec:linearization} and \ref{sec:expansion} are particular cases of the new pencils introduced in Definition \ref{def:minlinearizations}. These pencils include all the families of Fiedler-like linearizations of matrix polynomials, which have received considerable attention recently. For more information on this, see the extended version of this paper \cite[Section 4]{Dopico:2016:BlockKronecker} and \cite{budopereu-2016}. Therefore, Definition \ref{def:minlinearizations} seems to be a key concept that unifies and simplifies the theory of many of the linearizations existing in the literature. In this paper, the linearizations in Definition \ref{def:minlinearizations} are of interest because they are generic and robust under perturbations that preserve the zero block, as we discuss at the end of this section.

\begin{definition} \label{def:minlinearizations} A matrix pencil
\begin{equation} \label{eq:minbaspencil}
\EL{} = \begin{bmatrix} M(\la) & K_2 (\la)^T \\ K_1 (\la) & 0\end{bmatrix}
\end{equation}
is called a {\em block minimal bases pencil} if $K_1 (\la)$ and $K_2(\la)$ are both minimal bases.
If, in addition, the row degrees of $K_1 (\la)$ are all equal to $1$, the row degrees of $K_2 (\la)$ are all equal to $1$, the row degrees of a minimal basis dual to $K_1 (\la)$ are all equal, and the row degrees of a minimal basis dual to $K_2 (\la)$ are all equal, then $\EL{}$ is called a {\em strong block minimal bases pencil}.
\end{definition}

\begin{remark} {\rm
Observe in Definition \ref{def:minlinearizations} that the row degrees of any minimal basis dual to $K_1 (\la)$ are always the same, up to permutations, since they are the right minimal indices of $K_1 (\la)$. The same holds for $K_2 (\la)$. Therefore, there are no ambiguities in the definition of strong block minimal bases pencils with respect to the selection of the minimal bases dual to $K_1 (\la)$ and $K_2 (\la)$.}
\end{remark}

\medskip

Next theorem reveals that (strong) block minimal bases pencils are (strong) linearizations of certain matrix polynomials.

\begin{theorem} \label{thm:blockminlin}  Let $K_1 (\la)$ and $N_1 (\la)$ be a pair of dual minimal bases, and let $K_2 (\la)$ and $N_2 (\la)$ be another pair of dual minimal bases. Consider the matrix polynomial
\begin{equation} \label{eq:Qpolinminbaslin}
Q(\la) := N_2(\la) M(\la) N_1(\la)^T,
\end{equation}
and the block minimal bases pencil $\EL{}$ in \eqref{eq:minbaspencil}. Then:
\begin{enumerate}
\item[\rm (a)] $\EL{}$ is a linearization of $Q(\la)$.
\item[\rm (b)] If $\EL{}$ is a strong block minimal bases pencil, then $\EL{}$ is a strong linearization of $Q(\la)$, considered as a polynomial with grade $1 + \deg(N_1 (\la)) + \deg(N_2 (\la))$.
\end{enumerate}
\end{theorem}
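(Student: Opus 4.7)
The plan is to reduce the block pencil $\EL{}$ to the antitriangular form of Lemma \ref{lemma:antitriglin} by means of unimodular row and column operations built from Theorem \ref{thm:Embedding}. Since $(K_1,N_1)$ and $(K_2,N_2)$ are dual minimal bases, that theorem produces unimodular embeddings
\[
U_i(\la) = \begin{bmatrix} K_i(\la) \\ \widehat{K}_i(\la) \end{bmatrix}, \qquad U_i(\la)^{-1} = \begin{bmatrix} \widehat{N}_i(\la)^T & N_i(\la)^T \end{bmatrix}, \qquad i = 1,2,
\]
with some auxiliary polynomials $\widehat{K}_i,\widehat{N}_i$. Reading off the four blocks of $U_i U_i^{-1} = I$ gives the key identities $K_i \widehat{N}_i^T = I$, $K_i N_i^T = 0$, $\widehat{K}_i \widehat{N}_i^T = 0$, $\widehat{K}_i N_i^T = I$. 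These identities are exactly what is needed to make the normalization work cleanly.

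For part (a), I would right-multiply $\EL{}$ by $\mathrm{diag}(U_1(\la)^{-1}, I)$, which turns the bottom block row $[K_1(\la)\ 0]$ into $[I_{m_1}\ 0\ 0]$, and then left-multiply by $\mathrm{diag}(U_2(\la)^{-T}, I)$, which turns the right block column into $[I_{\hat{m}_1}\ 0\ 0]^T$. A direct block computation should yield
\[
\begin{bmatrix} \widehat{N}_2 M \widehat{N}_1^T & \widehat{N}_2 M N_1^T & I_{\hat{m}_1} \\ N_2 M \widehat{N}_1^T & Q(\la) & 0 \\ I_{m_1} & 0 & 0 \end{bmatrix},
\]
where the $(2,2)$ entry is precisely $Q(\la) = N_2 M N_1^T$ by construction. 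Lemma \ref{lemma:antitriglin} then immediately gives that $\EL{}$ is a linearization of $Q(\la)$.

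For part (b), I would apply part (a) to $\rev_1 \EL{} = \begin{bmatrix} \rev_1 M & (\rev_1 K_2)^T \\ \rev_1 K_1 & 0 \end{bmatrix}$. Under the strong-block-minimal-bases hypothesis, the row degrees of $K_i$ are all $1$ and those of $N_i$ are all equal to some common value $\ell_i = \deg(N_i)$. Theorem \ref{thm:minbasesdegreeseq}(a)--(b) then tells us that $\rev_1 K_i$ is again a minimal basis and that $\rev_1 K_i$ and $\rev_{\ell_i} N_i$ are dual minimal bases. So $\rev_1 \EL{}$ is itself a block minimal bases pencil, and part (a) identifies it as a linearization of
\[
\rev_{\ell_2} N_2(\la)\cdot \rev_1 M(\la) \cdot (\rev_{\ell_1} N_1(\la))^T.
\]
A one-line reversal identity $\la^{1+\ell_1+\ell_2}[N_2 M N_1^T](\la^{-1}) = (\la^{\ell_2} N_2(\la^{-1}))(\la M(\la^{-1}))(\la^{\ell_1} N_1(\la^{-1})^T)$ shows this product equals $\rev_d Q(\la)$ with $d = 1 + \ell_1 + \ell_2$, which is the grade prescribed in the statement.

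The one genuine subtlety—essentially the only obstacle—is part (b): I must check that the row-degree-equality hypothesis is strong enough to make the factors of $Q(\la)$ reverse block by block rather than separately, and I must verify that $d = 1 + \deg N_1 + \deg N_2$ is a legitimate grade for $Q(\la)$, i.e.\ that $\deg Q \le d$. Both follow from the degree bound $\deg(N_2 M N_1^T) \le \deg N_2 + 1 + \deg N_1$ applied with the fact that all row degrees of $N_i$ actually attain $\ell_i$, so the reversals factor multiplicatively. Everything else is mechanical block-matrix bookkeeping already carried out inside Theorem \ref{thm:Embedding} and Lemma \ref{lemma:antitriglin}.
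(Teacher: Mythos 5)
Your proposal is correct and follows essentially the same route as the paper: the same unimodular embeddings from Theorem \ref{thm:Embedding}, the same two-sided multiplication producing the block anti-triangular form of Lemma \ref{lemma:antitriglin} with $Q(\la)$ in the $(2,2)$ position, and the same reversal argument via Theorem \ref{thm:minbasesdegreeseq} for part (b). The only addition is your explicit check that $1+\deg N_1+\deg N_2$ is an admissible grade for $Q(\la)$, which the paper leaves implicit but which is a harmless and correct refinement.
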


\begin{proof} (a) According to Theorem \ref{thm:Embedding}, for $i=1,2$, there exist  unimodular matrix polynomials such that
\begin{equation} \label{eq:twounimodembed}
U_i(\lambda) =
\begin{bmatrix}
K_i(\lambda) \\ \widehat{K}_i(\lambda)
\end{bmatrix}
\quad \mbox{and} \quad
U_i(\lambda)^{-1}=
\begin{bmatrix}
\widehat{N}_i(\lambda)^T & N_i(\lambda)^T
\end{bmatrix}.
\end{equation}
Note that if $m_i$ is the number of rows of $K_i(\lambda)$, for $i=1,2$, then \eqref{eq:twounimodembed} implies $K_i(\lambda) \widehat{N}_i(\lambda)^T = I_{m_i}$ and $K_i(\lambda)  N_i(\lambda)^T = 0$. Keep in mind that these equalities are used in subsequent matrix products. Next, consider the unimodular matrices $U_2(\lambda)^{-T} \oplus I_{m_1}$ and $U_1(\lambda)^{-1} \oplus I_{m_2}$, and form the following matrix product:
\begin{align}
& (U_2(\lambda)^{-T} \oplus I_{m_1}) \, \EL{} \, (U_1(\lambda)^{-1} \oplus I_{m_2})  \nonumber \\
& \phantom{aaaaaaaaaa} =
\begin{bmatrix}
\widehat{N}_2(\lambda) & 0 \\ N_2(\lambda) & 0 \\0 & I_{m_1}
\end{bmatrix} \,
\begin{bmatrix} M(\la) & K_2 (\la)^T \\ K_1 (\la) & 0\end{bmatrix} \,
\begin{bmatrix}
\widehat{N}_1(\lambda)^T & N_1(\lambda)^T & 0 \nonumber \\
0 & 0 & I_{m_2}
\end{bmatrix} \\
&\phantom{aaaaaaaaaa} =  \begin{bmatrix}
Z(\lambda) & X(\lambda) & I_{m_2} \\
Y(\lambda) & Q(\lambda) & 0 \\
I_{m_1} & 0 & 0
\end{bmatrix}, \label{eq:XYZminlin}
\end{align}
where the expressions of the matrix polynomials $ X(\lambda), Y(\lambda)$, and  $Z(\lambda)$ are not of specific interest in this proof. Equation \eqref{eq:XYZminlin} and  Lemma \ref{lemma:antitriglin} prove that $\EL{}$ is a linearization of $Q(\la)$.

\smallskip

(b) Let us denote for brevity $\ell_1 = \deg(N_1 (\la))$ and $\ell_2 = \deg(N_2 (\la))$. Since $\EL{}$ is a strong block minimal bases pencil, Theorem \ref{thm:minbasesdegreeseq}(b) guarantees that $\rev_1 K_1(\la)$ and $\rev_{\ell_1} N_1(\la)$ are dual minimal bases, as well as $\rev_1 K_2(\la)$ and $\rev_{\ell_2} N_2(\la)$. Therefore, $$\rev_1 \EL{} = \begin{bmatrix}\rev_1 M(\la) & \rev_1 K_2 (\la)^T \\ \rev_1 K_1 (\la) & 0 \end{bmatrix}$$ is also a block minimal bases pencil and  Theorem \ref{thm:blockminlin}(a) (just proved) implies that $\rev_1 \EL{}$ is a linearization of
\begin{align*}
(\rev_{\ell_2} N_2(\la)) \, (\rev_1 M(\la)) \, (\rev_{\ell_1} N_1(\la))^T
& =
\la^{\ell_2} N_2 \left( \la^{-1} \right) \, \la  \, M \left( \la^{-1} \right) \, \la^{\ell_1} N_1 \left( \la^{-1} \right)^T \\
\la^{1+\ell_1 + \ell_2} Q(\la^{-1}) = \rev_{1+\ell_1 + \ell_2} Q(\la),
\end{align*}
proving part (b).
\end{proof}

\begin{remark} {\rm Given a {\em strong} block minimal bases pencil $\EL{}$, there are infinitely many minimal bases $N_1(\la)$ and $N_2(\la)$ dual to $K_1 (\la)$ and $K_2 (\la)$, respectively. Therefore, the matrix polynomial $Q (\la)$ is not defined uniquely by $\EL{}$. This is connected to the following remark: the standard scenario when using linearizations is that the matrix polynomial $Q(\la)$ is given and one wants to construct a linearization of $Q(\la)$ as easily as possible, but Theorem \ref{thm:blockminlin} seems to operate in the opposite way. However, if $Q(\la)$ is given and $N_1 (\la)$ and $N_2 (\la)$ are fixed, then \eqref{eq:Qpolinminbaslin} can be viewed as a linear equation for the unknown pencil $M(\la)$. It is possible to prove that this equation is always consistent, as a consequence of the properties of the minimal bases $N_1 (\la)$ and $N_2 (\la)$. Despite its consistency, the equation \eqref{eq:Qpolinminbaslin} may be very difficult to solve for arbitrary minimal bases $N_1 (\la)$ and $N_2 (\la)$. We will see in Section \ref{sec:linearization} that for certain particular choices of $N_1 (\la)$ and $N_2 (\la)$ it is very easy to characterize all possible solutions $M(\la)$ and to define, in this way, a new wide class of linearizations easily constructible from $Q(\la)$. This new class includes, among many others, all Fiedler linearizations, up to permutations, of square or rectangular polynomials \cite{p832,DDM2010Fiedler,DDM2012rectangular,q077}.
}
\end{remark}

\smallskip

\begin{remark} \label{rem:empty} {\rm We include in Definition \ref{def:minlinearizations} the cases in which either $K_1(\la)$ or $K_2 (\la)$ is an empty matrix. This means that $\EL{}$ is either a $1 \times 2$ or a $2\times 1$ block matrix, and, so, the zero block is not present. All of the proofs in this paper remain valid in these border cases with the following convention: if $K_1(\la)$ (resp. $K_2(\la)$) is an empty matrix, then $N_1 (\la) = I_s$ (resp. $N_2 (\la) = I_s$), where $s$ is the number of colums (resp. rows) of $M(\la)$.}
\end{remark}

\smallskip

Next, we investigate, for strong block minimal bases pencils, the relationship of the minimal indices of $Q(\la)$ in \eqref{eq:Qpolinminbaslin} with those of its strong linearization $\EL{}$ in \eqref{eq:minbaspencil}. This result is a corollary of a technical lemma presented in Appendix \ref{sec:appendixminbases}.

\begin{theorem} \label{thm:indicesminbaseslin}
Let $\EL{}$ be a strong block minimal bases pencil as in \eqref{eq:minbaspencil}, let $N_1(\la)$ be a minimal basis dual to $K_1 (\la)$, let $N_2(\la)$ be a minimal basis dual to $K_2 (\la)$, and let $Q(\la)$ be the matrix polynomial defined in \eqref{eq:Qpolinminbaslin}. Then the following hold:
\begin{enumerate}
\item[\rm (a)] If $0 \leq \e_1 \leq \e_2 \leq \cdots \leq \e_p$ are the right minimal indices of $Q(\la)$, then
\[
\e_1 + \deg(N_1(\la)) \leq \e_2 + \deg(N_1(\la)) \leq \cdots \leq \e_p +  \deg(N_1(\la))
\]
are the right minimal indices of $\EL{}$.
\item[\rm (b)] If $0 \leq \eta_1 \leq \eta_2 \leq \cdots \leq \eta_q$ are the left minimal indices of $Q(\la)$, then
\[
\eta_1 + \deg(N_2(\la)) \leq \eta_2 + \deg(N_2(\la)) \leq \cdots \leq \eta_q +  \deg(N_2(\la))
\]
are the left minimal indices of $\EL{}$.
\end{enumerate}
\end{theorem}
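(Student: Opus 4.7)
The plan is to build an explicit $\mathbb{F}(\lambda)$-linear isomorphism between $\mathcal{N}_r(Q)$ and $\mathcal{N}_r(\EL{})$ (and symmetrically between the left null spaces) and to show that this isomorphism shifts the degree of every polynomial null vector by exactly $\deg(N_1)$ (resp.\ $\deg(N_2)$). Once we have that degree-preserving correspondence, minimal bases transport to minimal bases and the claim follows.

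First I would fix the embeddings supplied by Theorem \ref{thm:Embedding}, obtaining polynomials $\widehat{N}_1(\la),\widehat{K}_1(\la)$ and $\widehat{N}_2(\la),\widehat{K}_2(\la)$ with the familiar identities $K_i N_i^T=0$, $K_i\widehat{N}_i^T=I$, $\widehat{K}_i N_i^T=I$, $\widehat{N}_i^T K_i+N_i^T\widehat{K}_i=I$. For any $x(\la)\in\mathcal{N}_r(Q)$ I would set
\[
 \Phi(x)(\la):=\begin{bmatrix} N_1(\la)^T\, x(\la) \\ -\widehat{N}_2(\la)\, M(\la)\, N_1(\la)^T\, x(\la) \end{bmatrix}.
\]
A direct computation using $K_1N_1^T=0$ and the transposed identity $K_2^T\widehat{N}_2+\widehat{K}_2^T N_2=I_{n_2}$ (applied to $M N_1^T x$, whose image under $N_2$ is $Qx=0$) shows $\EL{}\Phi(x)=0$. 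Injectivity of $\Phi$ is immediate because $N_1^T$ has full column rank, and surjectivity follows by reversing the construction: given $v=\begin{bmatrix} v_1 \\ v_2\end{bmatrix}\in\mathcal{N}_r(\EL{})$, the relation $K_1v_1=0$ forces $v_1=N_1^T x$ (uniquely, by recovering $x=\widehat{K}_1 v_1$ modulo the kernel data), then $K_2^T v_2=-M N_1^T x$ has the unique polynomial solution $v_2=-\widehat{N}_2 M N_1^T x$, and applying $N_2$ shows $Qx=0$.

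Next I would establish the degree identity $\deg \Phi(x)=\deg N_1+\deg x$. The hypothesis that $N_1(\la)$ is a minimal basis with all row degrees equal to $\ell_1:=\deg N_1$ lets me invoke the predictable-degree property of minimal bases (a standard consequence of row-reducedness in Theorem \ref{thm:minimal_basis}) to conclude
\[
\deg\bigl(N_1(\la)^T x(\la)\bigr) \;=\; \ell_1+\deg x.
\]
For the bottom block I would apply the predictable-degree property this time to $K_2(\la)$: since $K_2$ is a minimal basis with all row degrees $1$ and $M N_1^T x$ lies in the row span of $K_2$ (as $N_2(MN_1^T x)=Qx=0$), the unique solution $y=\widehat{N}_2 M N_1^T x$ of $K_2^T y = M N_1^T x$ satisfies $\deg y = \deg(MN_1^T x)-1 \leq 1+\ell_1+\deg x-1 = \ell_1+\deg x$. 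Hence $\deg \Phi(x)=\ell_1+\deg x$, exactly as required. This bookkeeping with the two predictable-degree estimates is essentially the content of the technical lemma relegated to Appendix \ref{sec:appendixminbases}.

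Finally, given a minimal basis $\{x_1,\dots,x_p\}$ of $\mathcal{N}_r(Q)$ with degrees $\e_1\le\cdots\le \e_p$, the images $\{\Phi(x_i)\}$ form an $\mathbb{F}(\lambda)$-basis of $\mathcal{N}_r(\EL{})$ with degrees $\ell_1+\e_i$; conversely, by pulling any minimal basis of $\mathcal{N}_r(\EL{})$ back through $\Phi^{-1}$, the degree identity forces its total degree to equal $p\ell_1 + \sum\e_i$, so $\{\Phi(x_i)\}$ is already minimal. Since $\EL{}$ is a strong linearization of $Q$ (Theorem \ref{thm:blockminlin}(b)), the number $p$ of right minimal indices coincides with that of $Q$, and part (a) follows. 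Part (b) is obtained by repeating the argument on $\EL{}^T$: the left null vectors of $\EL{}$ correspond under an analogous map $\Psi(z)=\bigl[\,z^T N_2,\; -z^T N_2 M\widehat{N}_1^T\,\bigr]$ to the left null vectors of $Q$, and the same predictable-degree computation, with the roles of $(K_1,N_1)$ and $(K_2,N_2)$ interchanged, gives the shift by $\ell_2=\deg N_2$. The main obstacle I expect is the degree control on the ``auxiliary'' bottom (resp.\ right) block, i.e., showing $\deg y\le\ell_1+\deg x$; this is where the technical lemma in Appendix \ref{sec:appendixminbases} does the careful work of translating predictable degrees through the embedding.
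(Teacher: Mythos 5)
Your proposal is correct and follows essentially the same route as the paper: your map $\Phi$ is exactly the one constructed in Lemma \ref{lemm:techindminbaslin} of Appendix \ref{sec:appendixminbases}, the degree shift is obtained from the same two predictable-degree estimates (exact equality $\deg(N_1^T x)=\deg N_1+\deg x$ from the equal row degrees and full-rank highest-degree coefficient of $N_1$, plus the upper bound on the $\widehat{N}_2MN_1^Tx$ block via the row degrees of $K_2$), and part (b) is handled by the same transposition argument. The only minor variation is that you certify minimality of the image basis through a total-order/pullback comparison, whereas the paper verifies the Forney criteria (full column rank at every $\lambda_0$ and column-reducedness) directly on the stacked matrix of images; both arguments are valid.
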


\begin{proof} Part (a) follows immediately from Lemma \ref{lemm:techindminbaslin}(b) and equation \eqref{eq:degreeshift1}. Part (b) follows simply from applying part (a) to $\EL{}^T$ and $Q(\la)^T$ after taking into account that: (i) $\EL{}^T$ is also a strong block minimal bases pencil with the roles of $(K_1(\la), N_1(\la))$ and $(K_2(\la),N_2 (\la))$ interchanged, (ii) so $\EL{}^T$ is a strong linearization of $Q(\la)^T$, and (iii) for any matrix polynomial its left minimal indices are the right minimal indices of its transpose.
\end{proof}

In order to concisely refer to results like those in Theorem \ref{thm:indicesminbaseslin} we use in this paper expressions as ``the right minimal indices of $\EL{}$ are those of $Q(\la)$ shifted by $\deg (N_1(\la))$'', whose rigorous meaning is precisely the statement of Theorem \ref{thm:indicesminbaseslin}(a).

Finally, we emphasize that {\em ``generically'' any pencil partitioned into $2\times 2$ blocks with a $(2,2)$-zero block as in \eqref{eq:minbaspencil} is a strong block minimal bases pencil} if the sizes of the blocks are adequate. This follows from the recent results in \cite[Section 5]{vddop-robust-2016} when the pencils $K_1(\la)$ and $K_2(\la)$ have both more columns than rows and the excess number of columns is a divisor of the number of rows. This makes the pencils in Definition \ref{def:minlinearizations} a very large family of strong linearizations very convenient for analyzing perturbations of the highly structured strong linearizations used in computational practice, as for instance the Frobenious companion forms \cite{LancasterBook}, because although the perturbations destroy the particular structures, as long as they are sufficiently small and the $(2,2)$-zero block is preserved, the perturbed linearization is still a strong linearization (in fact, a strong block minimal bases pencil) of a nearby polynomial obtained by \eqref{eq:Qpolinminbaslin} applied to the perturbed pencil. Note that the $(2,2)$-zero block is not present in the border cases discussed in Remark \ref{rem:empty}. These ideas are fundamental for the error analysis in Section \ref{sec:expansion}.

\section{Block Kronecker linearizations}\label{sec:linearization}
In this section we study those strong block minimal bases pencils with off-diagonal blocks equal to the pencils in Example \ref{ex-L-Lamb}. They are called {\em block Kronecker pencils}. Thus, these pencils have the structure in \eqref{eq:minbaspencil} with $K_1 (\la) = L_{\e} (\la) \otimes I_n$ and $K_2 (\la) = L_{\eta} (\la) \otimes I_m$. Since, according to Example \ref{ex-L-Lamb}, $N_1(\la) = \Lambda_{\e} (\la)^T \otimes I_n$ and $N_2 (\la) = \Lambda_{\eta} (\la)^T \otimes I_m$ are minimal bases dual to these $K_1 (\la)$ and $K_2 (\la)$, respectively, most properties of block Kronecker pencils follow immediately from the general and simple theory in Section \ref{sec:minlinearizations}, for these particular $K_i (\la)$ and $N_i (\la)$, $i=1,2$. Nonetheless, we emphasize that block Kronecker pencils have an essential advantage over general strong block minimal bases pencils that is key in applications: given a matrix polynomial $P(\la)$ it is very easy to characterize an infinite set of $(1,1)$-blocks $M (\la)$ that make $\EL{}$ in \eqref{eq:minbaspencil} a strong linearization of $P(\la)$. Moreover, as we discuss below, block Kronecker pencils include, as particular cases, the classical Frobenius companion forms and the Fiedler pencils \cite{DDM2010Fiedler,DDM2012rectangular} modulo permutations. Block Kronecker pencils are formally introduced in Definition \ref{def:blockKronlin}.

\begin{definition} \label{def:blockKronlin} Let $L_k (\la)$ be the matrix pencil defined in \eqref{eq:Lk} and let $\la M_1 + M_0$ be an arbitrary pencil. Then any matrix pencil of the form
\begin{equation}
  \label{eq:linearization_general}
  \begin{array}{cl}
  \EL{}=
  \left[
    \begin{array}{c|c}
      \lambda M_1+M_0&L_{\eta}(\lambda)^{T}\otimes I_{\rowdim}\\\hline
      L_{\e}(\lambda)\otimes I_{\coldim}&0
      \end{array}
    \right]&
    \begin{array}{l}
      \left. \vphantom{L_{\mu}^{T}(\lambda)\otimes I_{\rowdim}} \right\} {\scriptstyle (\eta+1)\rowdim}\\
      \left. \vphantom{L_{\e}(\lambda)\otimes I_{\coldim}}\right\} {\scriptstyle\e\coldim}
    \end{array}\\
    \hphantom{\EL{}=}
    \begin{array}{cc}
      \underbrace{\hphantom{L_{\e}(\lambda)\otimes I_{\coldim}}}_{(\e+1)\coldim}&\underbrace{\hphantom{L_{\mu}^{T}(\lambda)\otimes I_{\rowdim}}}_{\eta\rowdim}
    \end{array}
  \end{array}
  \>,
\end{equation}
is called an $(\e,n,\eta,m)$-block Kronecker pencil or, simply, a {\em block Kronecker pencil}. The partition of $\EL{}$ into $2 \times 2$ blocks in \eqref{eq:linearization_general} is called the natural partition of a block Kronecker pencil.
\end{definition}

The name ``block Kronecker pencil'' is motivated by the fact that the anti-diagonal blocks of $\EL{}$ in \eqref{eq:linearization_general} are Kronecker products of singular blocks of the Kronecker canonical form of pencils \cite[Chapter XII]{gantmacher1960theory} with identity matrices.

Since block Kronecker pencils are particular cases of strong block minimal bases pencils, we obtain the following result for block Kronecker pencils as an immediate corollary of Theorems \ref{thm:blockminlin} and \ref{thm:indicesminbaseslin} and the results in Example \ref{ex-L-Lamb}.

\begin{theorem}\label{thm:strong}
Let $\mathcal{L}(\lambda)$ be an $(\e , n, \eta , m)$-block Kronecker pencil as in \eqref{eq:linearization_general}.
Then $\mathcal{L}(\lambda)$ is a strong linearization of the  matrix polynomial
\begin{equation}
\label{eq:condition}
Q(\la) := (\Lambda_\eta(\lambda)^T\otimes I_m)(\lambda M_1+M_0)(\Lambda_{\e}(\lambda)\otimes I_n) \in \FF[\lambda]^{m\times n}
\end{equation}
of  grade $\e+\eta+1$, the right minimal indices of $\EL{}$ are those of $Q(\la)$ shifted by $\e$, and the left minimal indices of $\EL{}$ are those of $Q(\la)$ shifted by $\eta$.
\end{theorem}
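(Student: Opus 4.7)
The plan is to recognize that the theorem is essentially an immediate specialization of the general machinery from Section~\ref{sec:minlinearizations} to the concrete off-diagonal blocks chosen in Definition~\ref{def:blockKronlin}, so most of the work has already been done in Theorems \ref{thm:blockminlin} and \ref{thm:indicesminbaseslin}. All that remains is to check the hypotheses carefully and read off the identifications.

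First I would unpack \eqref{eq:linearization_general} in the language of Definition~\ref{def:minlinearizations}, setting $M(\la) := \la M_1 + M_0$, $K_1(\la) := L_{\e}(\la) \otimes I_{n}$ and $K_2(\la) := L_{\eta}(\la) \otimes I_{m}$. By Example~\ref{ex-L-Lamb} (combined with Corollary~\ref{cor:minbasesKron}), the pairs $(L_{\e}(\la) \otimes I_{n}, \Lambda_{\e}(\la)^T \otimes I_{n})$ and $(L_{\eta}(\la) \otimes I_{m}, \Lambda_{\eta}(\la)^T \otimes I_{m})$ are dual minimal bases, so in particular $K_1(\la)$ and $K_2(\la)$ are minimal bases and $\EL{}$ is a block minimal bases pencil. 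To upgrade to a \emph{strong} block minimal bases pencil I would verify the four degree conditions in Definition~\ref{def:minlinearizations}: every row of $L_{\e}(\la) \otimes I_{n}$ and of $L_{\eta}(\la) \otimes I_{m}$ has degree $1$, while every row of the dual bases $\Lambda_{\e}(\la)^T \otimes I_{n}$ and $\Lambda_{\eta}(\la)^T \otimes I_{m}$ has the common degree $\e$ and $\eta$ respectively. All four degree sequences are constant, so $\EL{}$ is a strong block minimal bases pencil.

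Next I would invoke Theorem~\ref{thm:blockminlin}(b) with $N_1(\la) = \Lambda_{\e}(\la)^T \otimes I_{n}$ and $N_2(\la) = \Lambda_{\eta}(\la)^T \otimes I_{m}$. The associated matrix polynomial defined by \eqref{eq:Qpolinminbaslin} becomes exactly
\[
Q(\la) = (\Lambda_{\eta}(\la)^T \otimes I_{m})(\la M_1 + M_0)(\Lambda_{\e}(\la) \otimes I_{n}),
\]
which matches \eqref{eq:condition}. Theorem~\ref{thm:blockminlin}(b) then asserts that $\EL{}$ is a strong linearization of $Q(\la)$ considered with grade $1 + \deg(N_1) + \deg(N_2) = 1 + \e + \eta$, which is the stated grade.

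Finally, for the minimal indices I would apply Theorem~\ref{thm:indicesminbaseslin} to the same data. Part~(a) says the right minimal indices of $\EL{}$ are those of $Q(\la)$ shifted by $\deg(N_1(\la)) = \e$, and part~(b) says the left minimal indices of $\EL{}$ are those of $Q(\la)$ shifted by $\deg(N_2(\la)) = \eta$, which is exactly the last clause of Theorem~\ref{thm:strong}. There is no real obstacle here; the only thing one must be careful with is the bookkeeping of the two dualities (which $\Lambda$ pairs with which $L$, and which side of $M(\la)$ carries which factor) so that $Q(\la)$ comes out with shape $m \times n$ and the shifts land on the correct (left versus right) minimal indices. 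The border cases $\e = 0$ or $\eta = 0$, where one of the $L$ blocks is empty, are handled by Remark~\ref{rem:empty}, so no separate argument is needed.
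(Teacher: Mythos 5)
Your proposal is correct and follows exactly the route the paper takes: the paper obtains Theorem \ref{thm:strong} as an immediate corollary of Theorems \ref{thm:blockminlin} and \ref{thm:indicesminbaseslin} applied to the dual minimal bases of Example \ref{ex-L-Lamb}, which is precisely the specialization you carry out. The only difference is that you spell out the verification of the degree hypotheses that the paper leaves implicit.
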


\begin{remark} \label{rem:explicitunimodular} {\rm Explicit unimodular matrices that transform any block Kronecker pencil as in \eqref{eq:linearization_general} into a block anti-triangular form \eqref{eq:XYZminlin} can be described via the matrices $V_k (\la)^{-1}$ in Example \ref{ex-embeddings}. In fact, an immediate corollary of Example \ref{ex-embeddings} and the block matrix multiplications yielding \eqref{eq:XYZminlin} in the proof of Theorem \ref{thm:blockminlin} is that
\begin{equation} \label{eq:explicitunimodular}
((V_\eta (\la)^{-T} \otimes I_m) \oplus I_{\e n}) \, \EL{} \,
((V_\e (\la)^{-1} \otimes I_n) \oplus I_{\eta m})
\end{equation}
has the block anti-triangular structure in \eqref{eq:XYZminlin}. This can also be checked via a direct multiplication, which proves in a simple way that block Kronecker pencils are linearizations of $Q(\la)$ as a consequence of Lemma \ref{lemma:antitriglin}. A similar approach can be used to prove that $\EL{}$ is a strong linearization of $Q(\la)$.
}
\end{remark}

\medskip

The most transparent examples of block Kronecker pencils are the classical first and second Frobenius companion forms of a matrix polynomial $P(\la) = \sum_{i=0}^{d} P_i \la^i \in \FF[\la]^{m \times n}$ \cite[Section 5.1]{de2014spectral}.  The first Frobenius companion form is just $\EL{}$ in \eqref{eq:linearization_general} with $M(\la) = [\la P_d + P_{d-1}, P_{d-2}, \ldots, P_0]$, $\e = d-1$, and $\eta =0$, while the second Frobenius companion form corresponds to $M(\la) = [\la P_d^T + P_{d-1}^T, P_{d-2}^T, \ldots, P_0^T]^T$, $\e = 0$, and $\eta =d-1$. Note that the application of Theorem \ref{thm:strong} in these two cases proves in a very simple way that the first and the second Frobenius companion forms are strong linearizations of $P(\la)$ with the well-known shifting relationships between the minimal indices (compare with the proofs in \cite[Section 5.1]{de2014spectral}).

It is also possible to prove with more effort that after performing some row and column permutations all Fiedler pencils of $P(\la) = \sum_{i=0}^{d} P_i \la^i \in \FF[\la]^{m \times n}$ \cite{DDM2010Fiedler,DDM2012rectangular} become block Kronecker pencils with the pencil $\la M_1 + M_0$  having a very simple structure that can be explicitly described in terms of the coefficients of $P(\la)$. This result can be found in the extended version of this paper \cite[Section 4]{Dopico:2016:BlockKronecker}, where it is proved that the only nonzero block entries of $\la M_1 + M_0$ are $\la P_d + P_{d-1}, P_{d-2}, \ldots, P_1, P_0$ distributed along what is called a ``staircase pattern'' \cite[Section 5]{condnumbFiedler} and \cite{eastman2}. Once this is established, Theorem \ref{thm:strong} proves again in a very simple way that all Fiedler pencils are strong linearizations of $P(\la)$ with the well-known shifting relationships between the minimal indices (compare with the cumbersome proofs in \cite{DDM2010Fiedler} and the very complicated ones in \cite{DDM2012rectangular}).

Next, we show what conditions on $\lambda M_1+M_0$ are needed for a block Kronecker pencil \eqref{eq:linearization_general} to be a strong linearization of a {\em prescribed} matrix polynomial $P(\lambda)$.

\begin{theorem} \label{thm:givenPblockKron}
Let $P(\lambda) = \sum_{k=0}^d P_k \la^k \in \FF[\la]^{m \times n}$, let $\mathcal{L}(\lambda)$ be an $(\e,n,\eta,m)$-block Kronecker pencil as in \eqref{eq:linearization_general} with $\e + \eta + 1 =d$, let us consider $M_0$ and $M_1$ partitioned into $(\eta+1)\times (\e +1)$ blocks each of size $m\times n$, and let us denote these blocks by $[M_0]_{ij}, [M_1]_{ij} \in\FF^{m\times n}$  for $i=1,\hdots,\eta+1$ and $j=1,\hdots,\e +1$.
If
 \begin{equation}\label{eq:condition_coeff}
   \sum_{i+j=\pdeg+2-k} [M_1]_{ij} + \sum_{i+j=\pdeg+1-k} [M_0]_{ij}=P_k, \quad \mbox{for $k=0,1,\hdots,d$,}
 \end{equation}
then $\mathcal{L}(\lambda)$ is a strong linearization of $P(\lambda)$, the right minimal indices of $\EL{}$ are those of $P(\la)$ shifted by $\e$, and the left minimal indices of $\EL{}$ are those of $P(\la)$ shifted by $\eta$.
\end{theorem}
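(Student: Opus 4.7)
The plan is to reduce this statement directly to Theorem \ref{thm:strong}. That theorem already tells us that any $(\e,n,\eta,m)$-block Kronecker pencil is a strong linearization of the specific polynomial $Q(\la) = (\Lambda_\eta(\la)^T \otimes I_m)(\la M_1 + M_0)(\Lambda_\e(\la)\otimes I_n)$ of grade $\e+\eta+1$, with the claimed shifts on the minimal indices. So the only thing left to verify is that, under the grade condition $\e+\eta+1=d$ and the coefficient condition \eqref{eq:condition_coeff}, we have $Q(\la)=P(\la)$.

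First, I would expand $Q(\la)$ in block form. Since $\Lambda_\eta(\la)^T \otimes I_m = [\la^\eta I_m, \la^{\eta-1}I_m, \ldots, I_m]$ and $\Lambda_\e(\la)\otimes I_n = [\la^\e I_n, \ldots, I_n]^T$, the product reads
\[
Q(\la) \;=\; \sum_{i=1}^{\eta+1}\sum_{j=1}^{\e+1} \la^{\eta+1-i}\bigl(\la[M_1]_{ij}+[M_0]_{ij}\bigr)\la^{\e+1-j}.
\]
Collecting powers of $\la$ and using $\e+\eta+1=d$, this becomes
\[
Q(\la) \;=\; \sum_{i,j} \la^{d+2-i-j}\,[M_1]_{ij} \;+\; \sum_{i,j} \la^{d+1-i-j}\,[M_0]_{ij}.
\]

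Next I would read off the coefficient of $\la^k$ in $Q(\la)$, which is precisely
\[
\sum_{i+j=d+2-k}[M_1]_{ij} \;+\; \sum_{i+j=d+1-k}[M_0]_{ij}.
\]
This is exactly the left-hand side of \eqref{eq:condition_coeff}, so the hypothesis yields $Q(\la)=P(\la)$ as polynomials of grade $d=\e+\eta+1$. Applying Theorem \ref{thm:strong} to $Q(\la)=P(\la)$ then produces the three conclusions of the statement simultaneously: $\mathcal{L}(\la)$ is a strong linearization of $P(\la)$, the right minimal indices of $\mathcal{L}(\la)$ are those of $P(\la)$ shifted by $\e$, and the left minimal indices of $\mathcal{L}(\la)$ are those of $P(\la)$ shifted by $\eta$.

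There is no real obstacle here; the proof is essentially bookkeeping on the indices $i+j$ in the double sum, and all the deep content (that the block Kronecker pencil is a strong linearization of whatever polynomial \eqref{eq:condition} produces, and that the minimal indices shift uniformly) has already been carried out in Section \ref{sec:minlinearizations} and specialized in Theorem \ref{thm:strong}. The only mild care needed is to make sure the ranges of $i$ and $j$ in the two sums on the left-hand side of \eqref{eq:condition_coeff} are interpreted with the convention $[M_0]_{ij}=[M_1]_{ij}=0$ whenever $(i,j)$ is outside the block grid $\{1,\ldots,\eta+1\}\times\{1,\ldots,\e+1\}$, which is automatic.
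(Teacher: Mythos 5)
Your proof is correct and follows essentially the same route as the paper: expand $Q(\la)$ from \eqref{eq:condition} in block form, collect powers of $\la$ using $\e+\eta+1=d$ to identify the coefficient of $\la^k$ with the left-hand side of \eqref{eq:condition_coeff}, conclude $Q(\la)=P(\la)$, and invoke Theorem \ref{thm:strong}. No issues.
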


\begin{proof} A direct multiplication, the condition $\e + \eta + 1 =d$, and some elementary manipulations of summations allow us to express $Q(\la)$ in \eqref{eq:condition} as
\[
Q(\la) = \sum_{k=0}^d \la^k \left( \sum_{i+j=\pdeg+2-k} [M_1]_{ij} +\sum_{i+j=\pdeg+1-k} [M_0]_{ij} \right).
\]
Then \eqref{eq:condition_coeff} implies that $Q(\la) = P(\la)$ and the result follows from Theorem \ref{thm:strong}.
\end{proof}

Theorem \ref{thm:givenPblockKron} admits a revealing interpretation in terms of block antidiagonals of $M_0$ and $M_1$. To see this, note that equation \eqref{eq:condition_coeff} tells us that the sum of the blocks on the $(d-k)$th block antidiagonal of $M_0$ plus the sum of the blocks on the $(d-k+1)$th block antidiagonal of $M_1$ must be equal to the coefficient $P_k$ of $P(\la)$. This implies that the upper-left block of $M_1$ must be equal to $P_d$, and that the lower-right block of $M_0$ must be equal to $P_0$, that is, the pencil $\lambda M_1+M_0$ has the form
\begin{equation}
  \lambda M_1 +M_0=
  \left[
    \begin{array}{ccc}
      \lambda P_{\pdeg}+[M_0]_{11} &\iddots&\iddots\\
      \iddots&\iddots&\iddots\\
      \iddots&\iddots &\lambda [M_1]_{\eta+1,\e+1} +P_0
    \end{array}
  \right]\>.
\end{equation}
There are infinitely many ways to select the remaining block entries of $M_1$ and $M_0$ to {\it synthesize} $P(\lambda)$ in the pencil $\lambda M_1+M_0$.

In Example \ref{ex-blockKron} we show three different block Kronecker pencils that are all strong linearizations of a grade $5$ matrix polynomial $P(\lambda)$. These three pencils have parameters $\e = \eta = 2$.
Moreover, the corresponding pencils $\lambda M_1+M_0$ in these block Kronecker pencils do not follow a staircase pattern for $\la P_5 + P_4, P_3, \ldots , P_0$, that is, they are not  permuted Fiedler pencils \cite[Theorem 4.5]{Dopico:2016:BlockKronecker}.
\begin{example} \label{ex-blockKron} {\rm
Let $P(\lambda) =\sum_{k=0}^5 P_k\lambda^k \in \FF [\la]^{m \times n}$ and let $A,B \in \FF^{m\times n}$ be arbitrary constant matrices. The following block Kronecker pencils
\begin{align*}
&\left[\begin{array}{ccc|cc}
\lambda P_5+P_4 & 0 & 0 & -I_m & 0 \\
0 & \lambda P_3+P_2& 0 & \lambda I_m & -I_m \\
0 & 0 & \lambda P_1+P_0 & 0 & \lambda I_m \\ \hline
-I_n & \lambda I_n & 0 & 0 & 0 \\
0 & -I_n & \lambda I_n & 0 & 0
\end{array}\right], \\
&\left[\begin{array}{ccc|cc}
\lambda P_5 & \lambda P_4 & \lambda P_3 & -I_m & 0 \\
0 & 0& \lambda P_2 & \lambda I_m & -I_m \\
0 & 0 & \lambda P_1+P_0 & 0 & \lambda I_m \\ \hline
-I_n & \lambda I_n & 0 & 0 & 0 \\
0 & -I_n & \lambda I_n & 0 & 0
\end{array}\right], \quad \mbox{and} \\
&\left[\begin{array}{ccc|cc}
\lambda P_5 & A & P_2 & -I_m & 0 \\
\lambda P_4 & -\lambda A& \lambda B+P_1 & \lambda I_m & -I_m \\
\lambda P_3 & -\lambda B & P_0 & 0 & \lambda I_m \\ \hline
-I_n & \lambda I_n & 0 & 0 & 0 \\
0 & -I_n & \lambda I_n & 0 & 0
\end{array}\right]
\end{align*}
are all strong linearizations of $P(\lambda)$.}
\end{example}

\begin{remark} \label{rem.lastsecblockKron} {\rm As discussed above, equation \eqref{eq:condition_coeff} allows us to construct infinitely many block Kronecker pencils that are strong linearizations of a prescribed matrix polynomial $P(\la)$. Therefore, a natural question is which ones can be reliably used for computing all the eigenvalues of $P(\la)$, when $P(\la)$ is regular, or all the eigenvalues and minimal indices of $P(\la)$, when $P(\la)$ is singular, via either the QZ algorithm \cite{golubvanloan4} or the staircase algorithm \cite{demmelkagstrom1,demmelkagstrom2,vandoorenstaircase}. From the point of view of backward errors, this is clearly stated in Corollary \ref{cor:FINperturbation} and carefully analyzed in the paragraphs before that corollary, but we advance here the main conclusions for impatient readers. First, the use of block Kronecker pencils \eqref{eq:linearization_general} is reliable only if $\|\la M_1 + M_0\|_F \approx \|P(\la)\|_F$. This is intuitively natural, because, according to \eqref{eq:condition}, if $\|\la M_1 + M_0\|_F \gg \|P(\la)\|_F$, then small relative perturbations in $\la M_1+M_0$ might produce huge perturbations in $P(\la)$, and $\|\la M_1 + M_0\|_F \ll \|P(\la)\|_F$ cannot happen as a consequence of \eqref{eq:condition} and Lemma \ref{lemma:normsproducts}. In addition,
$\|\la M_1 + M_0\|_F \approx \|P(\la)\|_F \approx 1$ must also hold, which is also natural since either $\|P(\la)\|_F \ll 1$ or $\|P(\la)\|_F \gg 1$ would lead to highly unbalanced block Kronecker pencils \eqref{eq:linearization_general}, with the norms of the antidiagonal blocks either much larger or much smaller than the norm of the $(1,1)$ block. In fact, it is proved in Corollary \ref{cor:FINperturbation} that any block Kronecker pencil with $\|\la M_1 + M_0\|_F \approx \|P(\la)\|_F \approx 1$ leads to small relative backward errors from the polynomial point of view. This condition still allows us to use infinitely many pencils that might have additional advantages as preservation of structures.
}
\end{remark}

\section{Backward error analysis of complete polynomial eigenproblems sol\-ved via block Kronecker pencils}\label{sec:expansion}
The problem of computing in floating point arithmetic the complete eigenstructure of a matrix polynomial $P(\la)$ is called in this paper the {\em complete polynomial eigenproblem}. The complete eigenstructure consists of all of the eigenvalues, finite and infinite, and all of the minimal indices, left and right, of $P(\la)$. This eigenstructure can be efficiently computed via the {\em staircase algorithm for matrix pencils} applied to any strong linearization $\EL{}$ of the polynomial that allows us to recover the minimal indices of the polynomial from those of the linearizations via constant shifts (like those of Theorem \ref{thm:givenPblockKron} for block Kronecker pencils). The staircase algorithm for pencils was introduced for the first time in \cite{vandoorenstaircase} and was further developed in \cite{demmelkagstrom1,demmelkagstrom2}, where reliable software for computing such a staircase form was presented. Though problems involving singular polynomials arise very often in control theory, the matrix polynomials arising in many other applications are normally square and regular. In this case the complete eigenstructure does not include minimal indices and the algorithm of choice is the simpler {\em QZ algorithm} \cite{golubvanloan4}.

The staircase and the QZ algorithms have been shown to be {\em backward stable}, but it ought to be stressed that the backward stability of these two algorithms does {\em not} imply that the computed eigenstructure is the exact one of the given linearization: in general this problem is ill-posed, which implies that even an arbitrarily small perturbation may yield a different eigenstructure. Since this is not the subject of this paper, we refer to \cite{edelelmkags1,edelelmkags2} for a more elaborate discussion on these aspects. Nonetheless, the standard backward error results guarantee that if the staircase algorithm or the QZ algorithm are applied to a strong linearization $\EL{}$ in a computer with unit roundoff $\mathbf{u}$, then {\em the computed complete eigenstructure of $\EL{}$ is the exact complete eigenstructure of a nearby matrix pencil  $\mathcal{L}(\lambda) + \Delta \mathcal{L}(\lambda)$ such that}
\begin{equation} \label{eq:initialbackwarderror}
\frac{\|\Delta \mathcal{L}(\lambda)\|_F}{\|\mathcal{L}(\lambda)\|_F} = O(\mathbf{u}),
\end{equation}
where $\|\cdot\|_F$ denotes the Frobenius norm introduced in Definition \ref{def:norm}. However, \eqref{eq:initialbackwarderror} is not the desired ideal result for the  original problem of computing the complete eigenstructure of the matrix polynomial $P(\la)$ of given grade $d$. The desired backward error result would be that {\em the computed complete eigenstructure of $P(\la)$ is the exact complete eigenstructure of a nearby matrix polynomial  $P(\lambda) + \Delta P(\lambda)$ also of grade $d$ and such that}
\begin{equation} \label{eq:idealbackwarderror}
\frac{\|\Delta P(\lambda)\|_F}{\|P(\lambda)\|_F} = O(\mathbf{u}).
\end{equation}

In order to establish \eqref{eq:idealbackwarderror}, if possible, starting from \eqref{eq:initialbackwarderror}, two results must be proved: {\em (i)} that the perturbed pencil  $\mathcal{L}(\lambda) + \Delta \mathcal{L}(\lambda)$ is a strong linearization for some matrix polynomial $P(\lambda) + \Delta P(\lambda)$ of grade $d$ with the shifting relations between the minimal indices of $\mathcal{L}(\lambda) + \Delta \mathcal{L}(\lambda)$ and $P(\lambda) + \Delta P(\lambda)$ equal to the shifting relations between the minimal indices of $\mathcal{L}(\lambda)$ and $P(\lambda)$; and {\em (ii)} to prove a {\em perturbation bound} of the type
\begin{equation} \label{eq:perturbationbound}
\frac{\|\Delta P(\lambda)\|_F}{\|P(\lambda)\|_F} \leq C_{P,\mathcal{L}} \, \frac{\|\Delta \mathcal{L}(\lambda)\|_F}{\|\mathcal{L}(\lambda)\|_F},
\end{equation}
with $C_{P,\mathcal{L}}$ a moderate number depending, in principle, on $P(\la)$ and $\EL{}$. We emphasize that to prove {\em (i)} is much easier for regular than for singular polynomials, because in the former case there are no minimal indices involved in the computations. Observe also that the minimal indices of $P(\la)+\Delta P(\lambda)$ are computed via the recovery rules valid for the unperturbed linearization $\EL{}$ applied to the computed minimal indices of $\EL{}$, that is, to the exact minimal indices of $\EL{} + \Delta \EL{}$. Therefore, if the recovery rules for the minimal indices of $\EL{} + \Delta \EL{}$ were different than those of $\EL{}$, such a method for computing the minimal indices of $P(\lambda)$ would not make any sense because the minimal indices are integer numbers. We repeat that thereby, we do not claim that the exact eigenstructure of $\EL{}$ was computed, or, even more, that the computed eigenstructure is close to that of $\EL{}$, but rather that the exact eigenstructure of a {\em nearby} pencil $\EL{} +\Delta \EL{}$ was computed, which may be quite different than the one of $\EL{}$ for ill-conditioned problems \cite{edelelmkags1,edelelmkags2}.

The goal of this section is to study these questions for any block Kronecker pencil $\EL{}$ as in \eqref{eq:linearization_general} of a given polynomial $P(\la)$ of grade $d$ and size $m\times n$. In plain words, we will prove that {\em if the block Kronecker pencil satisfies $\|\la M_1 + M_0\|_F  \approx \|P(\la)\|_F$ and $P(\la)$ is scaled to satisfy $\|P(\la)\|_F = 1$}, then \eqref{eq:perturbationbound} holds with $C_{P,\mathcal{L}} \approx d^{3} \sqrt{m+n}$. Therefore, {\em under these two conditions, we get perfect structured backward stability from the polynomial point of view} when the block Kronecker pencils are combined with the staircase or QZ algorithms for computing the complete eigenstructure of $P(\la)$. We emphasize that this is no longer true if  $\|\la M_1 + M_0\|_F  \gg \|P(\la)\|_F$, because in this case we will prove that $C_{P,\mathcal{L}}$ in \eqref{eq:perturbationbound} is huge. Note that $\|\la M_1 + M_0\|_F  \gg \|P(\la)\|_F$ may happen, for instance, if in the last block Kronecker pencil of Example \ref{ex-blockKron} the arbitrary matrices $A$ or $B$ have very large norms. Observe that the permuted Fiedler pencils in \cite[Theorem 4.5]{Dopico:2016:BlockKronecker} satisfy $\|\la M_1 + M_0\|_F  = \|P(\la)\|_F$ and, so, our analysis guarantees perfect structured polynomial backward stability for all Fiedler pencils.

Backward error analyses valid simultaneously for the complete eigenstructure, i.e., global analyses, of complete polynomial eigenproblems (and complete scalar rootfinding problems) solved by linearizations are not new in the literature. They appeared for the first time in the seminal paper \cite{van1983eigenstructure}, were studied in the influential work \cite{Edelman1995}, and have received considerable attention in recent years \cite{deTeran2015imajna,lawrence-corless-2014,lawrence-corless-2015,LVBVD,nakatsukasa-noferini,Chebyrootfinding}.
However, we stress that the analysis developed in this paper has a number of key features which are not present in any of the other analyses published so far: first, it is not a first order analysis since it holds for perturbations $\Delta \EL{}$ of finite norm; second, it provides very detailed bounds, and not just vague big-O bounds as other analyses do; third, it is valid simultaneously for a very large class of linearizations for which backward error analyses are not yet known; and, fourth, it establishes a framework that may be generalized to other classes of linearizations.

Before proceeding, we remark that our analysis is of a completely different nature than the ``local'' residual backward error analyses presented in \cite{Higham06backwarderror,tisseurlaa2000}, which are only valid for regular matrix polynomials, are based on the residual of a particular computed eigenvalue-vector pair, and find a nearby polynomial to the original one that has as exact eigenpair the particular computed one. A key difference with our analysis is that in these local analyses the nearby polynomial is different for each computed eigenpair, while in our case it is the same for the complete eigenstructure.

The main result in this section is Theorem \ref{thm:perturbation}, whose proof requires considerable efforts. The proof is split into three main steps that are briefly described in the next paragraphs in such a way that the reader may follow easily the main flow of the proof. We emphasize that the complete eigenstructure of the initial perturbed pencil $\EL{} + \Delta \EL{}$ does not change in the three steps except for the constant shifts of the minimal indices in the third step. In this section we assume that $\FF = \mathbb{R}$ or $\FF = \mathbb{C}$.

\smallskip
\noindent
{\bf Initial data.} A matrix polynomial $P(\lambda)=\sum_{k=0}^d P_k \la^k \in \FF[\la]^{m\times n}$ and a block Kronecker pencil $\mathcal{L}(\lambda)$ as in \eqref{eq:linearization_general} such that
\begin{equation} \label{eq:Pinsectionerrors}
P(\lambda) = (\Lambda_\eta(\lambda)^T\otimes I_m)(\lambda M_1+M_0)(\Lambda_{\e}(\lambda)\otimes I_n), \qquad \mbox{with $\e + \eta + 1 = d$,}
\end{equation}
are given. A perturbation pencil $\Delta \EL{}$ of $\EL{}$ is also given and is partitioned conformably to the natural partition of $\EL{}$, that is,
\begin{equation}\label{eq:perturbed_pencil}
\mathcal{L}(\lambda) + \Delta \mathcal{L}(\lambda) =
\left[\begin{array}{c|c}
\lambda M_1+M_0 + \Delta \mathcal{L}_{11}(\lambda) & L_\eta(\lambda)^T\otimes I_m + \Delta \mathcal{L}_{12}(\lambda) \\ \hline
L_{\e}(\lambda)\otimes I_n + \Delta \mathcal{L}_{21}(\lambda) & \Delta \mathcal{L}_{22} (\lambda)
\end{array}\right].
\end{equation}

\smallskip
\noindent
{\bf First step.}  We establish a bound on $\|\Delta \mathcal{L}(\lambda)\|_F$ that allows us to construct a strict equivalence transformation that returns the $(2,2)$-block of the perturbed pencil \eqref{eq:perturbed_pencil} back to zero as in $\EL{}$:
\begin{align}
  \label{eq:constant_reduction_big}
    &\left[
    \begin{array}{cc}
      I_{(\eta+1)\rowdim}&0\\
        C&I_{\e\coldim}
    \end{array}
  \right]
  \left(
 \mathcal{L}(\lambda)+\Delta \mathcal{L}(\lambda)
  \right)
  \left[
    \begin{array}{cc}
      I_{(\e+1)\coldim}&D\\
      0&I_{\eta\rowdim}
    \end{array}
  \right]\\
  & \nonumber \phantom{aaaa} =
  \left[
    \begin{array}{cc}
      \lambda M_1+M_0+\DEL{11}& L_\eta (\lambda)^T\otimes I_m+ \Delta \widetilde{\mathcal{L}}_{12}(\lambda) \\
      L_{\e}(\lambda)\otimes I_n+\Delta \widetilde{\mathcal{L}}_{21}(\lambda)&0
    \end{array}
  \right] =: \mathcal{L}(\lambda)+\Delta \widetilde{\mathcal{L}}(\lambda).
\end{align}
This construction is equivalent to solving a nonlinear system of matrix equations whose unknowns are the constant matrices $C$ and $D$.  Moreover, we prove detailed bounds on $\|(C,D)\|_F$, $\|\Delta \widetilde{\mathcal{L}}_{12}(\lambda)\|_F$, and $\|\Delta \widetilde{\mathcal{L}}_{21}(\lambda) \|_F$ in terms of $\|\Delta \mathcal{L}(\lambda)\|_F$. It is important to remark that $\mathcal{L}(\lambda)+\Delta \mathcal{L}(\lambda)$ and the pencil $\mathcal{L}(\lambda)+\Delta \widetilde{\mathcal{L}}(\lambda)$ in \eqref{eq:constant_reduction_big} have the same complete eigenstructures (including minimal indices), since they are strictly equivalent \cite[Definition 3.1]{de2014spectral}.

\begin{remark} \label{rem:2empty} {\rm This first step is not needed if either $\e =0$ or $\eta =0$, which means that one of the anti-diagonal blocks and the zero block in \eqref{eq:linearization_general} are not present. These cases are important since include the first and second Frobenius companion pencils.
}
\end{remark}

\smallskip
\noindent
{\bf Second step.}
The second step consists of establishing bounds on $\|\Delta \widetilde{\mathcal{L}}_{12}(\lambda)\|_F$ and $\|\Delta \widetilde{\mathcal{L}}_{21}(\lambda) \|_F$ that guarantee that {\em $\mathcal{L}(\lambda)+\Delta \widetilde{\mathcal{L}}(\lambda)$ in \eqref{eq:constant_reduction_big} is a strong block minimal bases pencil}. This requires two substeps: (a) to prove that $K_1(\la) := L_{\e}(\lambda)\otimes I_n+\Delta \widetilde{\mathcal{L}}_{21}(\lambda)$ and $K_2(\la) := L_\eta(\lambda)\otimes I_m+ \Delta \widetilde{\mathcal{L}}_{12}(\lambda)^T$ are both minimal bases with their row degrees all equal to $1$, and (b) to prove that there exist minimal bases
$$\Lambda_{\e}(\lambda)^T \otimes I_n +\Delta R_\e(\lambda)^T \quad \mbox{and} \quad \Lambda_\eta(\lambda)^T \otimes I_m + \Delta R_\eta (\lambda)^{T}$$
dual, respectively, to $K_1(\la)$ and $K_2(\la)$ with their row degrees all equal, respectively, to $\e$ and $\eta$. In addition, we prove detailed bounds on $\|\Delta R_\e(\lambda)\|_F$ in terms of $\|\Delta \widetilde{\mathcal{L}}_{21}(\lambda) \|_F$, and on $\|\Delta R_\eta(\lambda)\|_F$ in terms of $\|\Delta \widetilde{\mathcal{L}}_{12}(\lambda) \|_F$.

\begin{remark} \label{rem:onlyoneproof} {\rm It is only needed to prove the results in the substeps (a) and (b) for $K_1(\la) = L_{\e}(\lambda)\otimes I_n+\Delta \widetilde{\mathcal{L}}_{21}(\lambda)$ and $\Lambda_{\e}(\lambda)^T \otimes I_n +\Delta R_\e(\lambda)^T$, since, then, the ones for $K_2(\la) = L_\eta(\lambda)\otimes I_m+ \Delta \widetilde{\mathcal{L}}_{12}(\lambda)^T$ and $\Lambda_\eta(\lambda)^T \otimes I_m + \Delta R_\eta (\lambda)^{T}$ follow as corollaries.}
\end{remark}

\smallskip
\noindent
{\bf Third step.} Combining the first and second steps and Theorems \ref{thm:blockminlin} and \ref{thm:indicesminbaseslin}, we get that $\mathcal{L}(\lambda)+\Delta \mathcal{L}(\lambda)$ is a strong linearization of the matrix polynomial
\begin{align} \label{eq:polyperturbed3step}
& P(\lambda) + \Delta P(\lambda) \\ & \phantom{aaaa} := \left( \Lambda_\eta(\lambda)^T\otimes I_m+\Delta R_\eta (\lambda)^{T}\right)
\left( \la M_1 +M_0 +\Delta \mathcal{L}_{11}(\lambda)  \right)
\left( \Lambda_{\e}(\lambda)\otimes I_n+\Delta R_\e (\lambda)\right), \nonumber
\end{align}
that the right minimal indices of $\mathcal{L}(\lambda)+\Delta \mathcal{L}(\lambda)$ are those of $P(\lambda) + \Delta P(\lambda)$ shifted by $\e$, and that the left minimal indices of $\mathcal{L}(\lambda)+\Delta \mathcal{L}(\lambda)$ are those of $P(\lambda) + \Delta P(\lambda)$ shifted by $\eta$, i.e., the shifting relations between the minimal indices are the same as those between the minimal indices of $\mathcal{L}(\lambda)$ and $P(\lambda)$. The rest of the proof consists of bounding $\|\Delta P (\la)\|_F / \|P (\la)\|_F$ in terms of $\|\Delta \EL {}\|_F / \|\EL{}\|_F$ using the bounds obtained in the first and second steps.

\medskip

In the rest of this section, the three steps described above are developed in detail. We use very often, without explicitly referring to, the properties of the Frobenius norm of matrix polynomials in Lemma \ref{lemma:normsproducts} and, also, that for any matrix polynomial $P(\la)$ and any submatrix $B(\la)$ of $P(\la)$, the inequality $\|B(\la)\|_F \leq  \|P(\la)\|_F$ holds.

\subsection{First step: solving a system of quadratic Sylvester-like matrix equations for constructing the strict equivalence \eqref{eq:constant_reduction_big}} \label{sec:firststep}
For brevity, hereafter we use the following notation for the anti-diagonal blocks of block Kronecker pencils, which are constructed from the pencil \eqref{eq:Lk}: $L_{k}(\lambda)\otimes I_{\ell} =:(\lambda F_{k} - E_{k})\otimes I_{\ell}=: \lambda F_{k\ell}-E_{k\ell}$, where
\begin{equation}\label{eq:bases_definitions}
  E_{k\ell}=
  \left[
    \begin{array}{cc}
      I_{k}&0_{k\times 1}
    \end{array}
  \right]\otimes I_\ell\>,\quad \mbox{and} \quad
  F_{k\ell}=
  \left[
    \begin{array}{cc}
      0_{k\times 1}&I_{k}
    \end{array}
  \right]\otimes I_\ell\>.
\end{equation}
In addition, the natural blocks of the perturbation $\Delta \EL{}$ in \eqref{eq:perturbed_pencil} are denoted by
\begin{equation} \label{eq:blocksofdeltaL}
 \Delta \mathcal{L}(\lambda) =
 \left[ \begin{array}{c|c}
\Delta \mathcal{L}_{11}(\lambda) & \Delta \mathcal{L}_{12}(\lambda) \\ \hline
\Delta \mathcal{L}_{21}(\lambda) & \Delta \mathcal{L}_{22}(\lambda)
\end{array} \right] =:
\left[\begin{array}{c|c}
\lambda \Delta B_{11}+\Delta A_{11} & \lambda \Delta B_{12}+ \Delta A_{12} \\ \hline
\lambda \Delta B_{21}+\Delta A_{21} & \lambda \Delta B_{22}+\Delta A_{22}
\end{array}\right] \, .
\end{equation}
According to Remark \ref{rem:2empty}, we assume that $\e \ne 0$ and $\eta \ne 0$ throughout this subsection.

The main result of this subsection is Theorem \ref{thm:finalofstep1} and the starting point is the trivial Lemma \ref{lem:trivial}, which follows from elementary matrix operations applied to the lower-right block in \eqref{eq:constant_reduction_big}.

\begin{lemma} \label{lem:trivial} There exist constant matrices $C\in\FF^{\e\coldim \times (\eta+1)\rowdim}$ and  $D\in\FF^{(\e+1)\coldim\times \eta\rowdim}$ satisfying
\eqref{eq:constant_reduction_big} if and only if
\begin{equation}\label{eq:gen_sylv2}
   \left[
    \begin{array}{cc}
        C&I_{\e\coldim}
    \end{array}
  \right]
  (\EL{}+\DEL{})
  \left[
    \begin{array}{c}
      D\\
      I_{\eta\rowdim}
    \end{array}
  \right]
  =0\>.
\end{equation}
Moreover, with the notation introduced in \eqref{eq:bases_definitions} and \eqref{eq:blocksofdeltaL}, the equation \eqref{eq:gen_sylv2} is equivalent to the following system of quadratic Sylvester-like matrix equations
\begin{equation}\label{eq:gen_sylv}
  \left\{
    \begin{array}{l}
      C (E_{\eta\rowdim}^{T} - \Delta A_{12} )+(E_{\e\coldim} - \Delta A_{21})D= \Delta A_{22}+C(M_0+\Delta A_{11})D\\
      C (F_{\eta\rowdim}^{T} + \Delta B_{12})+(F_{\e\coldim} + \Delta B_{21})D=-\Delta B_{22}-C(M_1+\Delta B_{11})D
    \end{array}
  \right.\> ,
\end{equation}
for the unknown matrices $C$ and $D$.
\end{lemma}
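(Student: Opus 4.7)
The plan is to prove both equivalences claimed by the lemma through direct block-matrix computations, treating \eqref{eq:constant_reduction_big}--\eqref{eq:gen_sylv} as a pair of algebraic identities that must be unpacked carefully.

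For the first equivalence, between the existence of $C,D$ satisfying \eqref{eq:constant_reduction_big} and the matrix equation \eqref{eq:gen_sylv2}, I would simply carry out the block multiplication on the left-hand side of \eqref{eq:constant_reduction_big}. The two outer (unit block-triangular) factors are deliberately chosen so that the $(1,1)$, $(1,2)$, and $(2,1)$ blocks of the product can be renamed as $\la M_1 + M_0 + \DEL{11}$, $L_\eta(\la)^T\otimes I_m + \Delta \widetilde{\mathcal{L}}_{12}(\la)$, and $L_\e(\la)\otimes I_n + \Delta \widetilde{\mathcal{L}}_{21}(\la)$, respectively---these three blocks impose no constraint on $C,D$, they merely define the tilded perturbations. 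The only genuine constraint comes from the $(2,2)$ block, which must equal zero. A direct calculation shows that this $(2,2)$ block is exactly
\[
\begin{bmatrix} C & I_{\e n}\end{bmatrix} (\EL{}+\DEL{}) \begin{bmatrix} D \\ I_{\eta m}\end{bmatrix},
\]
which is the left-hand side of \eqref{eq:gen_sylv2}. This establishes the first equivalence.

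For the second equivalence, between \eqref{eq:gen_sylv2} and the system \eqref{eq:gen_sylv}, I would substitute the explicit decompositions $L_\e(\la)\otimes I_n = -E_{\e n} + \la F_{\e n}$ and $L_\eta(\la)^T\otimes I_m = -E_{\eta m}^T + \la F_{\eta m}^T$ (obtained from \eqref{eq:bases_definitions} together with the Kronecker identity $(A\otimes B)^T = A^T\otimes B^T$), and $\Delta \mathcal{L}_{ij}(\la) = \la\Delta B_{ij} + \Delta A_{ij}$ from \eqref{eq:blocksofdeltaL}, into the expression $[C\ \ I]\,(\EL{}+\DEL{})\,[D\ \ I]^T$. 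Since $C$ and $D$ are constant, the resulting expression is a polynomial of degree at most $1$ in $\la$, and it vanishes if and only if its constant and linear coefficients both vanish. Collecting the $\la^0$ terms yields the first equation of \eqref{eq:gen_sylv}, and collecting the $\la^1$ terms yields the second, after moving the quadratic terms $C(M_0+\Delta A_{11})D$ and $C(M_1+\Delta B_{11})D$ together with $\Delta A_{22}$ and $\Delta B_{22}$ to the right-hand side.

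There is no conceptual obstacle; the whole argument is an exercise in block arithmetic. The only thing requiring real care is the sign convention implicit in $L_k(\la) = \la F_k - E_k$ (note the minus sign in front of the $-1$ entries in \eqref{eq:Lk}) and the bookkeeping of Kronecker transposes needed to justify $E_{\eta m}^T = E_\eta^T \otimes I_m$ and similarly for $F_{\eta m}^T$, so that the compact notation of \eqref{eq:gen_sylv} correctly matches the expansion of \eqref{eq:gen_sylv2}.
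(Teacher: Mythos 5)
Your proposal is correct and matches the paper's treatment: the paper dismisses this lemma as trivial, noting only that it "follows from elementary matrix operations applied to the lower-right block in \eqref{eq:constant_reduction_big}," which is exactly the computation you carry out (the $(2,2)$ block of the strict equivalence equals $[C \;\; I_{\e n}](\EL{}+\DEL{})[D^T \;\; I_{\eta m}]^T$, and splitting that degree-one polynomial identity into its $\la^0$ and $\la^1$ coefficients yields the two equations of \eqref{eq:gen_sylv}). Your sign and Kronecker-transpose bookkeeping is also consistent with $L_k(\la)\otimes I_\ell = \la F_{k\ell}-E_{k\ell}$ as defined in \eqref{eq:bases_definitions}.
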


The system of matrix equations \eqref{eq:gen_sylv} is equivalent to a system of $2\e\eta\coldim\rowdim$ quadra\-tic scalar equations in the $2\e\eta\coldim\rowdim+(\e+\eta)\rowdim\coldim$ unknown entries of $C$ and $D$. Therefore, \eqref{eq:gen_sylv} is an underdetermined system of equations that may have infinitely many solutions.
Our aim is to establish conditions on $\|\Delta \EL{}\|_F$ that guarantee the existence of a solution $(C,D)$ to \eqref{eq:gen_sylv} with $\|(C,D)\|_F \lesssim \|\Delta \EL{}\|_F$, where the norm $\|(C,D)\|_F$ was defined in \eqref{eq:defnormpair}. This is done in Theorem \ref{thm:gen_sylvester_solution}, whose proof follows that of Stewart \cite[Theorem 5.1]{doi:10.1137/0709056} (see also \cite[Theorem 2.11, p. 242]{stewartsunbook} for a more general and more accesible result) and is based on a fixed point iteration argument. However, we emphasize that the result by Stewart is valid only for certain nonlinear matrix equations having a unique solution, while in our case there may be infinitely many solutions.

The solution of \eqref{eq:gen_sylv} relies upon first solving the system of linear Sylvester equations obtained by removing the quadratic terms in $C$ and $D$ of \eqref{eq:gen_sylv}. Such a system is:
\begin{equation}\label{eq:gen_sylv_linear}
  \left\{
    \begin{array}{l}
      C (E_{\eta\rowdim}^{T} -\Delta A_{12})+(E_{\e\coldim} - \Delta A_{21})D=\Delta A_{22}\\
      C (F_{\eta\rowdim}^{T} + \Delta B_{12})+(F_{\e\coldim} + \Delta B_{21} )D=-\Delta B_{22}
    \end{array}
  \right.\> ,
\end{equation}
which is equivalent to the underdetermined standard linear system $(T+\Delta T)x=b$ given by
\begin{align}
\nonumber
  &\left(
    \underbrace{\left[
      \begin{array}{c|c}
        E_{\eta\rowdim}\otimes I_{\e\coldim} &I_{\eta \rowdim}\otimes E_{\e\coldim}\\\hline
        F_{\eta\rowdim}\otimes I_{\e\coldim}&I_{\eta \rowdim}\otimes F_{\e\coldim}
      \end{array}
    \right]}_{=: T}
    + \underbrace{
    \left[
      \begin{array}{c|c}
        -\Delta A_{12}^{T}\otimes I_{\e\coldim} &-I_{\eta \rowdim}\otimes \Delta A_{21}\\\hline \\[-2.2 ex]
        \Delta B_{12}^{T}\otimes I_{\e\coldim}&I_{\eta \rowdim}\otimes \Delta B_{21}
      \end{array}
    \right]}_{=: \Delta T}
    \right)
    \underbrace{\left[
    \begin{array}{c}
      \mathrm{vec}(C)\\\hline
      \mathrm{vec}(D)
    \end{array}
  \right]}_{=:x }\\ \label{eq:linear_operator_equation}
  &\phantom{aaaa} =
  \underbrace{\left[
    \begin{array}{c}
      \mathrm{vec}(\Delta A_{22})\\\hline
      -\mathrm{vec}(\Delta B_{22})
    \end{array}
  \right]}_{=:b}
  \>,
\end{align}
where, for any $m\times n$ matrix $M=\left[m_{ij}\right]$, the column vector ${\rm vec} (M)$ is the {\em vectorization} of $M$, namely, $\mbox{\rm vec}(M): = [ m_{11}\,\hdots\, m_{m1}\, m_{12}\, \hdots\, m_{m2}\, \hdots\, m_{1n}\,\hdots\, m_{mn}]^T$ (see Horn and Johnson \cite[Def. 4.2.9]{Horn}, for instance). For brevity, and with an abuse of notation we use expressions such as ``$(C,D)$ is a solution of \eqref{eq:linear_operator_equation}''.

Lemma \ref{lemma_min_singular_value} proves that the matrix $T$ in \eqref{eq:linear_operator_equation} has full row rank and provides an expression for its minimum singular value.
This implies that if $\|\Delta T\|_2$ is small enough, then $T+ \Delta T$ has also full row rank and the linear system \eqref{eq:linear_operator_equation} is consistent, as well as the equivalent system of matrix equations \eqref{eq:gen_sylv_linear}. The proof of Lemma \ref{lemma_min_singular_value} is long and can be found in Appendix \ref{sec:proof-sigmamin}.
Here and in the rest of the paper the {\em minimum singular value} of any matrix $M$ is denoted by $\sigma_{\rm min}(M)$.

\begin{lemma}\label{lemma_min_singular_value} The matrix $T$ in \eqref{eq:linear_operator_equation} has full row rank and its minimum singular value is given by
  \begin{equation}\label{eq:smallest_singvalue}
    \sigma_{\rm min}(T)=
    \begin{cases}
      2\sin{\frac{\pi}{4\min{(\eta,\e)}+2}},& \e\neq \eta\\
      2\sin{\frac{\pi}{4\eta}},& \e=\eta
    \end{cases} \, .
  \end{equation}
\end{lemma}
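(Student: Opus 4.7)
The plan is to compute $\sigma_{\min}(T)$ by reducing via block and Kronecker structure to the spectral analysis of a small tridiagonal matrix. First I would expand $TT^T$: using $E_{k\ell}E_{k\ell}^T = F_{k\ell}F_{k\ell}^T = I_{k\ell}$, a direct block calculation yields
\[
TT^T = 2\,I + \begin{bmatrix}0 & X \\ X^T & 0\end{bmatrix},\qquad X := E_{\eta m}F_{\eta m}^T\otimes I_{\e n} + I_{\eta m}\otimes E_{\e n}F_{\e n}^T.
\]
Since the symmetric off-diagonal block has eigenvalues $\pm\sigma_i(X)$, one gets $\sigma_{\min}(T)^2 = 2-\sigma_{\max}(X)$ once this quantity is shown to be positive; the full-row-rank claim will come for free. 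Observing that $E_k F_k^T$ equals the $k\times k$ subdiagonal shift $N_k$, $(N_k)_{ij}=\delta_{i,j+1}$, and applying the perfect-shuffle permutation that swaps the $I_m$ and $I_\e$ factors, one finds that $X$ is orthogonally similar to $Y\otimes I_{mn}$, where
\[
Y := N_\eta\otimes I_\e + I_\eta\otimes N_\e \in \FF^{\e\eta\times \e\eta}.
\]
Hence $\sigma_{\max}(X)=\sigma_{\max}(Y)$ and the parameters $m,n$ disappear from the problem.

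To compute $\sigma_{\max}(Y)$ I would identify $\FF^\eta\otimes\FF^\e$ with the truncated polynomial ring $\FF[x,y]/(x^\eta,y^\e)$ via $e_i\otimes e_j \leftrightarrow x^{i-1}y^{j-1}$, under which $Y$ acts as multiplication by $x+y$. The antidiagonal subspaces $D_n := \mathrm{span}\{x^{i-1}y^{j-1}:i+j=n\}$ are orthogonal for the monomial inner product, $Y(D_n)\subseteq D_{n+1}$, and $Y^T(D_{n+1})\subseteq D_n$, so $Y^TY$ is block-diagonal across the antidiagonals and $\lambda_{\max}(Y^TY)=\max_n \lambda_{\max}(Y^TY|_{D_n})$. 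A direct expansion shows that $Y^TY|_{D_n}$ is a symmetric tridiagonal matrix with unit off-diagonal entries and diagonal entry $[i<\eta]+[j<\e]$ at the lattice point $(i,n-i)$. Cataloguing how the antidiagonal meets the boundary of $\{1,\ldots,\eta\}\times\{1,\ldots,\e\}$ gives three tridiagonal patterns, all of size at most $\mu := \min(\e,\eta)$: the interior pattern $(2,\ldots,2)$ with sizes $L\le \mu-1$, the singly truncated pattern $(2,\ldots,2,1)$ with size $L=\mu$ (present only when $\e\neq \eta$), and the doubly truncated pattern $(1,2,\ldots,2,1)$ with sizes $L\le\mu$.

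The eigenvalues of each pattern are accessible by a sine-type ansatz: $v_k=\sin(k\alpha)$ with $v_{L+1}=0$ for the interior case, $v_k=\sin(k\alpha)$ with $v_{L+1}=-v_L$ for the singly truncated case, and the half-integer ansatz $v_k=\sin((k-\tfrac12)\alpha)$ with $v_{L+1}=-v_L$ for the doubly truncated case. Using $2+2\cos\theta=4\cos^2(\theta/2)$, the respective maxima at the largest allowed $L$ come out to
\[
4\cos^2\!\tfrac{\pi}{2\mu},\qquad 4\cos^2\!\tfrac{\pi}{2\mu+1},\qquad 4\cos^2\!\tfrac{\pi}{2\mu}.
\]
The monotonicity $\cos(\pi/(2\mu+1))>\cos(\pi/(2\mu))$ then forces the dichotomy: when $\e\neq\eta$ the singly truncated case wins, giving $\sigma_{\max}(Y)=2\cos(\pi/(2\mu+1))$, whereas when $\e=\eta$ that case is absent and the other two tie at $\sigma_{\max}(Y)=2\cos(\pi/(2\mu))$. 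Substituting into $\sigma_{\min}(T)^2=2-\sigma_{\max}(Y)$ and using $2-2\cos\theta=4\sin^2(\theta/2)$ recovers exactly \eqref{eq:smallest_singvalue}; positivity of the right-hand side gives the full row rank of $T$.

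The main obstacle is the singly truncated eigenvalue problem: the asymmetric boundary at the ``$1$'' breaks the standard sine basis that diagonalises the purely interior tridiagonal, and one must discover the non-standard endpoint equation $2\sin((L+\tfrac12)\alpha)\cos(\alpha/2)=0$, whose denominator $2L+1$ is what ultimately drives the $\e\neq\eta$ branch of~\eqref{eq:smallest_singvalue}. Tracking which antidiagonals fall into which of the three tridiagonal patterns---in particular at the transitions $n=\mu$ and $n=\mu+1$, where the pattern switches from interior to (singly or doubly) truncated---is the most delicate bookkeeping, and it is also the precise place where the $\e=\eta$ versus $\e\neq\eta$ dichotomy is born.
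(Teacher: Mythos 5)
Your proposal is correct and follows essentially the same route as the paper's Appendix B: form $TT^T=2I+\left[\begin{smallmatrix}0&X\\X^T&0\end{smallmatrix}\right]$, strip the Kronecker identity factors by a perfect shuffle so that $\sigma_{\min}(T)^2=2-\sigma_{\max}(W_{\e,\eta})$, decompose into small tridiagonal blocks, and extract the extreme eigenvalues from the same trigonometric equation (your endpoint condition $2\sin((L+\tfrac12)\alpha)\cos(\alpha/2)=0$ is exactly the paper's Chebyshev identity $U_\eta(\mu)+U_{\eta-1}(\mu)=0$). The only real difference is presentational: where the paper proves the direct-sum decomposition of $W_{\e,\eta}$ into $M_k$, $G_k$, $G_k^T$ blocks by an explicit (and admittedly technical) permutation/induction argument, you obtain the same blocks more transparently from the antidiagonal grading of multiplication by $x+y$ on $\FF[x,y]/(x^\eta,y^\e)$, which is a cleaner derivation of the same decomposition rather than a different one.
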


The following simple lower bound on $\sigma_{\min} (T)$ is useful to get bounds that can be easily handled and are related to the grade of the original matrix polynomial.
\begin{corollary} \label{cor:boundonsigmamin} Let $T$ be the matrix in \eqref{eq:linear_operator_equation} and $d = \e + \eta + 1$. Then
\[
\sigma_{\rm min}(T) \geq \frac{2\sqrt{2}}{d} \,.
\]
\end{corollary}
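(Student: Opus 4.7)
The plan is to apply a simple lower bound for $\sin x$ on small arguments directly to the closed-form expression for $\sigma_{\min}(T)$ given by Lemma \ref{lemma_min_singular_value}. Specifically, since $\sin x / x$ is decreasing on $(0, \pi/2]$, one has $\sin x \geq (2\sqrt{2}/\pi)\, x$ on $[0, \pi/4]$, because $\sin(\pi/4)/(\pi/4) = 2\sqrt{2}/\pi$.

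Next, I would observe that throughout Subsection \ref{sec:firststep} we work under the standing assumption $\e, \eta \geq 1$ (this is explicitly stated after Remark \ref{rem:2empty}). Consequently, in both branches of formula \eqref{eq:smallest_singvalue} the argument of the sine lies in $(0, \pi/4]$, so the inequality of the previous paragraph is applicable.

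I would then split into the two cases of \eqref{eq:smallest_singvalue}. If $\e \neq \eta$, setting $s := \min(\e,\eta) \geq 1$ gives
\[
\sigma_{\min}(T) = 2\sin\frac{\pi}{4s+2} \geq \frac{4\sqrt{2}}{4s+2} = \frac{2\sqrt{2}}{2s+1},
\]
and since $\max(\e,\eta) \geq s+1$ one has $d = \e + \eta + 1 \geq 2s+2 > 2s+1$, yielding $\sigma_{\min}(T) \geq 2\sqrt{2}/d$. If instead $\e = \eta$, then $d = 2\eta + 1$ and the same trick gives $\sigma_{\min}(T) \geq 2 \cdot (2\sqrt{2}/\pi) \cdot (\pi/4\eta) = \sqrt{2}/\eta$, which is $\geq 2\sqrt{2}/(2\eta + 1) = 2\sqrt{2}/d$ because $2\eta + 1 \geq 2\eta$.

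There is no genuine obstacle here: the corollary is a routine calibration of Lemma \ref{lemma_min_singular_value} into a form whose dependence on the grade $d$ is transparent, and the only mild subtlety is that the case $\e = \eta$ is the tighter of the two, so one must check that the elementary $\sin$-inequality is sharp enough to cover it (which it is, with the constant $2\sqrt{2}/\pi$ arising from anchoring at $x = \pi/4$).
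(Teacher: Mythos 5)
Your proof is correct and uses exactly the same key inequality as the paper, namely $\sin x \ge (2\sqrt{2}/\pi)\,x$ on $[0,\pi/4]$ applied to the closed form of Lemma \ref{lemma_min_singular_value}; you merely spell out the two cases of \eqref{eq:smallest_singvalue} that the paper leaves implicit. No further comment is needed.
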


\begin{proof} It follows from \eqref{eq:smallest_singvalue} and the inequality $\sin(x)\ge 2\sqrt{2}x/\pi$ for $0\le x \le \pi/4$.
\end{proof}

Lemma \ref{lemma:linear_bound} bounds the norm of the minimum 2-norm solution of \eqref{eq:linear_operator_equation} or, equivalently, of the minimum Frobenius norm solution of the matrix equation \eqref{eq:gen_sylv_linear}, since $\|[\mathrm{vec}(C)^T, \mathrm{vec}(D)^T ]^T \|_2 \allowbreak = \|(C,D) \|_F$.

\begin{lemma}\label{lemma:linear_bound}
Let $(T+\Delta T)x= b$ be the underdetermined linear system  \eqref{eq:linear_operator_equation}, and let us assume that $\sigma_{\min}(T)>\|\Delta T\|_2$.
Then $(T+\Delta T)x= b$ is consistent and its minimum norm solution $(C_0, D_0)$ satisfies
  \begin{equation}\label{eq:bound_CD}
    \|(C_0,D_0)\|_F\leq \frac{1}{\delta} \|(\Delta A_{22},\Delta B_{22})\|_F,
  \end{equation}
 where $ \delta:=\sigma_{\rm min}(T)-\|\Delta T\|_2$.
\end{lemma}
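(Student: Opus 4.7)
The plan is to reduce everything to a standard fact about minimum-norm solutions of consistent underdetermined linear systems, namely, that if $A$ has full row rank, then the minimum 2-norm solution of $Ax = b$ is $x = A^\dagger b$ and satisfies $\|x\|_2 \le \|b\|_2/\sigma_{\min}(A)$. The only work is to verify the hypotheses (full row rank and consistency of $(T+\Delta T)x=b$) and to translate between the vectorized norm and the pair norm $\|(\cdot,\cdot)\|_F$ defined in \eqref{eq:defnormpair}.

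First, I would invoke Weyl's perturbation inequality for singular values (equivalently, the Lipschitz continuity of singular values in the spectral norm) to conclude
\[
\sigma_{\min}(T+\Delta T) \;\ge\; \sigma_{\min}(T) - \|\Delta T\|_2 \;=\; \delta \;>\; 0,
\]
where positivity comes from the standing assumption $\sigma_{\min}(T) > \|\Delta T\|_2$. Since $T$ is a matrix with more columns than rows (by Lemma \ref{lemma_min_singular_value} it has full row rank), and since $T + \Delta T$ has the same shape and strictly positive smallest singular value, $T + \Delta T$ also has full row rank. This immediately implies that the linear system $(T+\Delta T)x = b$ is consistent for every right-hand side $b$, in particular for the one arising from $\Delta A_{22}, \Delta B_{22}$.

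Second, I would use the explicit formula for the minimum 2-norm solution: because $T+\Delta T$ has full row rank, the minimum 2-norm solution is
\[
x_0 \;=\; (T+\Delta T)^\dagger b \;=\; (T+\Delta T)^T \bigl((T+\Delta T)(T+\Delta T)^T\bigr)^{-1} b,
\]
and $\|(T+\Delta T)^\dagger\|_2 = 1/\sigma_{\min}(T+\Delta T)$. Therefore
\[
\|x_0\|_2 \;\le\; \frac{\|b\|_2}{\sigma_{\min}(T+\Delta T)} \;\le\; \frac{\|b\|_2}{\delta}.
\]

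Finally, I would translate this inequality back into the notation of the lemma. Writing $x_0 = [\mathrm{vec}(C_0)^T,\,\mathrm{vec}(D_0)^T]^T$, the identity $\|\mathrm{vec}(M)\|_2 = \|M\|_F$ gives
\[
\|x_0\|_2 \;=\; \sqrt{\|C_0\|_F^2+\|D_0\|_F^2} \;=\; \|(C_0,D_0)\|_F,
\]
and analogously $\|b\|_2 = \|(\Delta A_{22}, \Delta B_{22})\|_F$ (the sign in the second block of $b$ is irrelevant for the 2-norm). Substituting yields \eqref{eq:bound_CD}.

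The proof is essentially routine once Lemma \ref{lemma_min_singular_value} is in hand; the only place requiring a little care is the perturbation step $\sigma_{\min}(T+\Delta T) \ge \sigma_{\min}(T) - \|\Delta T\|_2$, which should be explicitly cited, and the verification that the minimum-norm solution of the vectorized system coincides with the minimum-Frobenius-norm solution of the matrix equation \eqref{eq:gen_sylv_linear}, which is immediate from the isometry between $(C,D)$ and its vectorization. No obstacle of real substance is expected.
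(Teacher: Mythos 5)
Your proposal is correct and follows essentially the same route as the paper: Weyl's inequality gives $\sigma_{\min}(T+\Delta T)\ge\delta>0$, hence full row rank and consistency, then the bound $\|(T+\Delta T)^\dagger\|_2=1/\sigma_{\min}(T+\Delta T)\le 1/\delta$ applied to the minimum-norm solution $(T+\Delta T)^\dagger b$, together with the isometry between the vectorized $2$-norm and the pair norm $\|(\cdot,\cdot)\|_F$. No differences of substance.
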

\begin{proof}
From Weyl's perturbation theorem for singular values \cite[Theorem 3.3.16]{Horn}, we get $\sigma_{\rm min}(T+\Delta T)\geq \sigma_{\rm min}(T)-\|\Delta T\|_2>0$.
Therefore, $T+\Delta T$ has full row rank and the linear system \eqref{eq:linear_operator_equation} is consistent. Its minimum norm solution, $(C_0, D_0)$, is given by $(T+\Delta T)^\dagger b$, where $(T+\Delta T)^\dagger$ denotes the Moore-Penrose pseudoinverse of $T+\Delta T$.
Then,
  \begin{align*}
    \|(C_0,D_0)\|_F \leq &\|(T+\Delta T)^\dagger \|_2\|(\Delta A_{22},\Delta B_{22})\|_F = \frac{1}{\sigma_{\rm min}(T+\Delta T)}\|(\Delta A_{22},\Delta B_{22})\|_F \\
   \leq &  \frac{1}{\sigma_{\rm min}(T)-\|\Delta T\|_2}\|(\Delta A_{22},\Delta B_{22})\|_F.
  \end{align*}
\end{proof}

From Lemma \ref{lemma:linear_bound}, it is clear that the quantity $\delta=\sigma_{\rm min}(T)-\|\Delta T\|_2$ will play a relevant role in our analysis. Therefore, we establish a tractable lower bound on $\delta$.

\begin{lemma} \label{lemm:deltabound} Let $T$ and $\Delta T$ be the matrices in \eqref{eq:linear_operator_equation}, let $\Delta \EL{}$ be the pencil in \eqref{eq:blocksofdeltaL}, and $d = \e + \eta + 1$. If $\|\Delta \EL{}\|_F < 1/d$, then
\[
\sigma_{\rm min}(T)-\|\Delta T\|_2 \geq \frac{2(\sqrt{2}-1)}{d} >0 \, .
\]
\end{lemma}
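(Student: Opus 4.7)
The plan is to combine Corollary \ref{cor:boundonsigmamin}, which lower bounds $\sigma_{\rm min}(T)$ by $2\sqrt{2}/d$, with an upper bound on $\|\Delta T\|_2$ of the form $\|\Delta T\|_2 \leq \|\Delta \EL{}\|_F$. Once this second inequality is in hand, the hypothesis $\|\Delta \EL{}\|_F < 1/d$ immediately yields $\sigma_{\rm min}(T) - \|\Delta T\|_2 > (2\sqrt{2}-1)/d > 2(\sqrt{2}-1)/d$, which is the stated bound.

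The heart of the argument is the estimate of $\|\Delta T\|_2$, and the key is that each block of $\Delta T$ is a Kronecker product of one of $\Delta A_{12}, \Delta A_{21}, \Delta B_{12}, \Delta B_{21}$ with an identity matrix. The identity $\|A\otimes B\|_2 = \|A\|_2 \|B\|_2$ (see \cite[Theorem 4.2.15]{Horn}) therefore gives, for example, $\|\Delta A_{12}^T \otimes I_{\e n}\|_2 = \|\Delta A_{12}\|_2$, and analogously for the other three blocks. A naive Frobenius bound would instead introduce spurious factors such as $\sqrt{\e n}$ coming from $\|I_{\e n}\|_F$, so preserving the $2$-norm on the blocks is essential.

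Next I would invoke the standard block-matrix inequality $\bigl\|[\,X\ Y\,;\,Z\ W\,]\bigr\|_2^2 \leq \|X\|_2^2 + \|Y\|_2^2 + \|Z\|_2^2 + \|W\|_2^2$ (which follows from the fact that for a block column $\|[X;Z]\|_2^2 = \lambda_{\max}(X^*X + Z^*Z) \leq \|X\|_2^2 + \|Z\|_2^2$, and the analogous statement for block rows). Applied to $\Delta T$, this yields
\[
\|\Delta T\|_2^2 \leq \|\Delta A_{12}\|_2^2 + \|\Delta A_{21}\|_2^2 + \|\Delta B_{12}\|_2^2 + \|\Delta B_{21}\|_2^2.
\]
Replacing each 2-norm by the Frobenius norm and recalling from Definition \ref{def:norm} that $\|\Delta \EL{}_{12}\|_F^2 = \|\Delta A_{12}\|_F^2 + \|\Delta B_{12}\|_F^2$ and $\|\Delta \EL{}_{21}\|_F^2 = \|\Delta A_{21}\|_F^2 + \|\Delta B_{21}\|_F^2$, we obtain
\[
\|\Delta T\|_2 \leq \sqrt{\|\Delta \EL{}_{12}\|_F^2 + \|\Delta \EL{}_{21}\|_F^2} \leq \|\Delta \EL{}\|_F,
\]
where the last inequality holds because $\Delta \EL{}_{12}$ and $\Delta \EL{}_{21}$ are subpencils of $\Delta \EL{}$.

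Combining with Corollary \ref{cor:boundonsigmamin} and the hypothesis $\|\Delta \EL{}\|_F < 1/d$ gives
\[
\sigma_{\rm min}(T) - \|\Delta T\|_2 > \frac{2\sqrt{2}}{d} - \frac{1}{d} = \frac{2\sqrt{2}-1}{d} \geq \frac{2(\sqrt{2}-1)}{d} > 0,
\]
which proves the lemma. The only real obstacle is making sure the bound on $\|\Delta T\|_2$ is tight enough (i.e., independent of the block sizes $\e n$ and $\eta m$), which as explained above requires working directly with the spectral norm rather than passing through the Frobenius norm for the Kronecker factors.
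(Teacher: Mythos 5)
Your proof is correct and follows essentially the same route as the paper's: lower-bound $\sigma_{\rm min}(T)$ via Corollary \ref{cor:boundonsigmamin} and upper-bound $\|\Delta T\|_2$ by exploiting $\|A\otimes I\|_2=\|A\|_2$ together with elementary block-norm inequalities, then subtract. The only (harmless) difference is in the block bound: the paper splits $\Delta T$ by the triangle inequality into two block columns and obtains $\|\Delta T\|_2\le 2\|\Delta \EL{}\|_F$, which yields the constant $2(\sqrt{2}-1)$ exactly, whereas your sum-of-squares block inequality gives the sharper $\|\Delta T\|_2\le \|\Delta \EL{}\|_F$ and hence the stronger intermediate bound $(2\sqrt{2}-1)/d$, which of course still implies the stated one.
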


\begin{proof} Using standard properties of norms and Kronecker products \cite[Chapter 4]{Horn} (pay particular attention to \cite[p. 247, paragraph 1]{Horn}), we get
\begin{align*}
\|\Delta T\|_2 & \leq \left\| \left[
      \begin{array}{c}
        -\Delta A_{12}^{T}\otimes I_{\e\coldim} \\\hline \\[-2.2 ex]
        \Delta B_{12}^{T}\otimes I_{\e\coldim}
      \end{array}
    \right] \right\|_2 + \left\| \left[
      \begin{array}{c}
        -I_{\eta \rowdim}\otimes \Delta A_{21}\\\hline \\[-2.2 ex]
        I_{\eta \rowdim}\otimes \Delta B_{21}
      \end{array}
    \right] \right\|_2 \\
    & = \left\| \left[
      \begin{array}{c}
        -\Delta A_{12}^{T} \\\hline \\[-2.2 ex]
        \Delta B_{12}^{T}
      \end{array}
    \right] \right\|_2 + \left\| \left[
      \begin{array}{c}
        - \Delta A_{21}\\\hline \\[-2.2 ex]
         \Delta B_{21}
      \end{array}
    \right] \right\|_2
    \leq
    \left\| \left[
      \begin{array}{c}
        -\Delta A_{12}^{T} \\\hline \\[-2.2 ex]
        \Delta B_{12}^{T}
      \end{array}
    \right] \right\|_F + \left\| \left[
      \begin{array}{c}
        - \Delta A_{21}\\\hline \\[-2.2 ex]
         \Delta B_{21}
      \end{array}
    \right] \right\|_F  \\
    & \leq 2 \|\Delta \EL{}\|_F \,.
\end{align*}
From this inequality and Corollary \ref{cor:boundonsigmamin}, the result is obtained as follows:
\[
\sigma_{\rm min}(T)-\|\Delta T\|_2 \geq \! \frac{2\sqrt{2}}{d} -  2 \|\Delta \EL{}\|_F > \frac{2(\sqrt{2}-1)}{d}  .
\]
\end{proof}

Theorem \ref{thm:gen_sylvester_solution} is the key technical result of this section. It proves that the system of quadratic Sylvester-like matrix equations \eqref{eq:gen_sylv} has a solution $(C,D)$ such that $\|(C,D)\|_F \lesssim \|\Delta \EL{}\|_F$, whenever $\|\Delta \EL{}\|_F$ is properly upper bounded. As mentioned before, this theorem extends to underdetermined quadratic matrix equations results proved by Stewart for equations with a unique solution \cite[Theorem 5.1]{doi:10.1137/0709056}, \cite[Theorem 2.11, p. 242]{stewartsunbook}. The proof of Theorem \ref{thm:gen_sylvester_solution} follows those by Stewart.
\begin{theorem}\label{thm:gen_sylvester_solution}
  There exists a solution $(C,D)$ of the quadratic system of Sylvester-like matrix equations \eqref{eq:gen_sylv} satisfying
\begin{equation}\label{eq:norm_solution}
  \|(C,D)\|_F\leq 2\frac{\theta}{\delta}\>,
\end{equation}
whenever
\begin{equation}\label{eq:condition_convergence}
 \delta>0 \quad \mbox{and} \quad \frac{\theta\omega}{\delta^2}<\frac{1}{4}\>,
\end{equation}
where $\delta = \sigma_{\rm min}(T)-\|\Delta T\|_2$, $\theta :=\|(\Delta A_{22},\Delta B_{22})\|_F$, and $\omega:=\|(M_0+\Delta A_{11},M_1+\Delta B_{11})\|_F$.	
\end{theorem}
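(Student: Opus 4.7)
The plan is to reformulate \eqref{eq:gen_sylv} as a fixed point equation for a contraction map on a closed ball in the space of pairs $(C,D)\in\FF^{\e n\times(\eta+1)m}\times\FF^{(\e+1)n\times\eta m}$ equipped with the norm $\|(\cdot,\cdot)\|_F$, following the strategy of \cite[Theorem 5.1]{doi:10.1137/0709056}. The idea is to isolate the linear part of \eqref{eq:gen_sylv}, which is exactly the linear Sylvester-like system \eqref{eq:gen_sylv_linear} studied in Lemma \ref{lemma:linear_bound}, and absorb the quadratic terms $C(M_0+\Delta A_{11})D$ and $C(M_1+\Delta B_{11})D$ into a perturbation of the right-hand side that depends on the current iterate. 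Because $\delta>0$, Lemma \ref{lemma:linear_bound} guarantees that for any prescribed right-hand side the linear problem is consistent and admits a unique minimum-norm solution, so the fixed point map is well defined.

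Concretely, I would define the map $\Phi:(C,D)\mapsto (C',D')$ where $(C',D')$ is the minimum Frobenius norm solution of the linear system obtained from \eqref{eq:gen_sylv} by replacing the right-hand sides $\Delta A_{22}+C(M_0+\Delta A_{11})D$ and $-\Delta B_{22}-C(M_1+\Delta B_{11})D$ with the constants produced by the current $(C,D)$. Taking the initial iterate $(C_0,D_0)$ to be the minimum-norm solution of \eqref{eq:gen_sylv_linear} furnished by Lemma \ref{lemma:linear_bound}, one then analyses the orbit of $\Phi$ on the closed ball $\mathcal{B}_r=\{(C,D):\|(C,D)\|_F\le r\}$ with $r:=2\theta/\delta$.

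Two estimates are required. First, invariance: if $(C,D)\in\mathcal{B}_r$, then using the submultiplicativity bound $\|CXD\|_F\le\|C\|_F\|X\|_2\|D\|_F$ one shows
\[
\left\|\bigl(C(M_0+\Delta A_{11})D,\,C(M_1+\Delta B_{11})D\bigr)\right\|_F\le \omega\,\|C\|_F\|D\|_F\le \tfrac{\omega}{2}\|(C,D)\|_F^2,
\]
and Lemma \ref{lemma:linear_bound} applied to the perturbed right-hand side gives $\|\Phi(C,D)\|_F\le (\theta+\tfrac{\omega}{2}r^2)/\delta$, which is $\le r$ precisely when $\theta\omega/\delta^2\le 1/2$, a condition implied by \eqref{eq:condition_convergence}. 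Second, contraction: for $(C,D),(C',D')\in\mathcal{B}_r$ one uses the bilinear identity
\[
CXD-C'XD' = (C-C')XD + C'X(D-D')
\]
to obtain $\|\Phi(C,D)-\Phi(C',D')\|_F\le (2r\omega/\delta)\,\|(C-C',D-D')\|_F$, and $2r\omega/\delta = 4\theta\omega/\delta^2 < 1$ by \eqref{eq:condition_convergence}. Thus $\Phi$ is a contraction on $\mathcal{B}_r$, and the Banach fixed point theorem produces a solution $(C,D)$ of \eqref{eq:gen_sylv} obeying \eqref{eq:norm_solution}.

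The main obstacle is calibrating the constants correctly: the quadratic coupling forces a quadratic growth in $r$ on the invariance side and a linear growth in $r$ on the Lipschitz side, so the single hypothesis $\theta\omega/\delta^2<1/4$ must be shown to control both simultaneously. This is delicate because of the pair structure—$\omega$ is the joint Frobenius norm of the two blocks $M_0+\Delta A_{11}$ and $M_1+\Delta B_{11}$, not each one separately—so I must be careful to combine $\|C\cdot D\|_F$ on two equations by a single Pythagorean step, ensuring the constant in the quadratic estimate is the right one. Once this bookkeeping is performed, the contraction constant $4\theta\omega/\delta^2$ is strictly less than $1$ exactly under \eqref{eq:condition_convergence}, and the norm bound $\|(C,D)\|_F\le 2\theta/\delta$ follows directly from invariance of the ball.
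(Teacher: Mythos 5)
Your proposal is correct and follows essentially the same route as the paper: both arguments iterate the minimum-norm solution operator of the linear system \eqref{eq:gen_sylv_linear} (made available by Lemma \ref{lemma:linear_bound} since $\delta>0$) with the quadratic terms fed back into the right-hand side, in the spirit of Stewart. The only difference is packaging --- you invoke the Banach fixed point theorem on the closed ball of radius $2\theta/\delta$, checking invariance and a Lipschitz constant $4\theta\omega/\delta^2<1$, whereas the paper runs the same iteration explicitly and proves boundedness via the scalar recursion $\kappa_{i+1}=\kappa_1(1+\kappa_i)^2$ followed by a Cauchy-sequence estimate; your constants (including the factor $\omega/2$ in the quadratic bound) check out under the hypothesis \eqref{eq:condition_convergence}.
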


\begin{proof} Lemma \ref{lemma:linear_bound} and the hypothesis $\delta >0$ guarantee that the linear system of matrix equations \eqref{eq:gen_sylv_linear} is consistent, and, even more, that is consistent for any right-hand side.
Let the minimum norm solution of \eqref{eq:gen_sylv_linear} be denoted by $(C_0,D_0)$. It satisfies
\[
    \|(C_0,D_0)\|_F\leq \frac{1}{\delta}\|(\Delta A_{22}, \Delta B_{22})\|_F = \frac{\theta}{\delta}=:\rho_0,
\]
according to Lemma \ref{lemma:linear_bound}. Then, let us define a sequence $\{(C_i,D_i)\}_{i=0}^\infty$ of pairs of matrices as follows: for $i > 0$ the pair $(C_i, D_i)$ is the minimum norm solution of
  \begin{align}\label{eq:iteration}
\left\{\begin{array}{l}    C_{i}(E_{\eta\rowdim}^{T}-\Delta A_{12})+(E_{\e\coldim} - \Delta A_{21})D_{i}=\Delta A_{22}+C_{i-1}(M_0+\Delta A_{11})D_{i-1}\\
    C_{i}(F_{\eta\rowdim}^{T}+\Delta B_{12})+(F_{\e\coldim}+\Delta B_{21})D_{i}= -\Delta B_{22}-C_{i-1}(M_1+\Delta B_{11})D_{i-1}\end{array}\right. \, .
  \end{align}
 Therefore, vectorizing \eqref{eq:iteration} and using the matrix $T + \Delta T$ defined in \eqref{eq:linear_operator_equation}, we get
\begin{equation} \label{eq:iteration2}
    \left[
      \begin{array}{c}
        \mathrm{vec}(C_{i})\\
        \mathrm{vec}(D_{i})
      \end{array}
    \right]
    =
    \left[
      \begin{array}{c}
        \mathrm{vec} (C_0)\\
        \mathrm{vec} (D_0)
      \end{array}
    \right]
    +(T+\Delta T)^\dagger
    \left(
      \left[
        \begin{array}{c}
          \mathrm{vec}(C_{i-1}(M_0+\Delta A_{11})D_{i-1})\\
          -\mathrm{vec}(C_{i-1}(M_1+\Delta B_{11})D_{i-1})
        \end{array}
      \right]
    \right)\>.
\end{equation}
We claim that the sequence $\{(C_i,D_i)\}_{i=0}^\infty$ converges to a solution $(C,D)$ of \eqref{eq:gen_sylv}  satisfying \eqref{eq:norm_solution}.
To prove this, we first show that the sequence  $\{\|(C_i,D_i)\|_F\}_{i=0}^\infty$ is a bounded sequence.
If $\|(C_{i-1},D_{i-1})\|_F \leq \rho_{i-1}$, then we have from \eqref{eq:iteration2} that
  \begin{align*}
    \|(C_{i},D_{i})\|_F\nonumber \leq  &\|(C_0,D_0)\|_F \\
     &+\|(T+\Delta T)^\dagger \|_2\|\|(C_{i-1},D_{i-1})\|_F^{2}\|(M_0+\Delta A_{11},M_1+\Delta B_{11})\|_F\nonumber\\
    \leq &\rho_0+\rho_{i-1}^2\omega\delta^{-1}=:\rho_{i}\>.
    \label{eq:recurrecne_rho_i_1}
  \end{align*}
We may write the quantity $\rho_{i}$ in the equation above as $\rho_{i}=\rho_0(1+\kappa_i)$, where $\kappa_i$ satisfies the recursion
\begin{equation} \label{eq:fixedstewart}
  \left\{ \begin{array}{l}
    \kappa_1=\rho_0\omega\delta^{-1}=\theta \omega \delta^{-2},\\
    \kappa_{i+1}=\kappa_1(1+\kappa_i)^2\>.
    \end{array}\right.
\end{equation}
An induction argument proves that $0 < \kappa_1 < \kappa_2 < \cdots$, i.e., that the sequence is strictly increasing. In addition, if $\kappa_1<1/4$, then the function $g(x) := \kappa_1 (1 + x)^2$, which defines the fixed point iteration in \eqref{eq:fixedstewart}, has two positive fixed points, one smaller than one and another larger, and satisfies $0 < g(x) < 1$ and $0 < g'(x) < 1$ for $0 < x < 1$. Therefore, standard results on fixed point iterations imply that $\lim_{i \rightarrow \infty} \kappa_i = \kappa$, where $\kappa$ is the smallest fixed point of $g(x)$, i.e.,
\[
\kappa =\lim_{i\rightarrow \infty} \kappa_i = \frac{2\kappa_1}{1-2\kappa_1+\sqrt{1-4\kappa_1}} < 1,
  \]
and $\kappa_i <\kappa$ for all $i\geq 1$.
Thus, the norms of the elements of the sequence $\{(C_i,D_i)\}_{i=0}^\infty$ are bounded as
  \begin{equation} \label{eq:bound_CD2}
    \|(C_{i},D_{i})\|_F\leq \rho :=\lim_{i\rightarrow \infty}{\rho_i}=\rho_0(1+\kappa)\>.
  \end{equation}

We now show that the sequence $\{ (C_i,D_i)\}_{i=0}^\infty$ converges provided that $2\delta^{-1}\omega \rho<1$, which is ensured by \eqref{eq:condition_convergence}.
For this purpose, let $S_i=(S_i^{(C)},S_i^{(D)}):=(C_{i+1}-C_{i},D_{i+1}-D_{i})$.
Then \eqref{eq:iteration2} implies
\begin{align*}
\|S_i\|_F & \leq \|(T+\Delta T)^\dagger\|_2 \,
\left\| \begin{bmatrix}
{\rm vec}\,\left( C_i(M_0+\Delta A_{11})D_i - C_{i-1}(M_0+\Delta A_{11})D_{i-1} \right) \\
{\rm vec}\,\left( C_i(M_1+\Delta B_{11})D_i - C_{i-1}(M_1+\Delta B_{11})D_{i-1} \right)
\end{bmatrix} \right\|_2 \\
&\leq \delta^{-1} \,
\left\| \begin{bmatrix}
{\rm vec}\,\left(S_{i-1}^{(C)}(M_0+\Delta A_{11})D_i + C_{i-1}(M_0+\Delta A_{11})S_{i-1}^{(D)} \right) \\
{\rm vec}\,\left( S_{i-1}^{(C)}(M_1+\Delta B_{11})D_i + C_{i-1}(M_1+\Delta B_{11})S_{i-1}^{(D)} \right)
\end{bmatrix} \right\|_2 \\
&\leq 2\delta^{-1}\omega \rho \|S_{i-1}\|_F.
\end{align*}
Therefore, the sequence $\{ (C_i,D_i)\}_{i=0}^\infty$ is a Cauchy sequence, since $2\delta^{-1}\omega \rho<1$, and must have a limit
$
(C,D):=\lim_{i\rightarrow \infty} (C_i,D_i).
$
Taking limits of both sides of \eqref{eq:iteration}, we get that $(C,D)$ is a solution of \eqref{eq:gen_sylv}. Finally, from \eqref{eq:bound_CD2}, $\|(C,D)\|_F \leq \rho_0(1+\kappa) < 2\rho_0 = 2\delta^{-1}\theta$, which concludes the proof.
\end{proof}

Theorem \ref{thm:finalofstep1} completes the first step of the backward error analysis. Its proof follows from Theorem \ref{thm:gen_sylvester_solution} and norm inequalites. The numerical constants appearing in Theorem \ref{thm:finalofstep1} are not optimal and have been chosen to keep the analysis and the bounds simple.

\begin{theorem} \label{thm:finalofstep1}
Let $\EL{}$ be an $(\e,n,\eta , m)$-block Kronecker pencil as in \eqref{eq:linearization_general}, let $\e + \eta + 1 = d$, and let $\Delta \EL{}$ be any pencil with the same size as $\EL{}$ and such that
\begin{equation} \label{eq:boundL1}
\|\Delta \EL{}\|_F < \left(\frac{(\sqrt{2}-1)}{d} \right)^2 \, \frac{1}{1 + \|\la M_1 + M_0\|_F}.
\end{equation}
Then, there exist matrices $C\in\FF^{\e\coldim \times (\eta+1)\rowdim}$ and  $D\in\FF^{(\e+1)\coldim\times \eta\rowdim}$ that satisfy
\begin{equation} \label{eq:easyboundCD}
\|(C,D)\|_F \leq \frac{d \|\Delta \EL{}\|_F}{(\sqrt{2}-1)} ,
\end{equation}
and the equality \eqref{eq:constant_reduction_big} with
\begin{align}
\nonumber
& \max\{\|\Delta \widetilde{\mathcal{L}}_{21}(\lambda)\|_{F} ,  \|\Delta \widetilde{\mathcal{L}}_{12}(\lambda)\|_{F} \}
\\
&\phantom{aaaa} \leq \|\Delta \EL{}\|_F \left(1 + \frac{d}{(\sqrt{2}-1)} \, (\|\la M_1 + M_0\|_F + \|\Delta \EL{}\|_F) \right) \, . \label{eq:boundL12tilde}
\end{align}
\end{theorem}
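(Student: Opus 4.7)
The plan is to assemble Theorem \ref{thm:finalofstep1} as a direct corollary of Theorem \ref{thm:gen_sylvester_solution}, after verifying that the hypothesis \eqref{eq:boundL1} implies the convergence condition \eqref{eq:condition_convergence} with explicit, clean bounds for $\theta$, $\omega$, and $\delta$.

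First I would dispose of the easy estimates. From the definitions in \eqref{eq:blocksofdeltaL} and the fact that any block of a partitioned polynomial has Frobenius norm bounded by that of the whole, one has $\theta = \|(\Delta A_{22},\Delta B_{22})\|_F \leq \|\Delta \EL{}\|_F$ and $\omega = \|(M_0+\Delta A_{11},M_1+\Delta B_{11})\|_F \leq \|\lambda M_1+M_0\|_F + \|\Delta \EL{}\|_F$. Next, since \eqref{eq:boundL1} gives $\|\Delta\EL{}\|_F < ((\sqrt{2}-1)/d)^2 \leq 1/d$, Lemma \ref{lemm:deltabound} applies and yields $\delta \geq 2(\sqrt{2}-1)/d$, in particular $\delta>0$.

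The main verification is the inequality $\theta\omega/\delta^2 < 1/4$. Using the bounds above,
\[
\frac{\theta\omega}{\delta^2} \leq \frac{d^2\,\|\Delta\EL{}\|_F\,(\|\lambda M_1+M_0\|_F+\|\Delta\EL{}\|_F)}{4(\sqrt{2}-1)^2}.
\]
Since $\|\Delta\EL{}\|_F < 1$, we have $\|\lambda M_1+M_0\|_F+\|\Delta\EL{}\|_F \leq 1+\|\lambda M_1+M_0\|_F$, and then the hypothesis \eqref{eq:boundL1} gives $\|\Delta\EL{}\|_F (1+\|\lambda M_1+M_0\|_F)<(\sqrt{2}-1)^2/d^2$, from which $\theta\omega/\delta^2<1/4$ follows immediately. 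Hence Theorem \ref{thm:gen_sylvester_solution} produces a solution $(C,D)$ of \eqref{eq:gen_sylv}, which by Lemma \ref{lem:trivial} is equivalent to the strict equivalence \eqref{eq:constant_reduction_big}, and that solution satisfies
\[
\|(C,D)\|_F \leq \frac{2\theta}{\delta} \leq \frac{2\|\Delta\EL{}\|_F}{2(\sqrt{2}-1)/d}=\frac{d\,\|\Delta\EL{}\|_F}{\sqrt{2}-1},
\]
giving \eqref{eq:easyboundCD}.

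Finally, for the bound \eqref{eq:boundL12tilde}, I would carry out the $2\times 2$ block multiplication in \eqref{eq:constant_reduction_big} to read off
\[
\Delta\widetilde{\mathcal{L}}_{21}(\lambda) = \Delta\mathcal{L}_{21}(\lambda) + C\bigl(\lambda M_1+M_0+\Delta\mathcal{L}_{11}(\lambda)\bigr),
\]
\[
\Delta\widetilde{\mathcal{L}}_{12}(\lambda) = \Delta\mathcal{L}_{12}(\lambda) + \bigl(\lambda M_1+M_0+\Delta\mathcal{L}_{11}(\lambda)\bigr) D.
\]
Because $C$ and $D$ are constant (grade $0$), Lemma \ref{lemma:normsproducts}(c) gives a multiplicative factor of $1$ in each product, so after the triangle inequality
\[
\|\Delta\widetilde{\mathcal{L}}_{21}(\lambda)\|_F \leq \|\Delta\EL{}\|_F + \|(C,D)\|_F\,\bigl(\|\lambda M_1+M_0\|_F+\|\Delta\EL{}\|_F\bigr),
\]
and likewise for $\Delta\widetilde{\mathcal{L}}_{12}$. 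Substituting the bound on $\|(C,D)\|_F$ just obtained yields exactly \eqref{eq:boundL12tilde}. The only delicate step in the whole argument is the verification of the quadratic convergence condition $\theta\omega/\delta^2<1/4$; everything else is straightforward algebra, which is why the hypothesis \eqref{eq:boundL1} is stated in that specific form (with the $1+\|\lambda M_1+M_0\|_F$ factor) and why it is quadratic in $1/d$ rather than linear.
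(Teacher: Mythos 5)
Your proof is correct and follows essentially the same route as the paper: verify that \eqref{eq:boundL1} forces $\delta\geq 2(\sqrt{2}-1)/d$ and $\theta\omega/\delta^2<1/4$ via Lemma \ref{lemm:deltabound}, invoke Theorem \ref{thm:gen_sylvester_solution} to get $(C,D)$ with the stated bound, and then read \eqref{eq:boundL12tilde} off the explicit expressions $\Delta\widetilde{\mathcal{L}}_{21}=C(\lambda M_1+M_0+\Delta\mathcal{L}_{11})+\Delta\mathcal{L}_{21}$ and $\Delta\widetilde{\mathcal{L}}_{12}=(\lambda M_1+M_0+\Delta\mathcal{L}_{11})D+\Delta\mathcal{L}_{12}$. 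All the intermediate estimates match the paper's, so no changes are needed.
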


\begin{proof} The notation in \eqref{eq:blocksofdeltaL} for the blocks of $\Delta \EL{}$ is used throughout the proof. We first prove that \eqref{eq:boundL1} implies  \eqref{eq:condition_convergence} and, so, the existence of $C$ and $D$ satisfying \eqref{eq:constant_reduction_big}. For this purpose, note that  \eqref{eq:boundL1} implies $\|\Delta \EL{}\|_F < 1/d < 1$ and, therefore, that Lemma \ref{lemm:deltabound} holds and that $\delta >0$. With this, and the notation in Theorem \ref{thm:gen_sylvester_solution}, we get
\begin{align*}
\frac{\theta \omega}{\delta^2} & \leq \frac{\|\Delta \EL{}\|_F \, (\|\la M_1 + M_0\|_F + \|\Delta \EL{}\|_F)}{\frac{4(\sqrt{2}-1)^2}{d^2} } < \frac{1}{4},
\end{align*}
and \eqref{eq:condition_convergence} indeed holds. Then, Theorem \ref{thm:gen_sylvester_solution} implies that there exist matrices $C$ and $D$ satisfying \eqref{eq:constant_reduction_big} and
\[
  \|(C,D)\|_F\leq 2\frac{\theta}{\delta} \leq  \frac{d \|\Delta \EL{}\|_F}{(\sqrt{2}-1)}  \, ,
\]
which proves \eqref{eq:easyboundCD}. Finally, from \eqref{eq:perturbed_pencil} and \eqref{eq:constant_reduction_big}, we obtain that
\begin{align*}
\Delta \widetilde{\mathcal{L}}_{12}(\lambda) & = (\la M_1 + M_0 + \Delta \mathcal{L}_{11}(\lambda)) D + \Delta \mathcal{L}_{12}(\lambda), \\
\Delta \widetilde{\mathcal{L}}_{21}(\lambda) & = C (\la M_1 + M_0 + \Delta \mathcal{L}_{11}(\lambda))  + \Delta \mathcal{L}_{21}(\lambda) \, ,
\end{align*}
which combined with \eqref{eq:easyboundCD} leads to \eqref{eq:boundL12tilde}.

\end{proof}

\subsection{Second step: proving that $\EL{} + \Delta \widetilde{\mathcal{L}} (\la)$ in \eqref{eq:constant_reduction_big} is a strong block minimal bases pencil} \label{sec:secondstep}
The main result of this section is Theorem \ref{thm:finalofstep2}. From the definition of strong block minimal bases pencils, it is not surprising that part of the proof of Theorem \ref{thm:finalofstep2} relies on algebraic results that characterize when a matrix polynomial is a minimal basis with all its row degrees equal and such that any minimal basis dual to it has also all its row degrees equal. In the first part of this section, we establish such characterizations. In this process, we use the complete eigenstructure of a matrix polynomial. Since it may include infinite eigenvalues, whose definition depends on which grade is chosen for the polynomial \cite[Section 2]{de2014spectral}, we adopt the convention in this section that anytime a complete eigenstructure is mentioned, the grade of the corresponding polynomial is equal to its degree.

A simple result that is used in this section is the next lemma.

\begin{lemma} \label{lemm:eigenstminbasis} Let $Q(\la) \in \FF[\la]^{m\times n}$ with $m<n$. Then, $Q(\la)$ is a minimal basis with all its row degrees equal if and only if the complete eigenstructure of $Q(\la)$ consists of only $n-m$ right minimal indices.
\end{lemma}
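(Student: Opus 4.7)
The plan is to unpack the phrase ``the complete eigenstructure consists of only $n-m$ right minimal indices'' into three concrete conditions and then match them, one by one, with the algebraic requirements defining a minimal basis whose row degrees are all equal, using Theorem~\ref{thm:minimal_basis} and a careful analysis of the reversal operator.

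First I would observe that, by the definition of complete eigenstructure recalled in Section~\ref{sec:basic} (and the convention $\text{grade}=\deg(Q)$ adopted in this section), the right-hand side of the equivalence is the conjunction of: (i) no finite elementary divisors, (ii) no infinite elementary divisors (taken with $d=\deg(Q)$), and (iii) no left minimal indices, together with the count of $n-m$ right minimal indices. Since the number of left minimal indices equals $m-\rank(Q)$ and the number of right minimal indices equals $n-\rank(Q)$, conditions (iii) and the count automatically force $\rank(Q)=m$. Translating (i) and (ii) through the Smith form, these become precisely: $Q(\la_0)$ has full row rank $m$ for every $\la_0\in\overline{\FF}$, and $\rev_d Q(0)$ has full row rank $m$.

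Next, on the left-hand side, I would invoke Theorem~\ref{thm:minimal_basis}: $Q(\la)$ is a minimal basis if and only if $Q(\la_0)$ has full row rank for all $\la_0\in\overline{\FF}$ and $Q$ is row reduced. This already matches the ``finite'' half of the right-hand side. The bridge I still need is the identity
\[
\text{``$Q$ row reduced with all row degrees equal to $d=\deg(Q)$''}\iff\text{``$\rev_d Q(0)$ has full row rank.''}
\]
The key observation here is that the $i$-th row of $\rev_d Q(0)$ is exactly the coefficient of $\la^{d}$ in the $i$-th row of $Q(\la)$. Hence this row vanishes whenever the $i$-th row degree of $Q$ is strictly less than $d$. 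So full row rank of $\rev_d Q(0)$ simultaneously forces every row degree to equal $d$ and forces the leading (= highest row degree) coefficient matrix $Q_h=Q_d$ to have full row rank; conversely, if these two conditions hold then $\rev_d Q(0)=Q_h$ and we are done.

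Combining the three pieces (the characterization of the eigenstructure condition, Theorem~\ref{thm:minimal_basis}, and the reversal bridge) gives the equivalence. I expect the only subtle point to be the reversal step, since one must be careful that the ``grade'' used to define infinite elementary divisors is the same $d=\deg(Q)$ that appears when converting row-reducedness with equal row degrees into a full-row-rank statement about $\rev_d Q(0)$; once this is pinned down the rest is bookkeeping with the Smith form and Theorem~\ref{thm:minimal_basis}.
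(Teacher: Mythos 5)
Your proposal is correct and takes essentially the same route as the paper's proof: both reduce the statement to Theorem~\ref{thm:minimal_basis} together with the key observation that the leading coefficient $Q_d=\rev_d Q(0)$ has full row rank exactly when all row degrees equal $d=\deg(Q)$ and the highest row degree coefficient matrix $Q_h$ has full row rank. The paper phrases this via ``no infinite eigenvalues $\Leftrightarrow \rank Q_d=m$'' and runs the two implications separately, but the content is identical to your reversal bridge.
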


\begin{proof} It is a simple consequence of Theorem \ref{thm:minimal_basis} and the fact that if all the row degrees of $Q(\la) = \sum_{i=0}^q Q_i \lambda^i$ (where $Q_q \ne 0$) are equal, then its highest row degree coefficient matrix is equal to its leading coefficient $Q_q$. So, if $Q(\la)$ is a minimal basis with all its row degrees equal, then Theorem \ref{thm:minimal_basis} guarantees that $Q(\la)$ has no finite eigenvalues, since $Q(\la_0)$ has full row rank for all $\la_0 \in \overline{\FF}$, and that $Q(\la)$ has no infinite eigenvalues, since it is row reduced. In addition, $Q(\la)$ has no left minimal indices, since it has full row rank. Therefore, the complete eigenstructure of $Q(\la)$ consists of only $n-m$ right minimal indices.

Conversely, if the complete eigenstructure of $Q(\la)$ consists of only $n-m$ right minimal indices, then $\rank Q_q = \rank Q(\la) = m$, because $Q(\la)$ has neither infinite eigenvalues nor left minimal indices. This implies that all the row degrees of $Q(\la)$ are equal, since otherwise $\rank Q_q < m$, and that $Q(\la)$ is row reduced. Moreover, $\rank Q(\la_0) = m$ for all $\la_0 \in \overline{\FF}$ because $Q(\la)$ has no finite eigenvalues, and we get from Theorem \ref{thm:minimal_basis} that $Q(\la)$ is a minimal basis with all its row degrees equal.
\end{proof}

{\em Convolution matrices} will be useful in our characterizations of minimal bases and in a number of perturbation results. For any matrix polynomial $Q(\la) = \sum_{i=0}^q Q_i \lambda^i$ of grade $q$ and arbitrary size, we define in the spirit of Gantmacher \cite[Chapter XII]{gantmacher1960theory} the sequence of its {\em convolution matrices} as follows
\begin{equation} \label{eq:defconvolution}
C_j (Q(\la)) =
\underbrace{\left[
\begin{array}{llll}
Q_q \\
Q_{q-1} & Q_{q} \\
\vdots &  Q_{q-1} & \ddots \\
Q_0 & \vdots & \ddots & Q_{q} \\
    & Q_0    &        & Q_{q-1} \\
    &        & \ddots &   \vdots  \\
    &        &         & Q_0
\end{array}
\right]}_{\displaystyle j+1 \; \mbox{block columns}} , \quad \mbox{for $j=0,1,2,\ldots$.}
\end{equation}
Observe that for every $j$ the matrix $C_j (Q(\la))$ is a constant matrix. In particular for $j=0$, the matrix $C_0 (Q(\la))$ is a block column matrix whose block entries are the matrix coefficients of the polynomial. The fundamental property of these convolution matrices is that if $Z(\la)$ is any matrix polynomial of grade $j$ for which the product $Q(\la) Z(\la)$ is defined and is considered to have grade $q+j$, then
\begin{equation} \label{eq:fundpropconvolution}
C_0 (Q(\la) Z(\la)) = C_j (Q(\la)) \, C_0 (Z(\la)) \, .
\end{equation}
Another easy property of convolution matrices that we often use is that $\|C_j (Q(\la))\|_F \allowbreak = \sqrt{j+1} \, \|Q(\la)\|_F$. Note also that if $S(\la)$ is another matrix polynomial with the same grade as $Q(\la)$, then $C_j (Q(\la) + S(\la))= C_j (Q(\la)) + C_j (S(\la))$, for all $j$. The convolution matrices for pencils are particularly simple. For instance, for the pencil $L_\e (\la) \otimes I_n$ in the $(2,1)$-block of \eqref{eq:linearization_general}, we have with the notation in \eqref{eq:bases_definitions} that
\begin{equation} \label{eq:convolforpencils}
C_{\e - 1} (L_\e (\la) \otimes I_n) =
\underbrace{
\begin{bmatrix}
F_{\e n} \\
-E_{\e n} & \ddots \\
& \ddots & F_{\e n} \\
& & -E_{\e n}
\end{bmatrix}}_{\displaystyle \e \; \mbox{block columns}}
\left. \phantom{\begin{array}{l} l\\l\\l\\l\\l \end{array}} \! \! \!\!\! \!\!\!\right\}
\e+1 \mbox{ block rows} \,.
\end{equation}

Lemma \ref{lemm:eigenstminbasis} motivates us to look deeper into the right minimal indices of a matrix polynomial $Q(\la)$ and into the rational right null subspace $\mathcal{N}_r (Q)$ defined in \eqref{eq:nullspaces}. This is the purpose of Lemma \ref{lemm:nullspaconv}, which proves with precision for general matrix polynomials ideas that can be found in \cite[Chapter XII]{gantmacher1960theory} only for matrix pencils.

\begin{lemma} \label{lemm:nullspaconv} Let $Q(\la) \in \FF[\la]^{m\times n}$ and let $C_s(Q(\la))$, $s=0,1,2,...$, be the sequence of convolution matrices of $Q(\la)$. Then, the following statements hold.
\begin{enumerate}
\item[\rm (a)] Let $v(\la) \in \FF[\la]^{n\times 1}$ be a polynomial vector of grade $j$. Then, $v(\la) \in  \mathcal{N}_r (Q)$ if and only if $C_0 (v(\la)) \in \mathcal{N}_r (C_j (Q(\la)))$.
\item[\rm (b)] The smallest right minimal index of $Q(\la)$ is $j$ if and only if $C_{j-1} (Q(\la))$ has full column rank and $C_j (Q(\la))$ does not have full column rank.
\item[\rm (c)] If $j$ is the smallest right minimal index of $Q(\la)$ and $\dim  \mathcal{N}_r (C_j (Q(\la))) =p$, then $Q(\la)$ has at least $p$ minimal indices equal to $j$.
\end{enumerate}
\end{lemma}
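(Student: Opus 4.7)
The whole lemma reduces to careful bookkeeping with the fundamental identity \eqref{eq:fundpropconvolution}. For part (a), I would apply \eqref{eq:fundpropconvolution} to the product $Q(\la) v(\la)$, treating $v(\la)$ as a grade-$j$ polynomial, to obtain $C_0(Q(\la)v(\la)) = C_j(Q(\la)) \, C_0(v(\la))$. Since the vectorization $C_0(w(\la))$ of a polynomial vector vanishes iff the polynomial vector itself is zero, the condition $Q(\la)v(\la) = 0$ is equivalent to $C_j(Q(\la)) \, C_0(v(\la)) = 0$, which is the claim.

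For part (b), I would combine (a) with the standard fact (from \cite{doi:10.1137/0313029}) that the smallest right minimal index of $Q(\la)$ equals the minimum degree of a nonzero polynomial vector in $\mathcal{N}_r(Q)$. Applying (a) with grades $j-1$ and $j$: the smallest such degree is $\ge j$ iff no nonzero grade-$(j-1)$ polynomial vector lies in $\mathcal{N}_r(Q)$, i.e.\ iff $C_{j-1}(Q(\la))$ has full column rank; and it is $\le j$ iff $C_j(Q(\la))$ does not have full column rank. Combining the two inequalities gives the stated equivalence. (The boundary case $j=0$ is handled by dropping the vacuous condition on $C_{-1}$.)

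For part (c), part (a) yields an $\FF$-linear isomorphism between $\mathcal{N}_r(C_j(Q(\la)))$ and the $\FF$-vector space $\mathcal{N}_r(Q)_{\le j}$ of polynomial vectors of degree at most $j$ in $\mathcal{N}_r(Q)$, so both have $\FF$-dimension $p$. Because $j$ is the smallest right minimal index, every nonzero element of $\mathcal{N}_r(Q)_{\le j}$ has degree exactly $j$. Now fix a minimal basis $b_1(\la), \ldots, b_P(\la)$ of $\mathcal{N}_r(Q)$ with indices $j = \e_1 \le \cdots \le \e_P$; the predictable-degree property of minimal bases \cite{doi:10.1137/0313029} forces any $v(\la) \in \mathcal{N}_r(Q)_{\le j}$ to be an $\FF$-linear combination of only those $b_i$ with $\e_i = j$, since a unique expansion $v = \sum_i r_i(\la) b_i(\la)$ with $r_i \in \FF[\la]$ and $\deg(r_i) + \e_i \le j$ forces $r_i = 0$ when $\e_i > j$ and $r_i$ to be a constant when $\e_i = j$. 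Hence $p = \dim_\FF \mathcal{N}_r(Q)_{\le j} = |\{i : \e_i = j\}|$, giving at least (in fact exactly) $p$ minimal indices equal to $j$. The main technical input is the predictable-degree property of minimal bases used in (c); parts (a) and (b) are essentially formal consequences of \eqref{eq:fundpropconvolution}.
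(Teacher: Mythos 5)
Your proofs of parts (a) and (b) coincide with the paper's: (a) is the identity \eqref{eq:fundpropconvolution} plus the observation that $C_0(v(\la))=0$ iff $v(\la)=0$, and (b) is the same two-sided degree argument (the paper phrases the ``$\geq j$'' direction as ``$C_{j-1}$ of full column rank forces $C_{j-2},\ldots,C_0$ to have full column rank,'' which is your remark that a lower-degree null vector can be regarded as having grade $j-1$). Part (c) is where you genuinely diverge. The paper takes a basis $\{v^{(1)},\ldots,v^{(p)}\}$ of $\mathcal{N}_r(C_j(Q(\la)))$, passes to the polynomial vectors $\mathcal{P}(v^{(k)};\la)$ of degree exactly $j$, and proves they are linearly independent \emph{over} $\FF(\la)$ by a leading-coefficient contradiction (a dependence relation with polynomial coefficients would produce a nonzero constant combination of the $v^{(k)}_j$, hence a null vector of degree $<j$, hence zero); it then invokes the greedy characterization of minimal indices to conclude there are at least $p$ indices equal to $j$. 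You instead count $\FF$-dimensions: the bijection of part (a) identifies $\mathcal{N}_r(C_j(Q(\la)))$ with the $\FF$-space of null vectors of degree at most $j$, and the predictable-degree property (together with the fact that polynomial vectors in the span of a minimal basis have polynomial expansion coefficients) shows this space is spanned exactly by the basis vectors of degree $j$. Both routes rest on a standard fact from Forney's theory of minimal bases---the paper on the greedy construction of minimal indices, you on the predictable-degree property---so neither is more self-contained; your version buys the sharper conclusion that there are \emph{exactly} $p$ minimal indices equal to $j$, while the paper's buys an elementary, hands-on independence argument that does not need the expansion machinery. Either is acceptable.
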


\begin{proof} Part (a) follows immediately from \eqref{eq:fundpropconvolution}.
Before proceeding, note that part (a) establishes the natural bijection\footnote{We emphasize that this bijection is not a linear map since the fields of the linear spaces corresponding to the domain and the codomain are different. Nevertheless, it has some obvious linear properties that can be used.}
$v(\la) \longmapsto C_0 (v(\la))$ between the set of polynomial vectors of grade $j$ in
$\mathcal{N}_r (Q) \subseteq \FF(\la)^n$ and $\mathcal{N}_r (C_j (Q(\la))) \subseteq \FF^{(j+1)n \times 1}$. Indeed $v(\la) \mapsto C_0 (v(\la))$ is a bijection, since its inverse can be trivially constructed as follows: partition any $x \in \mathcal{N}_r (C_j (Q(\la))) \subseteq \FF^{(j+1)n \times 1}$ as $x = [x_j^T , \ldots, x_1^T , x_0^T]^T$, where $x_i \in \FF^{n \times 1}$, and note that
\begin{equation} \label{eq:inversebijection}
x \longmapsto \sum_{i=0}^j x_i \la^i =: \mathcal{P}(x;\la) \in \mathcal{N}_r (Q)
\end{equation}
is the inverse of $v(\la) \mapsto C_0 (v(\la))$.

Part (b). From part (a), it is obvious that if the smallest right minimal index of $Q(\la)$ is $j$, then $C_{j-1} (Q(\la))$ has full column rank but $C_{j} (Q(\la))$ does not. The converse also follows from part (a) by taking into account that if $C_{j-1} (Q(\la))$ has full column rank, then $C_{j-2} (Q(\la)), \ldots, \allowbreak C_{0} (Q(\la))$ have also full column ranks. Therefore, $\mathcal{N}_r(C_{j-1} (Q(\la))) = \{0\}, \ldots , \mathcal{N}_r(C_{1} (Q(\la))) = \{0\}, \mathcal{N}_r(C_{0} (Q(\la))) = \{0\}$ and part (a) implies that $\mathcal{N}_r (Q)$ does not include vectors different from $0$ of degree less than $j$, but does include vectors of degree $j$ because $C_j (Q(\la))$ does not have full column rank and so $\mathcal{N}_r(C_{j} (Q(\la))) \ne \{0\}$.

The proof of part (c) requires more work. Let $\{v^{(1)}, \ldots , v^{(p)} \}$ be a basis of $\mathcal{N}_r (C_j (Q(\la)))$ and consider, according to \eqref{eq:inversebijection}, the vector polynomials $\mathcal{P}(v^{(k)};\la) = \sum_{i=0}^j v^{(k)}_i \la^i \in \mathcal{N}_r (Q)$ for $k=1,\ldots ,p$. Note that
$\mathcal{P}(v^{(k)};\la) \ne 0$, because $v^{(k)} \ne 0$, and that $\deg (\mathcal{P}(v^{(k)};\la)) = j$, because otherwise $Q(\la)$ would have right minimal indices smaller than $j$. The result follows from proving that $\mathcal{P}(v^{(1)};\la), \ldots, \mathcal{P}(v^{(p)};\la)$ are linearly independent. We prove this by contradiction.
Assume that there exists a linear combination
\[
a_1(\lambda)\mathcal{P}(v^{(1)};\la)+a_2(\lambda)\mathcal{P}(v^{(2)};\la)+
\cdots+a_p(\lambda) \mathcal{P}(v^{(p)};\la) = 0,
\]
where, without loss of generality, we assume that $a_1(\lambda),\hdots,a_p(\lambda)$ are scalar polynomials not all equal to zero (if they were rational functions we may multiply the equation above by their least common denominator).
The coefficient of the highest power in the equation above satisfies
\[
c_1 v^{(1)}_j + c_2 v^{(2)}_j + \cdots + c_p v^{(p)}_j = 0,
\]
for some constants $c_1,c_2,\hdots,c_p$, where at least one of them  is nonzero.
Then, let us define the polynomial vector $q(\lambda):= \sum_{k=1}^p c_k \mathcal{P}(v^{(k)};\la)$.
Notice that $q(\lambda)\in\mathcal{N}_r(Q)$ and that $\deg (q(\lambda))< j$.
Then $q(\lambda)=0$, because otherwise $Q(\la)$ would have right minimal indices smaller than $j$, which implies $\sum_{k=1}^p c_k v^{(k)} =0$.
This is a contradiction since $\{v^{(1)},v^{(2)},\hdots,v^{(p)}\}$ is a linearly independent set of vectors.
\end{proof}

Next, we study when arbitrary pencils with the same size as the $(2,1)$-block of $\EL{} + \Delta \widetilde{\mathcal{L}} (\la)$ in \eqref{eq:constant_reduction_big} are the corresponding block of a strong block minimal bases pencil.

\begin{theorem} \label{thm:pencilminconv} Let $A + \la B \in \FF[\la]^{\e n \times (\e + 1) n}$ and let $C_s(A+\la B)$, $s=0,1,2,...$, be the sequence of convolution matrices of $A + \la B$. Then, $A+\la B$ is a minimal basis with all its row degrees equal to $1$ and with all the row degrees of any minimal basis dual to it equal to $\e$ if and only if $C_{\e -1}(A + \la B) \in \FF^{\e (\e + 1) n \times \e (\e + 1) n}$ is nonsingular and $C_{\e}(A + \la B) \in \FF^{\e (\e + 2) n \times (\e + 1)^2 n}$ has full row rank.
\end{theorem}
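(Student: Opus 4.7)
The plan is to use Lemma \ref{lemm:nullspaconv} together with the Kronecker canonical form identity relating the rank of a pencil to the sum of its minimal indices and its elementary divisor degrees. This bridges the algebraic minimal-basis structure with rank properties of the convolution matrices, and in fact the dimension $\e n$ of the ambient row space will be doing most of the heavy lifting in the nontrivial direction.

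For the forward direction I would argue as follows. Assuming $A+\lambda B$ is a minimal basis with row degrees all equal to $1$ and with any dual minimal basis having row degrees all equal to $\e$, Lemma \ref{lemm:eigenstminbasis} (applied to $A+\lambda B$ itself) yields that the complete eigenstructure of $A+\lambda B$ consists exclusively of its right minimal indices, and these are precisely the row degrees of any dual minimal basis; hence there are exactly $n$ right minimal indices, all equal to $\e$. By Lemma \ref{lemm:nullspaconv}(b), the smallest right minimal index being $\e$ forces $C_{\e-1}(A+\lambda B)$ to have full column rank, and since this matrix is square of size $\e(\e+1)n$, it is nonsingular. For $C_\e(A+\lambda B)$, the bijection in Lemma \ref{lemm:nullspaconv}(a) together with the predictable-degree property of minimal bases (so that vectors in $\mathcal{N}_r(A+\lambda B)$ of degree at most $\e$ are $\FF$-linear combinations of the $n$ dual minimal basis elements) gives $\dim \mathcal{N}_r(C_\e(A+\lambda B)) = n$; rank--nullity on the $\e(\e+2)n \times (\e+1)^2 n$ matrix $C_\e$ then yields $\rank(C_\e) = (\e+1)^2 n - n = \e(\e+2)n$, i.e.\ full row rank.

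The backward direction is the substantive part. Suppose $C_{\e-1}(A+\lambda B)$ is nonsingular and $C_\e(A+\lambda B)$ has full row rank. Then $C_{\e-1}$ has full column rank, while $C_\e$ cannot have full column rank (more columns than rows), so Lemma \ref{lemm:nullspaconv}(b) gives that the smallest right minimal index of $A+\lambda B$ is exactly $\e$. Moreover $\dim \mathcal{N}_r(C_\e) = (\e+1)^2 n - \e(\e+2)n = n$, and Lemma \ref{lemm:nullspaconv}(c) then delivers at least $n$ right minimal indices equal to $\e$. Let $r := \rank(A+\lambda B)$, let $\e_1 \le \cdots \le \e_p$ be the right minimal indices, $\eta_1 \le \cdots \le \eta_q$ the left ones, and $\delta$ the total degree of the (finite plus infinite) elementary divisors. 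The Kronecker canonical form of a pencil gives the identity $r = \sum_i \e_i + \sum_j \eta_j + \delta$, and the size of $A+\lambda B$ forces $r \le \e n$. Combining $\sum_i \e_i \ge n\e$ with nonnegativity of all summands one concludes $r = n\e$, $\sum_j \eta_j = 0$, $\delta = 0$, and that there are exactly $n$ right minimal indices, all equal to $\e$.

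It remains to assemble the conclusion via Theorem \ref{thm:minimal_basis}. The facts just derived mean $A+\lambda B$ has full polynomial row rank, no finite eigenvalues (so $A+\lambda_0 B$ has full row rank for every $\lambda_0 \in \overline{\FF}$), and no infinite eigenvalues; the latter, combined with $\rank(A+\lambda B) = \e n$, is equivalent to the leading coefficient $B$ having full row rank, which in turn forces every row of $A+\lambda B$ to have degree exactly $1$ and makes $B$ its highest row degree coefficient matrix, so $A+\lambda B$ is row reduced. Theorem \ref{thm:minimal_basis} then certifies that $A+\lambda B$ is a minimal basis with all row degrees equal to $1$, and the assertion about dual minimal bases follows because the row degrees of any such basis are precisely the right minimal indices of $A+\lambda B$, which have just been shown to all equal $\e$. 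The main obstacle is the backward direction, specifically using the ambient dimension $\e n$ together with the pencil rank identity to exclude left minimal indices and eigenvalues; once that is secured the rest is a standard application of the Forney--Kronecker machinery already packaged in Lemma \ref{lemm:nullspaconv} and Theorem \ref{thm:minimal_basis}.
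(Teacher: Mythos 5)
Your proof is correct and follows essentially the same route as the paper's: Lemma \ref{lemm:nullspaconv}(b)--(c) for the convolution-matrix ranks, and the index sum theorem for pencils combined with the bound $\rank(A+\la B)\le \e n$ to rule out left minimal indices and elementary divisors in the backward direction. The only cosmetic differences are that in the forward direction you compute $\dim\mathcal{N}_r(C_\e)=n$ exactly via the predictable-degree property, where the paper only needs the inequality $\dim\mathcal{N}_r(C_\e)\le n$ from Lemma \ref{lemm:nullspaconv}(c), and that your final assembly re-derives the converse of Lemma \ref{lemm:eigenstminbasis} instead of citing it.
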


\begin{proof} Bear in mind that the right minimal indices of a minimal basis are the row degrees of any minimal basis dual to it. First, assume that $A+\la B$ is a minimal basis with all its row degrees equal to $1$ and with all the row degrees of any minimal basis dual to it equal to $\e$. Then, the complete eigenstructure of $A + \la B$ consists of only $n$ right minimal indices equal to $\e$, by Lemma \ref{lemm:eigenstminbasis}. From Lemma \ref{lemm:nullspaconv}(b), we get that $C_{\e -1}(A + \la B)$ has full column rank and, since it is square, it must be nonsingular. From Lemma \ref{lemm:nullspaconv}(c), we get that $n \geq \dim \mathcal{N}_r (C_{\e}(A + \la B)) = (\e + 1)^2 n - \rank (C_{\e}(A + \la B))$, which implies that
$\rank (C_{\e}(A + \la B)) \geq (\e + 1)^2 n -n = \e (\e + 2) n$ and, finally, that
$\rank (C_{\e}(A + \la B)) = \e (\e + 2) n$, because $C_{\e}(A + \la B)$ has $\e (\e + 2) n$ rows.

Next, assume that $C_{\e -1}(A + \la B)$ is nonsingular and $C_{\e}(A + \la B)$ has full row rank. Therefore, $\dim \mathcal{N}_r (C_{\e}(A + \la B)) = (\e + 1)^2 n - \rank (C_{\e}(A + \la B)) = (\e + 1)^2 n - \e (\e + 2) n = n$. From Lemma \ref{lemm:nullspaconv}(b), we get that the smallest right minimal index of $A+ \la B$ is $\e$, and from Lemma \ref{lemm:nullspaconv}(c), we get that $A+ \la B$ has at least $n$ right minimal indices equal to $\e$. Also note that the degree of $A + \la B$ must be $1$, since otherwise its minimal indices would be all equal to zero. Combining this information with the index sum theorem \cite[Theorem 6.5]{de2014spectral} applied to $A+ \la B$ and with the obvious bound $\e n \geq \rank (A+ \la B)$, we get
\begin{equation} \label{eq:indexsuminequalities}
n \e \geq \rank (A+ \la B) \geq n \e + \delta(A+\la B) + \mu_{left}(A+\la B),
\end{equation}
where $\delta(A+\la B)$ is the sum of the degrees of all the elementary divisors (finite and infinite) of $A+ \la B$ and $\mu_{left}(A+\la B)$ is the sum of the left minimal indices of $A+ \la B$.  The inequalities \eqref{eq:indexsuminequalities} imply that $\rank (A+ \la B) = n \e$ and that $A+\la B$ has no elementary divisors at all. Moreover, $\rank (A+ \la B) = n \e$ implies that $A+ \la B$ has no left minimal indices and that it has exactly $n$ right minimal indices. Therefore, the complete eigenstructure of $A+ \la B$ consists of only $n$ right minimal indices equal to $\e$, which implies, by Lemma \ref{lemm:eigenstminbasis}, that  $A+\la B$ is a minimal basis with all its row degrees equal to $1$ and with all the row degrees of any minimal basis dual to it equal to $\e$.
\end{proof}

Theorem \ref{thm:polyminconv} is a counterpart of the previous result which is valid for matrix polynomials that may be minimal bases dual to the pencils considered in Theorem \ref{thm:pencilminconv}. The proof of Theorem \ref{thm:polyminconv} is omitted, since it is very similar to that of Theorem \ref{thm:pencilminconv} and is based again on Lemmas \ref{lemm:eigenstminbasis} and \ref{lemm:nullspaconv}.

\begin{theorem} \label{thm:polyminconv} Let $Q(\la) = \sum_{i=0}^\e Q_i \la^i  \in \FF[\la]^{n \times (\e + 1) n}$ and let $C_s(Q(\la))$, $s=0,1,2,...$, be the sequence of convolution matrices of $Q(\la)$. Then, $Q(\la)$ is a minimal basis with all its row degrees equal to $\e$ and with all the row degrees of any minimal basis dual to it equal to $1$ if and only if $C_{0}(Q(\la)) \in \FF^{(\e + 1) n \times (\e + 1) n}$ is nonsingular and $C_{1}(Q(\la)) \in \FF^{(\e + 2) n \times 2(\e + 1) n}$ has full row rank.
\end{theorem}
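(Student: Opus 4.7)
My plan is to mirror the proof of Theorem \ref{thm:pencilminconv}, with the roles of the two partners in a dual pair of minimal bases swapped: the matrix polynomial $Q(\la)$ now plays the role that the pencil $A+\la B$ played there. Since $Q(\la)$ has $n$ rows and $(\e+1)n$ columns, any minimal basis dual to it has $\e n$ rows, and the hypothesis that the dual row degrees are all $1$ translates, via Lemma \ref{lemm:eigenstminbasis} and the identification of the right minimal indices of $Q(\la)$ with the row degrees of any dual minimal basis, into the target complete eigenstructure: exactly $\e n$ right minimal indices, all equal to $1$.

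For the forward implication, Lemma \ref{lemm:eigenstminbasis} produces the above complete eigenstructure, and then Lemma \ref{lemm:nullspaconv}(b) applied with $j=1$ shows that $C_0(Q(\la))$ has full column rank; being square of size $(\e+1)n\times(\e+1)n$, it is nonsingular. Lemma \ref{lemm:nullspaconv}(c) applied with $j=1$ then yields $\dim \mathcal{N}_r(C_1(Q(\la)))\leq \e n$, since the number of right minimal indices of $Q(\la)$ equal to $1$ is $\e n$; hence $\rank C_1(Q(\la))\geq 2(\e+1)n-\e n=(\e+2)n$, which equals the number of rows of $C_1(Q(\la))$, so $C_1(Q(\la))$ has full row rank.

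For the converse, assume $C_0(Q(\la))$ is nonsingular and $C_1(Q(\la))$ has full row rank. Nonsingularity of the stacked block column $C_0(Q(\la))=[Q_\e^T,\ldots,Q_0^T]^T$ forces every $Q_i$ to have full row rank $n$, since any row dependence inside a single block would lift to a row dependence of $C_0(Q(\la))$; in particular $Q_\e\neq 0$, so $\deg Q=\e$ and $\rank Q=n$. Full row rank of $C_1(Q(\la))$ gives $\dim \mathcal{N}_r(C_1(Q(\la)))=\e n$, so $C_1(Q(\la))$ does not have full column rank while $C_0(Q(\la))$ does; Lemma \ref{lemm:nullspaconv}(b) then makes the smallest right minimal index of $Q(\la)$ equal to $1$, and Lemma \ref{lemm:nullspaconv}(c) delivers at least $\e n$ right minimal indices equal to $1$. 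Combining $\mu_{right}(Q)\geq \e n$ with the index sum theorem \cite[Theorem 6.5]{de2014spectral} in the equality form $\delta(Q)+\mu_{left}(Q)+\mu_{right}(Q)=\deg(Q)\cdot \rank(Q)=\e n$ forces $\delta(Q)=0$, $\mu_{left}(Q)=0$, $\mu_{right}(Q)=\e n$, and all these $\e n$ right minimal indices equal to $1$. Lemma \ref{lemm:eigenstminbasis} then delivers the claimed row-degree structure of $Q(\la)$ and of any minimal basis dual to it. The only mildly delicate point is this rank bookkeeping for $C_0(Q(\la))$, needed so that the index sum theorem is applied with the correct grade $\deg Q = \e$.
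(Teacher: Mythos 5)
Your proof is correct and follows exactly the route the paper prescribes: the paper omits the proof of this theorem, stating only that it is ``very similar to that of Theorem \ref{thm:pencilminconv} and is based again on Lemmas \ref{lemm:eigenstminbasis} and \ref{lemm:nullspaconv}'', and your argument is precisely that adaptation, including the correct use of the index sum theorem with $\deg Q=\e$ and $\rank Q=n$ extracted from the nonsingularity of $C_0(Q(\la))$.
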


Theorems \ref{thm:pencilminconv} and \ref{thm:polyminconv} have established the characterizations of a minimal basis with all its row degrees equal and with all the row degrees of any minimal basis dual to it also equal that are needed in this paper. We now return to our perturbation problem for $\EL{} + \Delta \widetilde{\mathcal{L}} (\la)$ in \eqref{eq:constant_reduction_big}. In Theorem \ref{thm:unperturbed21block}, we give some properties of the unperturbed $(2,1)$-block of $\EL{}$, that is, $L_\e (\la) \otimes I_n$, and its dual minimal basis $\Lambda_\e (\la)^T \otimes I_n$.  The proof is given in  Appendix \ref{sec:proof-sigmamin2}.

\begin{theorem} \label{thm:unperturbed21block} Let $L_\e (\la)$ and $\Lambda_\e (\la)^T$ be the pencil and the row vector polynomial defined in \eqref{eq:Lk} and \eqref{eq:Lambda}, respectively. Then the following statements hold.
\begin{enumerate}
\item[\rm (a)] $C_{\e -1}(L_\e (\la) \otimes I_n) \in \FF^{\e (\e + 1) n \times \e (\e + 1) n}$ is nonsingular and $C_{\e}(L_\e (\la) \otimes I_n) \in \FF^{\e (\e + 2) n \times (\e + 1)^2 n}$ has full row rank.
\item[\rm (b)] $C_{0}(\Lambda_\e (\la)^T \otimes I_n) = I_{(\e + 1) n}$ and, therefore, is nonsingular, and $C_{1}(\Lambda_\e (\la)^T \otimes I_n) \in \FF^{(\e + 2) n \times 2(\e + 1) n}$ has full row rank.
\item[\rm (c)] $\displaystyle  \sigma_{\min} (C_{\e -1}(L_\e (\la) \otimes I_n)) = \sigma_{\min}(C_{\e}(L_\e (\la) \otimes I_n)) = 2 \sin \frac{\pi}{(4\e +2)} \geq \frac{3}{2(\e+1)}$.
\item[\rm (d)] $\sigma_{\min}(C_{0}(\Lambda_\e (\la)^T \otimes I_n)) =  \sigma_{\min}(C_{1}(\Lambda_\e (\la)^T \otimes I_n)) = 1.$
\end{enumerate}
\end{theorem}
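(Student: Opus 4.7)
My strategy will first reduce everything to the case $n = 1$ via the Kronecker-product structure. A block-by-block check shows that $C_j(L_\e(\la) \otimes I_n) = C_j(L_\e(\la)) \otimes I_n$ and $C_j(\Lambda_\e(\la)^T \otimes I_n) = C_j(\Lambda_\e(\la)^T) \otimes I_n$ for every $j \geq 0$; combined with $\sigma_{\min}(M \otimes I_n) = \sigma_{\min}(M)$ and $\mathrm{rank}(M \otimes I_n) = n\,\mathrm{rank}(M)$, this reduces all four statements of the theorem to their $n = 1$ versions. Part (a) and the full-row-rank assertions in (b) will then follow at once from Theorems \ref{thm:pencilminconv} and \ref{thm:polyminconv}: by Example \ref{ex-L-Lamb}, $L_\e(\la)$ and $\Lambda_\e(\la)^T$ are dual minimal bases whose row degrees are uniformly equal to $1$ and $\e$, respectively.

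For the remaining identity in (b), I will check by inspection that $C_0(\Lambda_\e(\la)^T) = I_{\e+1}$: the matrix coefficients of $\Lambda_\e(\la)^T$ are the unit row vectors $e_1^T, e_2^T,\ldots,e_{\e+1}^T$ ordered by decreasing degree, so stacking them produces the identity. For part (d), a direct block computation of $C_1(\Lambda_\e(\la)^T)\,C_1(\Lambda_\e(\la)^T)^T$ exploits the orthonormality of these coefficient rows to yield the diagonal matrix $\mathrm{diag}(1,2,2,\ldots,2,1)$, whence $\sigma_{\min}(C_1(\Lambda_\e^T)) = 1$.

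The main effort is part (c), where I must show $\sigma_{\min}(C_{\e-1}(L_\e)) = \sigma_{\min}(C_\e(L_\e)) = 2\sin(\pi/(4\e+2))$ even though the two matrices have different sizes. I will form the Gram matrices $C_{\e-1}(L_\e)^T C_{\e-1}(L_\e)$ and $C_\e(L_\e)\,C_\e(L_\e)^T$ explicitly, using the identities $F_\e F_\e^T = E_\e E_\e^T = I_\e$, $F_\e E_\e^T$ equal to the upper-shift matrix of size $\e$, and $F_\e^T F_\e + E_\e^T E_\e = \mathrm{diag}(1,2,\ldots,2,1)$. Both Gram matrices turn out to be block tridiagonal, with diagonal blocks drawn from $\{I_\e,\,2I_\e\}$ and off-diagonal blocks equal to minus an upper shift. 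The key observation, and the main obstacle in the argument, is that because the off-diagonal blocks act purely as coordinate shifts, the eigenvalue equation $Mw = \lambda w$ decouples along the diagonals of constant $p := i - k$ (where $i$ indexes the block and $k$ the within-block position) into independent scalar tridiagonal eigenproblems whose diagonals take values in $\{1,2\}$ and whose off-diagonals are $-1$. The smallest eigenvalue of the full Gram matrix is supplied by the longest such chain, which in both cases has length $\e$ and produces the same size-$\e$ tridiagonal matrix with diagonal $(1,2,2,\ldots,2)$ and off-diagonals $-1$; this coincidence is what forces the two different convolution matrices to share the same minimum singular value. The eigenvalues of that tridiagonal matrix are the classically known $4\sin^2((2k-1)\pi/(4\e+2))$ for $k=1,\ldots,\e$, so $\sigma_{\min}(C_{\e-1}(L_\e)) = \sigma_{\min}(C_\e(L_\e)) = 2\sin(\pi/(4\e+2))$. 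Finally, the inequality $2\sin(\pi/(4\e+2)) \geq 3/(2(\e+1))$ will come from the monotonicity of $\sin(x)/x$ on $(0,\pi/2]$: since $\pi/(4\e+2) \leq \pi/6$ for $\e \geq 1$, we get $\sin(\pi/(4\e+2)) \geq (3/\pi)\cdot \pi/(4\e+2) = 3/(4\e+2)$, whence $2\sin(\pi/(4\e+2)) \geq 3/(2\e+1) \geq 3/(2(\e+1))$.
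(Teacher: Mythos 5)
Your proposal is correct, and for parts (a), (b), (d), the reduction to $n=1$, and the final trigonometric inequality it coincides with the paper's argument. The only genuine difference is in how part (c) is executed. The paper works with the convolution matrices themselves: after a sign scaling it permutes $C_{\e-1}(L_\e)$ and $C_\e(L_\e)$ into direct sums of the bidiagonal staircase blocks $M_k$, $M_k^T$ (plus an extra $G_\e^T$ in the rectangular case) and then reads off $\sigma_{\min}(M_\e)$ from the Chebyshev-type characteristic polynomial of $M_\e M_\e^T$ computed in Proposition \ref{prop:normW}. You instead pass to the Gram matrices and observe that, because the off-diagonal blocks are pure shifts, the eigenproblem decouples along the diagonals $k-i=\mathrm{const}$ into scalar tridiagonal chains. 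These two decompositions are literally the same combinatorial fact (your chains are exactly the Gram matrices $M_k^TM_k$, $M_kM_k^T$, $G_\e^TG_\e$ of the paper's summands, up to the harmless sign similarity $\mathrm{diag}(1,-1,1,\dots)$), but your version is arguably cleaner to state since one never has to describe permutations of a rectangular matrix, only a partition of the index set of a symmetric one.

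Two small points deserve care. First, for the column Gram $C_{\e-1}(L_\e)^TC_{\e-1}(L_\e)$ the diagonal blocks are $\mathrm{diag}(1,2,\dots,2,1)$ of size $\e+1$, not $I_\e$ or $2I_\e$; that description fits the row Gram, which you may use instead since $C_{\e-1}(L_\e)$ is square. Second, for $C_\e(L_\e)C_\e(L_\e)^T$ there are \emph{three} chains of maximal length $\e$: two with diagonal $(1,2,\dots,2)$ (up to reversal) and one with all diagonal entries equal to $2$ (this is the paper's extra summand $G_\e^TG_\e$). You must check that the all-$2$ chain does not supply the minimum; its smallest eigenvalue is $4\sin^2\bigl(\pi/(2\e+2)\bigr)>4\sin^2\bigl(\pi/(4\e+2)\bigr)$, so the conclusion stands, but the step should be stated. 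With those two clarifications the argument is complete.
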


\smallskip

As a corollary of Theorem \ref{thm:pencilminconv} and Theorem \ref{thm:unperturbed21block}(a)-(c), we obtain the following perturbation result for the $(2,1)$-block of $\EL{} + \Delta \widetilde{\mathcal{L}} (\la)$ in \eqref{eq:constant_reduction_big}.

\begin{corollary} \label{cor:21blockpert} Let $\Delta \widetilde{\mathcal{L}}_{21} (\la)$ be any pencil of size $\e n \times (\e + 1) n$ such that
\begin{equation} \label{eq:21pertbound}
\|\Delta \widetilde{\mathcal{L}}_{21} (\la) \|_F <  \frac{3}{2(\e +1)^\frac32}.
\end{equation}
Then, $L_{\e}(\lambda)\otimes I_n+\Delta \widetilde{\mathcal{L}}_{21}(\lambda)$ is a minimal basis with all its row degrees equal to $1$ and with all the row degrees of any minimal basis dual to it equal to $\e$.
\end{corollary}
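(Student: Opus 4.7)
The plan is to apply Theorem \ref{thm:pencilminconv} to the pencil $A+\la B := L_{\e}(\la)\otimes I_n + \Delta \widetilde{\mathcal{L}}_{21}(\la)$. Thus I must verify that (i) $C_{\e-1}(L_{\e}(\la)\otimes I_n + \Delta \widetilde{\mathcal{L}}_{21}(\la))$ is nonsingular and (ii) $C_{\e}(L_{\e}(\la)\otimes I_n + \Delta \widetilde{\mathcal{L}}_{21}(\la))$ has full row rank. The whole proof is then a pure perturbation-of-singular-values argument, since both conditions hold for the unperturbed pencil by Theorem \ref{thm:unperturbed21block}.

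First, I would use the linearity of the convolution-matrix construction with respect to its argument, namely $C_j(Q + S) = C_j(Q) + C_j(S)$, to write
\[
C_{j}(L_{\e}(\la)\otimes I_n + \Delta \widetilde{\mathcal{L}}_{21}(\la)) = C_{j}(L_{\e}(\la)\otimes I_n) + C_{j}(\Delta \widetilde{\mathcal{L}}_{21}(\la)), \qquad j \in \{\e-1,\e\}.
\]
Combining Weyl's inequality for singular values with the identity $\|C_j(Q)\|_F = \sqrt{j+1}\,\|Q\|_F$ and the inequality $\|\cdot\|_2 \leq \|\cdot\|_F$ yields
\[
\sigma_{\min}\bigl(C_{j}(L_{\e}(\la)\otimes I_n + \Delta \widetilde{\mathcal{L}}_{21}(\la))\bigr) \geq \sigma_{\min}\bigl(C_{j}(L_{\e}(\la)\otimes I_n)\bigr) - \sqrt{j+1}\,\|\Delta \widetilde{\mathcal{L}}_{21}(\la)\|_F .
\]

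Now I would invoke Theorem \ref{thm:unperturbed21block}(c), which gives $\sigma_{\min}(C_{j}(L_{\e}(\la)\otimes I_n)) \geq 3/(2(\e+1))$ for $j = \e-1$ and $j=\e$. Taking $j = \e$ (the binding case, since $\sqrt{j+1}$ is then larger), the hypothesis $\|\Delta \widetilde{\mathcal{L}}_{21}(\la)\|_F < 3/(2(\e+1)^{3/2})$ is exactly what is needed to guarantee
\[
\sigma_{\min}\bigl(C_{\e}(L_{\e}(\la)\otimes I_n + \Delta \widetilde{\mathcal{L}}_{21}(\la))\bigr) > \frac{3}{2(\e+1)} - \sqrt{\e+1}\cdot \frac{3}{2(\e+1)^{3/2}} = 0,
\]
so $C_{\e}(L_{\e}(\la)\otimes I_n + \Delta \widetilde{\mathcal{L}}_{21}(\la))$ has full row rank. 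The same argument for $j = \e-1$ uses the weaker threshold $3/(2\sqrt{\e}(\e+1))$, which is implied by \eqref{eq:21pertbound}, and yields nonsingularity of $C_{\e-1}(L_{\e}(\la)\otimes I_n + \Delta \widetilde{\mathcal{L}}_{21}(\la))$ since this matrix is square. Both hypotheses of Theorem \ref{thm:pencilminconv} are therefore satisfied, and the conclusion of the corollary follows immediately.

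There is essentially no main obstacle here: the heavy lifting (the singular value bound for the unperturbed convolution matrices and the structural characterization via convolution matrices) has already been done in Theorem \ref{thm:unperturbed21block} and Theorem \ref{thm:pencilminconv}. The only subtlety is to be careful about which of the two inequalities (for $j = \e-1$ versus $j = \e$) is stricter and thus drives the stated bound $3/(2(\e+1)^{3/2})$; as shown above, it is the full row rank condition for $C_{\e}$ that dictates the threshold.
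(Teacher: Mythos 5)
Your proposal is correct and follows essentially the same route as the paper: write $C_j$ of the perturbed pencil as $C_j(L_\e(\la)\otimes I_n)+C_j(\Delta\widetilde{\mathcal{L}}_{21}(\la))$, bound $\|C_j(\Delta\widetilde{\mathcal{L}}_{21}(\la))\|_2$ by $\sqrt{j+1}\,\|\Delta\widetilde{\mathcal{L}}_{21}(\la)\|_F$, apply Weyl's perturbation theorem together with Theorem \ref{thm:unperturbed21block}(a)--(c), and conclude via Theorem \ref{thm:pencilminconv}. Your additional observation that the $j=\e$ case is the binding one (so that the hypothesis \eqref{eq:21pertbound} is exactly calibrated to it) is accurate and consistent with the paper's bound.
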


\begin{proof} Observe that \eqref{eq:21pertbound} implies that $\|C_{\e -1} (\Delta \widetilde{\mathcal{L}}_{21} (\la))\|_2 \leq \|C_{\e -1} (\Delta \widetilde{\mathcal{L}}_{21} (\la))\|_F \allowbreak = \sqrt{\e} \, \|\Delta \widetilde{\mathcal{L}}_{21} (\la)\|_F <
	\frac{3}{2(\e +1)} \leq \sigma_{\min} (C_{\e -1}(L_\e (\la) \otimes I_n))$, where we have used Theorem \ref{thm:unperturbed21block}(c). Therefore, $C_{\e -1}(L_\e (\la) \otimes I_n + \Delta \widetilde{\mathcal{L}}_{21} (\la)) = C_{\e -1}(L_\e (\la) \otimes I_n) + C_{\e -1}(\Delta \widetilde{\mathcal{L}}_{21} (\la))$ is nonnsingular, as a consequence of Theorem \ref{thm:unperturbed21block}(a) and Weyl's perturbation theorem for singular values \cite[Theorem 3.3.16]{Horn}. An analogous argument proves that $C_{\e}(L_\e (\la) \otimes I_n + \Delta \widetilde{\mathcal{L}}_{21} (\la))$ has full row rank. The result follows from Theorem \ref{thm:pencilminconv}.
\end{proof}

As a corollary of Theorem \ref{thm:polyminconv} and Theorem \ref{thm:unperturbed21block}(b)-(d), we obtain the following perturbation result for the minimal basis dual to $L_\e (\la) \otimes I_n$.

\begin{corollary} \label{cor:dual21blockpert} Let $\Delta R_\e (\la)^T$ be a matrix polynomial of size $n \times (\e + 1) n$, grade $\e$, and such that
\begin{equation} \label{eq:dual21pertbound}
\|\Delta R_\e (\la) \|_F < \frac{1}{\sqrt{2}}.
\end{equation}
Then, $\Lambda_{\e}(\lambda)^T\otimes I_n+\Delta R_\e (\lambda)^T$ is a minimal basis with all its row degrees equal to $\e$ and with all the row degrees of any minimal basis dual to it equal to $1$.
\end{corollary}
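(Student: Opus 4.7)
The plan is to mirror the proof of Corollary \ref{cor:21blockpert} but apply Theorem \ref{thm:polyminconv} (the polynomial analogue) rather than Theorem \ref{thm:pencilminconv}, together with the parts (b) and (d) of Theorem \ref{thm:unperturbed21block} that concern $\Lambda_\e(\la)^T \otimes I_n$. Setting $Q(\la) := \Lambda_\e(\la)^T \otimes I_n + \Delta R_\e(\la)^T$, the goal is to verify that $C_0(Q(\la)) \in \FF^{(\e+1)n \times (\e+1)n}$ is nonsingular and $C_1(Q(\la)) \in \FF^{(\e+2)n \times 2(\e+1)n}$ has full row rank; Theorem \ref{thm:polyminconv} then delivers the conclusion.

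The key step is to exploit two features of the convolution operator noted just after \eqref{eq:fundpropconvolution}: it is linear in its polynomial argument, so
\[
C_j(Q(\la)) = C_j(\Lambda_\e(\la)^T \otimes I_n) + C_j(\Delta R_\e(\la)^T), \qquad j = 0,1,
\]
and it satisfies the norm identity $\|C_j(P(\la))\|_F = \sqrt{j+1}\,\|P(\la)\|_F$. Combining this identity with the hypothesis \eqref{eq:dual21pertbound} yields the bounds
\[
\|C_0(\Delta R_\e(\la)^T)\|_2 \leq \|\Delta R_\e(\la)\|_F < \tfrac{1}{\sqrt{2}} < 1
\]
and
\[
\|C_1(\Delta R_\e(\la)^T)\|_2 \leq \sqrt{2}\,\|\Delta R_\e(\la)\|_F < 1,
\]
where the second bound is the tight one and explains the $1/\sqrt{2}$ threshold in the statement.

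By Theorem \ref{thm:unperturbed21block}(d) we have $\sigma_{\min}(C_0(\Lambda_\e(\la)^T \otimes I_n)) = \sigma_{\min}(C_1(\Lambda_\e(\la)^T \otimes I_n)) = 1$. Applying Weyl's perturbation theorem for singular values (\cite[Theorem 3.3.16]{Horn}) as in the proof of Corollary \ref{cor:21blockpert} gives $\sigma_{\min}(C_0(Q(\la))) > 0$, so $C_0(Q(\la))$ is nonsingular (being square), and $\sigma_{\min}(C_1(Q(\la))) > 0$, so $C_1(Q(\la))$ has full row rank. Theorem \ref{thm:polyminconv} then concludes that $Q(\la)$ is a minimal basis whose row degrees are all equal to $\e$ and the row degrees of any minimal basis dual to it are all equal to $1$.

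No real obstacle is anticipated: the argument is a verbatim analogue of the one for Corollary \ref{cor:21blockpert}, with the $3/(2(\e+1))$ factor replaced by $1$ because $C_0$ and $C_1$ of $\Lambda_\e(\la)^T \otimes I_n$ are isometries (up to the $\sqrt{j+1}$ factor). The only point requiring minor care is the bookkeeping of the $\sqrt{j+1}$ factor coming from $C_1$, which is what forces the perturbation threshold to be $1/\sqrt{2}$ rather than $1$.
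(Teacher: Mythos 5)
Your argument is exactly the paper's proof: both verify the hypotheses of Theorem \ref{thm:polyminconv} by bounding $\|C_j(\Delta R_\e(\la)^T)\|_2$ via the $\sqrt{j+1}$ norm identity, invoking Theorem \ref{thm:unperturbed21block}(b) and (d) for the unperturbed matrices, and applying Weyl's perturbation theorem for singular values. The proposal is correct and requires no changes.
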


\begin{proof} Observe that \eqref{eq:dual21pertbound} implies that $\|C_{1} (\Delta R_\e (\la)^T)\|_2 \leq \|C_{1} (\Delta R_\e (\la)^T)\|_F = \allowbreak \sqrt{2}\, \|\Delta R_\e (\la)^T\|_F < 1 = \sigma_{\min} (C_{1}(\Lambda_{\e}(\lambda)^T\otimes I_n))$, where we have used Theorem \ref{thm:unperturbed21block}(d). Therefore, $C_{1}(\Lambda_\e (\la)^T \otimes I_n + \Delta R_\e (\la)^T) = C_{1}(\Lambda_\e (\la)^T \otimes I_n) + C_{1}(\Delta R_\e (\la)^T)$ has full row rank, as a consequence of Theorem \ref{thm:unperturbed21block}(b) and Weyl's perturbation theorem for singular values. An analogous argument proves that $C_{0}(\Lambda_\e (\la)^T \otimes I_n + \Delta R_\e (\la)^T)$ is nonsingular. The result follows from Theorem \ref{thm:polyminconv}.
\end{proof}

Now, we are in the position of proving the main result of this section.

\begin{theorem} \label{thm:finalofstep2} Let $L_\e (\la)$ and $\Lambda_\e (\la)^T$ be the pencil and the row vector polynomial defined in \eqref{eq:Lk} and \eqref{eq:Lambda}, respectively, and let $\Delta \widetilde{\mathcal{L}}_{21} (\la)$ be any pencil of size $\e n \times (\e + 1) n$ such that
\begin{equation} \label{eq:final21pertbound}
\|\Delta \widetilde{\mathcal{L}}_{21} (\la) \|_F <   \frac{1}{2(\e +1)^{3/2}}.
\end{equation}
Then, there exists a matrix polynomial $\Delta R_\e (\lambda)^T$ with size $n \times (\e + 1) n$ and grade $\e$ such that
\begin{enumerate}
\item[\rm (a)]  $L_{\e}(\lambda)\otimes I_n+\Delta \widetilde{\mathcal{L}}_{21}(\lambda)$ and $\Lambda_{\e}(\lambda)^T\otimes I_n+\Delta R_\e (\lambda)^T$ are dual minimal bases, with all the row degrees of the former equal to $1$ and with all the row degrees of the latter equal to $\e$, and
\smallskip
\item[\rm (b)] $\displaystyle \|\Delta R_\e (\la) \|_F \leq
 \sqrt{2} \, (\e+1) \, \|\Delta \widetilde{\mathcal{L}}_{21} (\la) \|_F \,
<  \frac{1}{\sqrt{2}}$.
\end{enumerate}
\end{theorem}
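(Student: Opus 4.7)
My plan is to construct $\Delta R_\e(\lambda)^T$ explicitly as the minimum-Frobenius-norm solution of a linear system arising from the duality equation, then read off the minimal-bases properties from Corollaries \ref{cor:21blockpert} and \ref{cor:dual21blockpert}. First, since \eqref{eq:final21pertbound} is strictly stronger than \eqref{eq:21pertbound}, Corollary \ref{cor:21blockpert} applies and immediately gives the first half of (a), namely that $L_\e(\lambda)\otimes I_n + \Delta\widetilde{\mathcal{L}}_{21}(\lambda)$ is a minimal basis with all row degrees equal to $1$ whose dual has all row degrees equal to $\e$. To produce $\Delta R_\e(\lambda)$, I impose the duality condition
\[
\bigl(L_\e(\lambda)\otimes I_n + \Delta\widetilde{\mathcal{L}}_{21}(\lambda)\bigr)\bigl(\Lambda_\e(\lambda)\otimes I_n + \Delta R_\e(\lambda)\bigr)=0,
\]
assume $\Delta R_\e(\lambda)$ has grade $\e$, apply $C_0(\cdot)$ to both sides, and use \eqref{eq:fundpropconvolution} together with $(L_\e(\lambda)\otimes I_n)(\Lambda_\e(\lambda)\otimes I_n)=0$ to rewrite this as the linear matrix equation
\[
M\cdot C_0(\Delta R_\e(\lambda)) = b, \qquad M:=C_\e\bigl(L_\e(\lambda)\otimes I_n+\Delta\widetilde{\mathcal{L}}_{21}(\lambda)\bigr), \quad b:=-C_\e(\Delta\widetilde{\mathcal{L}}_{21}(\lambda))\,C_0(\Lambda_\e(\lambda)\otimes I_n).
\]

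Next I would show $M$ has full row rank by a Weyl estimate. Using Theorem \ref{thm:unperturbed21block}(c), the identity $\|C_\e(\cdot)\|_F=\sqrt{\e+1}\,\|\cdot\|_F$, and \eqref{eq:final21pertbound},
\[
\sigma_{\min}(M)\ge \sigma_{\min}(C_\e(L_\e(\lambda)\otimes I_n))-\|C_\e(\Delta\widetilde{\mathcal{L}}_{21}(\lambda))\|_2 \ge \tfrac{3}{2(\e+1)}-\sqrt{\e+1}\,\|\Delta\widetilde{\mathcal{L}}_{21}(\lambda)\|_F > \tfrac{1}{\e+1},
\]
so the system is consistent. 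I then take $C_0(\Delta R_\e(\lambda))$ to be the minimum-Frobenius-norm solution $M^{\dagger}b$, which satisfies $\|C_0(\Delta R_\e(\lambda))\|_F \le \|M^{\dagger}\|_2\,\|b\|_F < (\e+1)\|b\|_F$, and which by construction defines a grade-$\e$ matrix polynomial with $\|\Delta R_\e(\lambda)\|_F=\|C_0(\Delta R_\e(\lambda))\|_F$.

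The key step, and the main obstacle in getting the sharp constant $\sqrt{2}(\e+1)$ (rather than the wasteful $(\e+1)^{2}$ that a naive spectral-norm bound on $b$ would yield), is to fold the product back to the polynomial world: by \eqref{eq:fundpropconvolution},
\[
b = -C_0\bigl(\Delta\widetilde{\mathcal{L}}_{21}(\lambda)\,(\Lambda_\e(\lambda)\otimes I_n)\bigr),\quad\text{hence}\quad \|b\|_F = \|\Delta\widetilde{\mathcal{L}}_{21}(\lambda)\,(\Lambda_\e(\lambda)\otimes I_n)\|_F.
\]
Since $\Delta\widetilde{\mathcal{L}}_{21}(\lambda)$ is a pencil, Lemma \ref{lemma:normsproducts}(d) gives $\|b\|_F\le\sqrt{2}\,\|\Delta\widetilde{\mathcal{L}}_{21}(\lambda)\|_F$, so
\[
\|\Delta R_\e(\lambda)\|_F \le \sqrt{2}(\e+1)\,\|\Delta\widetilde{\mathcal{L}}_{21}(\lambda)\|_F,
\]
which is the first bound in (b); substituting \eqref{eq:final21pertbound} yields $\|\Delta R_\e(\lambda)\|_F<\tfrac{1}{\sqrt{2}\sqrt{\e+1}}\le\tfrac{1}{\sqrt{2}}$, completing (b).

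Finally, because $\|\Delta R_\e(\lambda)\|_F<1/\sqrt{2}$, Corollary \ref{cor:dual21blockpert} ensures that $\Lambda_\e(\lambda)^T\otimes I_n+\Delta R_\e(\lambda)^T$ is a minimal basis whose row degrees are all $\e$ and whose dual minimal basis has all row degrees $1$. The construction guarantees $(L_\e(\lambda)\otimes I_n+\Delta\widetilde{\mathcal{L}}_{21}(\lambda))(\Lambda_\e(\lambda)^T\otimes I_n+\Delta R_\e(\lambda)^T)^T=0$ and the row dimensions satisfy $\e n+n=(\e+1)n$, so Definition \ref{def:dualminimalbases} delivers (a). I expect all the bookkeeping in the Weyl and norm estimates to be routine; the only nontrivial step is spotting that one must rewrite $b$ via \eqref{eq:fundpropconvolution} before estimating, and exploit the fact that $\Delta\widetilde{\mathcal{L}}_{21}$ is a pencil in order to invoke the case $d+1=2$ of Lemma \ref{lemma:normsproducts}(d).
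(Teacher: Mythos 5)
Your proposal is correct and follows essentially the same route as the paper's proof: reduce the duality condition to the linear system $C_\e(L_\e(\lambda)\otimes I_n+\Delta\widetilde{\mathcal{L}}_{21}(\lambda))\,C_0(\Delta R_\e(\lambda))=-C_0(\Delta\widetilde{\mathcal{L}}_{21}(\lambda)(\Lambda_\e(\lambda)\otimes I_n))$, take the minimum-norm solution, bound $\|M^\dagger\|_2$ by $(\e+1)$ via Theorem \ref{thm:unperturbed21block}(c) and Weyl, bound the right-hand side by $\sqrt{2}\,\|\Delta\widetilde{\mathcal{L}}_{21}(\lambda)\|_F$ via Lemma \ref{lemma:normsproducts}(d), and close with Corollaries \ref{cor:21blockpert} and \ref{cor:dual21blockpert}. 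All constants and estimates match the paper's.
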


\begin{proof} The hypothesis \eqref{eq:final21pertbound} implies
$\|\Delta \widetilde{\mathcal{L}}_{21} (\la) \|_F < 3 / (2 (\e +1)^{3/2})$. Therefore, from Corollary \ref{cor:21blockpert}, we get that $L_{\e}(\lambda)\otimes I_n+\Delta \widetilde{\mathcal{L}}_{21}(\lambda)$ is a minimal basis with all its row degrees equal to $1$ and with all the row degrees of any minimal basis dual to it equal to $\e$, and, according to Theorem \ref{thm:pencilminconv}, we also have that $C_\e (L_{\e}(\lambda)\otimes I_n+\Delta \widetilde{\mathcal{L}}_{21}(\lambda))$ has full row rank. Using this fact, the goal of the rest of the proof is to show that there exists a matrix polynomial $\Delta R_\e (\lambda)^T$ with grade $\e$, that satisfies the bound in Theorem \ref{thm:finalofstep2}(b), and such that
\begin{equation} \label{eq:1prfinalstep2}
(L_{\e}(\lambda)\otimes I_n+\Delta \widetilde{\mathcal{L}}_{21}(\lambda)) \, (\Lambda_{\e}(\lambda)\otimes I_n+\Delta R_\e (\lambda) ) = 0 \, .
\end{equation}
Once this is proved, the proof of Theorem \ref{thm:finalofstep2} concludes by the application of Corollary \ref{cor:dual21blockpert}.

Since $(L_{\e}(\lambda)\otimes I_n) \, (\Lambda_{\e}(\lambda)\otimes I_n) = 0$, the equation \eqref{eq:1prfinalstep2} is equivalent to the following linear equation for $\Delta R_\e (\lambda)$
\begin{equation} \label{eq:2prfinalstep2}
(L_{\e}(\lambda)\otimes I_n+\Delta \widetilde{\mathcal{L}}_{21}(\lambda)) \, (\Delta R_\e (\lambda) ) = -\Delta \widetilde{\mathcal{L}}_{21}(\lambda) \, (\Lambda_{\e}(\lambda)\otimes I_n)  \, .
\end{equation}
Both sides of \eqref{eq:2prfinalstep2} have grade $\e + 1$, therefore, by using convolution matrices, \eqref{eq:2prfinalstep2} is equivalent to $C_0((L_{\e}(\lambda)\otimes I_n+\Delta \widetilde{\mathcal{L}}_{21}(\lambda)) \, (\Delta R_\e (\lambda) )) = -C_0(\Delta \widetilde{\mathcal{L}}_{21}(\lambda) \, (\Lambda_{\e}(\lambda)\otimes I_n))$, which in turn, by using \eqref{eq:fundpropconvolution}, is equivalent to
\begin{equation} \label{eq:3prfinalstep2}
C_\e (L_{\e}(\lambda)\otimes I_n+\Delta \widetilde{\mathcal{L}}_{21}(\lambda)) \, \, C_0 (\Delta R_\e (\lambda) ) = -C_0 (\Delta \widetilde{\mathcal{L}}_{21}(\lambda) \, (\Lambda_{\e}(\lambda)\otimes I_n))  \, .
\end{equation}
Observe that \eqref{eq:3prfinalstep2} is a consistent linear system for the unknown $ C_0 (\Delta R_\e (\lambda) )$, since $C_\e (L_{\e}(\lambda)\otimes I_n+\Delta \widetilde{\mathcal{L}}_{21}(\lambda))$ has full row rank, with minimum Frobenius norm solution
\begin{equation} \label{eq:4prfinalstep2}
 C_0 (\Delta R_\e (\lambda) ) = - C_\e (L_{\e}(\lambda)\otimes I_n+\Delta \widetilde{\mathcal{L}}_{21}(\lambda))^\dagger \, \,C_0 (\Delta \widetilde{\mathcal{L}}_{21}(\lambda) \, (\Lambda_{\e}(\lambda)\otimes I_n))  \, .
\end{equation}
From \eqref{eq:4prfinalstep2}, we get the bound
\begin{align} \nonumber
\|C_0 (\Delta R_\e (\lambda) ) \|_F & \leq  \| C_\e (L_{\e}(\lambda)\otimes I_n+\Delta \widetilde{\mathcal{L}}_{21}(\lambda))^\dagger \|_2 \, \, \|C_0 (\Delta \widetilde{\mathcal{L}}_{21}(\lambda) \, (\Lambda_{\e}(\lambda)\otimes I_n))\|_F \\
& = \frac{1}{\sigma_{\min} (C_\e (L_{\e}(\lambda)\otimes I_n+\Delta \widetilde{\mathcal{L}}_{21}(\lambda)))}\, \, \|C_0 (\Delta \widetilde{\mathcal{L}}_{21}(\lambda) \, (\Lambda_{\e}(\lambda)\otimes I_n))\|_F \, . \label{eq:5prfinalstep2}
\end{align}
In the rest of the proof, the two factors in the right-hand side of \eqref{eq:5prfinalstep2} are bounded. For bounding the first factor, we use Theorem \ref{thm:unperturbed21block}(c) and \eqref{eq:final21pertbound} as follows:
\begin{align} \nonumber
\frac{1}{\sigma_{\min} (C_\e (L_{\e}(\lambda)\otimes I_n+\Delta \widetilde{\mathcal{L}}_{21}(\lambda)))} & \leq \frac{1}{\sigma_{\min} (C_\e (L_{\e}(\lambda)\otimes I_n)) - \|C_\e (\Delta \widetilde{\mathcal{L}}_{21}(\lambda))\|_2} \\ \nonumber
& \leq \frac{1}{\sigma_{\min} (C_\e (L_{\e}(\lambda)\otimes I_n)) - \|C_\e (\Delta \widetilde{\mathcal{L}}_{21}(\lambda))\|_F} \\ \nonumber
& \leq \frac{1}{\frac{3}{2 (\e +1)} - \sqrt{\e+1} \, \| \Delta \widetilde{\mathcal{L}}_{21}(\lambda))\|_F} \\ \label{eq:6prfinalstep2}
& \leq \frac{1}{\frac{3}{2(\e +1)} - \frac{1}{2(\e +1)}} = (\e +1) \,.
\end{align}
For bounding the second factor of \eqref{eq:5prfinalstep2}, we use Lemma \ref{lemma:normsproducts}(d) with $d=1$ as follows:
\begin{equation}
\|C_0 (\Delta \widetilde{\mathcal{L}}_{21}(\lambda) \, (\Lambda_{\e}(\lambda)\otimes I_n))\|_F  = \|\Delta \widetilde{\mathcal{L}}_{21}(\lambda) \, (\Lambda_{\e}(\lambda)\otimes I_n)\|_F  \leq \sqrt{2} \, \| \Delta \widetilde{\mathcal{L}}_{21}(\lambda) \|_F
\, . \label{eq:7prfinalstep2}
\end{equation}
Finally, by combining (\ref{eq:5prfinalstep2}, \ref{eq:6prfinalstep2}, \ref{eq:7prfinalstep2}), the following bound is obtained
\[
\|\Delta R_\e (\lambda)\|_F =  \|C_0 (\Delta R_\e (\lambda) )\|_F \leq \sqrt{2} (\e+1)
 \, \|\Delta \widetilde{\mathcal{L}}_{21} (\la) \|_F \le \frac{1}{\sqrt{2(\e+1})},
\]
and the proof is finished.
\end{proof}

Theorem \ref{thm:finalofstep2} can be applied with $\e$ replaced by $\eta$ and $I_n$ replaced by $I_m$, i.e., to the transpose of the $(1,2)$-block of $\mathcal{L}(\lambda)+\Delta \widetilde{\mathcal{L}}(\lambda)$ in \eqref{eq:constant_reduction_big}. This allows us to state, as a corollary of Theorem \ref{thm:finalofstep2}, the final conclusion of this section in Theorem \ref{thm:corfinalofstep2}.

\begin{theorem} \label{thm:corfinalofstep2}  Let $\mathcal{L}(\lambda)+\Delta \widetilde{\mathcal{L}}(\lambda)$ be the pencil in \eqref{eq:constant_reduction_big} and let $d = \e + \eta + 1$.
If
\[
\max\{\|\Delta \widetilde{\mathcal{L}}_{21} (\la) \|_F , \|\Delta \widetilde{\mathcal{L}}_{12} (\la) \|_F \} < \frac{1}{2 \, d^{3/2}},
\]
then $\mathcal{L}(\lambda)+\Delta \widetilde{\mathcal{L}}(\lambda)$ is a strong block minimal bases pencil. Moreover, there exist matrix polynomials $\Delta R_\e (\lambda)^T$ and $\Delta R_\eta (\lambda)^T$ of grades $\e$ and $\eta$, respectively, such that $\Lambda_{\e}(\lambda)^T\otimes I_n+\Delta R_\e (\lambda)^T$ is a minimal basis dual to the $(2,1)$-block of $\mathcal{L}(\lambda)+\Delta \widetilde{\mathcal{L}}(\lambda)$ with all its row degrees equal to $\e$, $\Lambda_{\eta}(\lambda)^T\otimes I_m+\Delta R_\eta (\lambda)^T$ is a minimal basis dual to the transpose of the $(1,2)$-block of $\mathcal{L}(\lambda)+\Delta \widetilde{\mathcal{L}}(\lambda)$ with all its row degrees equal to $\eta$, and
\[
\max \{ \|\Delta R_\e (\la) \|_F , \|\Delta R_\eta (\la) \|_F \} \leq
\sqrt{2} \, d \, \max\{\|\Delta \widetilde{\mathcal{L}}_{21} (\la) \|_F , \|\Delta \widetilde{\mathcal{L}}_{12} (\la) \|_F \} \,
<  \frac{1}{\sqrt{2}} \, .
\]
\end{theorem}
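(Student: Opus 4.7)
The plan is to derive this theorem as an immediate corollary of Theorem \ref{thm:finalofstep2}, applied in two parallel ways: once directly to the $(2,1)$-block of $\mathcal{L}(\lambda)+\Delta \widetilde{\mathcal{L}}(\lambda)$, and once to the transpose of the $(1,2)$-block with the roles $(\e,n)$ replaced by $(\eta,m)$, as suggested in the paragraph preceding the statement. Since $\mathcal{L}(\lambda)+\Delta \widetilde{\mathcal{L}}(\lambda)$ in \eqref{eq:constant_reduction_big} has a zero $(2,2)$-block by construction, verifying that it is a strong block minimal bases pencil reduces, by Definition \ref{def:minlinearizations}, to establishing the required minimal-basis and row-degree properties for $K_1(\la) := L_\e(\la)\otimes I_n + \Delta \widetilde{\mathcal{L}}_{21}(\la)$ and for $K_2(\la) := L_\eta(\la)\otimes I_m + \Delta \widetilde{\mathcal{L}}_{12}(\la)^T$.

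First, I would check that the single global hypothesis $\max\{\|\Delta \widetilde{\mathcal{L}}_{21}\|_F,\|\Delta \widetilde{\mathcal{L}}_{12}\|_F\} < 1/(2 d^{3/2})$ implies the individual hypotheses needed by Theorem \ref{thm:finalofstep2} for each of the two blocks. Since $d = \e + \eta + 1$ with $\e,\eta \geq 0$, we have both $\e + 1 \leq d$ and $\eta + 1 \leq d$, so
\[
\frac{1}{2 d^{3/2}} \;\leq\; \min\!\left\{\frac{1}{2(\e+1)^{3/2}},\;\frac{1}{2(\eta+1)^{3/2}}\right\},
\]
and therefore bound \eqref{eq:final21pertbound} of Theorem \ref{thm:finalofstep2} is satisfied for $\Delta \widetilde{\mathcal{L}}_{21}(\la)$ with parameter $\e$, and its obvious analogue is satisfied for $\Delta \widetilde{\mathcal{L}}_{12}(\la)^T$ with parameter $\eta$.

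The two applications of Theorem \ref{thm:finalofstep2} then produce matrix polynomials $\Delta R_\e(\la)^T$ of grade $\e$ and $\Delta R_\eta(\la)^T$ of grade $\eta$ such that $(K_1(\la),\, \Lambda_\e(\la)^T\otimes I_n + \Delta R_\e(\la)^T)$ and $(K_2(\la),\, \Lambda_\eta(\la)^T\otimes I_m + \Delta R_\eta(\la)^T)$ are pairs of dual minimal bases, with the row degrees of $K_1,K_2$ all equal to $1$ and the row degrees of their duals all equal to $\e,\eta$ respectively. This is exactly Definition \ref{def:minlinearizations}, so $\mathcal{L}(\lambda)+\Delta \widetilde{\mathcal{L}}(\lambda)$ is a strong block minimal bases pencil.

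Finally, for the norm bound I would use part (b) of Theorem \ref{thm:finalofstep2} for each application together with $\e+1 \leq d$ and $\eta+1 \leq d$ to get
\[
\|\Delta R_\e(\la)\|_F \leq \sqrt{2}(\e+1)\|\Delta \widetilde{\mathcal{L}}_{21}(\la)\|_F \leq \sqrt{2}\,d\,\|\Delta \widetilde{\mathcal{L}}_{21}(\la)\|_F,
\]
and the analogous inequality for $\Delta R_\eta(\la)$; taking the maximum of both sides yields the displayed bound in the statement. The strict inequality $<1/\sqrt{2}$ then follows immediately from the hypothesis, since $\sqrt{2}\,d\cdot \tfrac{1}{2 d^{3/2}} = \tfrac{1}{\sqrt{2d}}\leq \tfrac{1}{\sqrt{2}}$ for $d\geq 1$. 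There is no real obstacle here beyond bookkeeping: the entire content of the theorem is the symmetrization of Theorem \ref{thm:finalofstep2} together with the uniform majorization by $d$ that absorbs $\e+1$ and $\eta+1$ into a single, clean hypothesis.
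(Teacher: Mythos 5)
Your proposal is correct and follows exactly the route the paper intends: the paper states this result as an immediate corollary of Theorem \ref{thm:finalofstep2} applied once to the $(2,1)$-block and once, with $(\e,n)$ replaced by $(\eta,m)$, to the transpose of the $(1,2)$-block, and your bookkeeping with $\e+1\leq d$ and $\eta+1\leq d$ to unify the hypotheses and the bounds is precisely what is needed.
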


The bound $\max \{ \|\Delta R_\e (\la) \|_F , \|\Delta R_\eta (\la) \|_F \} < 1/\sqrt{2}$ in the equation above has the main purpose to emphasize that the hypotheses of Corollary \ref{cor:dual21blockpert} hold. In addition, it motivates the assumptions in Lemmas \ref{lemm:Pdelta1} and \ref{lemm:Pdelta2} that allow us to get rid of nonlinear terms in bounding $\|\Delta P(\la)\|_F$.

\subsection{Third step: Mapping perturbations to a block Kronecker pencil onto the matrix polynomial} In this section, we combine the results in Sections \ref{sec:firststep} and \ref{sec:secondstep} to obtain our main backward error (or perturbation) results, that is, Theorem \ref{thm:perturbation} for general block Kronecker pencils as in \eqref{eq:linearization_general} and Theorem \ref{thm:perturbation2} for degenerate block Kronecker pencils in which either $\e = 0$ or $\eta =0$, that is, in which one of the anti-diagonal blocks and the zero block are not present. According to Remark \ref{rem:2empty} both cases require somewhat different treatments which makes the discussion longer.

The proofs of Theorems \ref{thm:perturbation} and \ref{thm:perturbation2} are direct consequences of previous results, but require some delicate (although elementary) norm manipulations which are simplified if the technical Lemmas \ref{lemm:Pdelta1} and \ref{lemm:Pdelta2} are stated in advance. The relevance of these lemmas comes from the fact that the strong block minimal bases pencil $\mathcal{L}(\lambda)+\Delta \widetilde{\mathcal{L}}(\lambda)$ in Theorem \ref{thm:corfinalofstep2} is a strong linearization of the matrix polynomial in \eqref{eq:polyperturbed3step}, as a consequence of Theorem \ref{thm:blockminlin}. The numerical constants appearing in Lemmas \ref{lemm:Pdelta1} and \ref{lemm:Pdelta2}, and in the rest of the analysis, are not optimal but allow us to keep the analysis simple.

\begin{lemma} \label{lemm:Pdelta1} Let $P(\la)$ and $P(\la) + \Delta P(\la)$ be the matrix polynomials in \eqref{eq:Pinsectionerrors} and \eqref{eq:polyperturbed3step}, respectively. If the matrix polynomials $\Delta R_\e (\lambda)$ and $\Delta R_\eta (\lambda)$ of grades $\e$ and $\eta$, respectively, satisfy $\|\Delta R_\e (\lambda)\|_F < 1/\sqrt{2}$ and $\|\Delta R_\eta (\lambda)\|_F < 1/\sqrt{2}$, then
\[
\|\Delta P(\la)\|_F \leq \sqrt{d} \left(5 \|\Delta \EL{11}\|_F + 4  \|\la M_1 + M_0\|_F \max\{\|\Delta R_\e (\lambda)\|_F,\|\Delta R_\eta (\lambda)\|_F\}\right) \, ,
\]
where $d = \e + \eta + 1$.
\end{lemma}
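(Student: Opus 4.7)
The plan is to compute $\Delta P(\la)=[P(\la)+\Delta P(\la)]-P(\la)$ directly from \eqref{eq:Pinsectionerrors} and \eqref{eq:polyperturbed3step}. Let $\mu:=\|\la M_1+M_0\|_F$, $\beta:=\|\Delta\EL{11}\|_F$, and $\alpha:=\max\{\|\Delta R_\e\|_F,\|\Delta R_\eta\|_F\}$. After expansion of the triple product in \eqref{eq:polyperturbed3step}, the unperturbed term $(\Lambda_\eta^T\otimes I_m)(\la M_1+M_0)(\Lambda_\e\otimes I_n)$ cancels, leaving $\Delta P(\la)$ as the sum of exactly seven cross products, each of which has at least one perturbation ($\Delta R_\eta^T$, $\Delta\EL{11}$, or $\Delta R_\e$) in place of the corresponding unperturbed factor.

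The next step is to bound each of these seven triples using Lemma \ref{lemma:normsproducts} in two nested applications: first contract the inner pair, then multiply by the outer factor. In each case at least one of the inner factors has grade $\leq 1$, so the inner contraction produces a constant of at most $\sqrt{2}$ (via part (c) for ordinary products, or (d)/(e) when the inner pair contains $\Lambda_k\otimes I_p$). The outer step then contributes at most $\min\{\sqrt{\eta+1},\sqrt{\e+2}\}\leq\sqrt{d}$ by the same parts of Lemma \ref{lemma:normsproducts}, using $\eta+1\leq d$ and $\e+1\leq d$. Therefore every triple is bounded by $\sqrt{2d}$ times a product of three scalars from $\{\mu,\beta\}$ together with one or two additional factors of $\alpha$.

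The hypothesis $\alpha<1/\sqrt{2}$ is now used to eliminate surplus factors of $\alpha$: specifically, $\sqrt{2d}\,X\alpha<\sqrt{d}\,X$ and $\sqrt{2d}\,X\alpha^2<\sqrt{d}\,X$ for any $X\in\{\mu,\beta\}$. A careful tally shows that the three terms that retain $M(\la)=\la M_1+M_0$ (rather than $\Delta\EL{11}$) produce a net contribution of $(2\sqrt{2}+1)\sqrt{d}\,\mu\alpha<4\sqrt{d}\,\mu\alpha$, while the four terms that contain $\Delta\EL{11}$ produce a net contribution of $(\sqrt{2}+3)\sqrt{d}\,\beta<5\sqrt{d}\,\beta$. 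Adding the two contributions yields the stated bound.

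The principal difficulty is merely disciplined bookkeeping: for each of the seven cross terms one must select the appropriate part of Lemma \ref{lemma:normsproducts} at each nested step, identify the grade of each intermediate polynomial, and invoke $\alpha<1/\sqrt{2}$ at exactly the right moment to convert surplus $\alpha$ or $\alpha^2$ factors into the advertised $\sqrt{d}$ constants. No new technique beyond what has already been developed in Sections \ref{sec:firststep}--\ref{sec:secondstep} is required.
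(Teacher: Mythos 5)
Your proposal is correct and follows essentially the same route as the paper: expand the triple product in \eqref{eq:polyperturbed3step}, cancel the unperturbed term to obtain the seven cross terms, bound each with nested applications of Lemma \ref{lemma:normsproducts} (the middle factor always being a pencil gives the $\sqrt{2}$, the outer factor the $\sqrt{d}$), and invoke $\|\Delta R_\e\|_F,\|\Delta R_\eta\|_F<1/\sqrt{2}$ to linearize the higher-order terms; your tallies $(2\sqrt{2}+1)<4$ and $(\sqrt{2}+3)<5$ are consistent with the stated constants.
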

\begin{proof} For brevity, we use in this proof the notation $\Lambda_{\e n}^T := \Lambda_{\e}(\lambda)^T\otimes I_n$ and omit the dependence on $\la$ of some matrix polynomials. From \eqref{eq:Pinsectionerrors} and \eqref{eq:polyperturbed3step}, we get that
\begin{align} \nonumber
\Delta P (\la) = & \Delta R_\eta^T(\la M_1 + M_0) \Lambda_{\e n} + \Lambda_{\eta m}^T \Delta \mathcal{L}_{11} \Lambda_{\e n} + \Delta R_\eta^T  \Delta\mathcal{L}_{11}  \Lambda_{\e n} \\ \nonumber & + \Lambda_{\eta m}^T (\la M_1 + M_0) \Delta R_\e +
\Delta R_{\eta}^T (\la M_1 + M_0) \Delta R_\e \\ & + \Lambda_{\eta m}^T \Delta\mathcal{L}_{11}  \Delta R_\e + \Delta R_\eta^T \Delta\mathcal{L}_{11}  \Delta R_\e \, . \label{eq:longDeltaP}
\end{align}
The result follows from bounding the Frobenius norm of each of the terms in the right-hand side of \eqref{eq:longDeltaP}. For this purpose, Lemma \ref{lemma:normsproducts} is used and, in addition, the inequalities $\|\Delta R_\e (\lambda)\|_F < 1/\sqrt{2}$ and $\|\Delta R_\eta (\lambda)\|_F < 1/\sqrt{2}$ are used in those terms that are not linear in $\Delta \mathcal{L}_{11} (\la)$, $\Delta R_\e (\lambda)$, and $\Delta R_\eta (\lambda)$ for bounding them with linear terms. Let us show how to bound only one of the terms in \eqref{eq:longDeltaP}, since the rest are bounded via similar procedures,
\begin{align*}
\|\Delta R_{\eta}^T (\la M_1 + M_0) \Delta R_\e\|_F &\leq \sqrt{d} \, \|\Delta R_{\eta}\|_F \|(\la M_1 + M_0) \Delta R_\e\|_F \\ &\leq \sqrt{2d} \, \|\Delta R_{\eta}\|_F \|\la M_1 + M_0\|_F \|\Delta R_\e\|_F \\ & \leq \sqrt{d} \, \|\la M_1 + M_0\|_F \|\Delta R_\e\|_F  \, .
\end{align*}
\end{proof}

Lemma \ref{lemm:Pdelta2} is the counterpart of Lemma \ref{lemm:Pdelta1} that is needed to deal with perturbations of degenerate block Kronecker pencils. The proof of Lemma \ref{lemm:Pdelta2} is omitted because it is similar to, and simpler than, the one of Lemma \ref{lemm:Pdelta1}.
\begin{lemma} \label{lemm:Pdelta2}
\begin{enumerate}
\item[\rm (a)] Let us consider the matrix polynomials
\begin{align*} P(\lambda) &= (\lambda M_1+M_0)(\Lambda_{\e}(\lambda)\otimes I_n), \\
P(\lambda) + \Delta P(\lambda) & = \left( \la M_1 +M_0 +\Delta \mathcal{L}_{11}(\lambda)  \right) \left( \Lambda_{\e}(\lambda)\otimes I_n+\Delta R_\e (\lambda)\right) .\end{align*}
If the matrix polynomial $\Delta R_\e (\lambda)$ satisfies $\|\Delta R_\e (\lambda)\|_F \allowbreak < 1/\sqrt{2}$, then
$$
\|\Delta P(\la)\|_F \leq 3 \, \|\Delta \EL{11}\|_F + \sqrt{2} \, \|\la M_1 + M_0\|_F \, \|\Delta R_\e (\lambda)\|_F \,.
$$

\item[\rm (b)] Let us consider the matrix polynomials
\begin{align*}
P(\lambda) & = (\Lambda_\eta(\lambda)^T\otimes I_m)(\lambda M_1+M_0), \\
P(\lambda) + \Delta P(\lambda) & = \left( \Lambda_\eta(\lambda)^T\otimes I_m+\Delta R_\eta (\lambda)^{T}\right) \left( \la M_1 +M_0 +\Delta \mathcal{L}_{11}(\lambda)  \right).
\end{align*}
If the matrix polynomial $\Delta R_\eta (\lambda)$ satisfies $\|\Delta R_\eta (\lambda)\|_F \allowbreak < 1/\sqrt{2}$, then
$$
\|\Delta P(\la)\|_F \leq 3 \,  \|\Delta \EL{11}\|_F + \sqrt{2} \, \|\la M_1 + M_0\|_F \, \|\Delta R_\eta (\lambda)\|_F \,.
$$
\end{enumerate}
\end{lemma}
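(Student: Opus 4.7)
The proof follows the same template as Lemma \ref{lemm:Pdelta1} but is considerably shorter, because in the degenerate cases only one of the two ``$\Lambda$''-factors appears: expanding the product that defines $P(\la)+\Delta P(\la)$ and subtracting $P(\la)$ produces three terms (one linear in $\Delta \EL{11}$, one linear in $\Delta R_\bullet$, and one bilinear cross-term) rather than the seven terms appearing in the expansion \eqref{eq:longDeltaP}. My plan is to bound each of these three terms in Frobenius norm via Lemma \ref{lemma:normsproducts}, and to absorb the bilinear cross-term into the term linear in $\Delta \EL{11}$ by using the hypothesis $\|\Delta R_\bullet(\la)\|_F < 1/\sqrt{2}$.

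For part (a), the expansion reads
\begin{equation*}
\Delta P(\la) \;=\; (\la M_1+M_0)\,\Delta R_\e(\la) \;+\; \Delta \EL{11}\,(\Lambda_\e(\la)\otimes I_n) \;+\; \Delta \EL{11}\,\Delta R_\e(\la).
\end{equation*}
I would apply Lemma \ref{lemma:normsproducts}(c) with the grade-$1$ pencil factor $\la M_1+M_0$ to bound the first summand by $\sqrt{2}\,\|\la M_1+M_0\|_F\,\|\Delta R_\e\|_F$; Lemma \ref{lemma:normsproducts}(d) with the grade-$1$ factor $\Delta \EL{11}$ to bound the second summand by $\sqrt{2}\,\|\Delta \EL{11}\|_F$; and Lemma \ref{lemma:normsproducts}(c) followed by the hypothesis $\|\Delta R_\e\|_F < 1/\sqrt{2}$ to control the third summand by $\sqrt{2}\,\|\Delta \EL{11}\|_F\,\|\Delta R_\e\|_F < \|\Delta \EL{11}\|_F$. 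Summing the three bounds and using $\sqrt{2}+1<3$ yields the claimed inequality.

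Part (b) is handled by the mirror argument, after writing
\begin{equation*}
\Delta P(\la) \;=\; \Delta R_\eta(\la)^T(\la M_1+M_0) \;+\; (\Lambda_\eta(\la)^T\otimes I_m)\,\Delta \EL{11} \;+\; \Delta R_\eta(\la)^T\,\Delta \EL{11},
\end{equation*}
and applying Lemma \ref{lemma:normsproducts}(c), (e), and (c) respectively, which produces identical constants. There is no genuine obstacle here: the argument is entirely routine norm arithmetic. The only technical care required is to invoke the grade-$1$ version of each inequality so that the factor $\sqrt{d+1}$ stays equal to $\sqrt{2}$, keeping the constant in front of $\|\Delta \EL{11}\|_F$ at $3$ rather than letting it grow with $d$.
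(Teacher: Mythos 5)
Your proposal is correct and follows exactly the route the paper intends: the paper omits this proof precisely because it is the shorter analogue of Lemma \ref{lemm:Pdelta1}, namely expanding the product into the three terms you list, bounding each with the grade-$1$ instances of Lemma \ref{lemma:normsproducts}, and absorbing the bilinear term via $\|\Delta R_\bullet\|_F<1/\sqrt{2}$, with $\sqrt{2}+1<3$ giving the stated constant.
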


Next, we state and prove the main results of Section \ref{sec:expansion} concerning perturbations of the block Kronecker pencils defined and studied in Section \ref{sec:linearization}. Recall that these pencils are strong linearizations of prescribed matrix polynomials enjoying constant shifting recovery properties for the minimal indices (see Theorems \ref{thm:strong} and \ref{thm:givenPblockKron}).

\begin{theorem}\label{thm:perturbation}
Let $P(\lambda) = \sum_{i=0}^d P_i \la^i \in \FF[\la]^{m\times n}$ and let $\EL{}$ be an $(\e,n,\eta,m)$-block Kronecker pencil with $d = \e + \eta + 1$ such that
$P(\lambda) = (\Lambda_\eta(\lambda)^T\otimes I_m)(\lambda M_1+M_0)(\Lambda_{\e}(\lambda)\otimes I_n)$, where $\la M_1 + M_0$ is the $(1,1)$-block in the natural partition of $\EL{}$ and $\Lambda_k (\la)$ is the vector polynomial in \eqref{eq:Lambda}. If $\Delta \EL{}$ is any pencil with the same size as $\EL{}$ and such that
\begin{equation} \label{eq:Lfinalbound}
\|\Delta \EL{}\|_F < (\sqrt{2}-1)^2 \, \frac{1}{d^{5/2}} \, \frac{1}{1 + \|\la M_1 + M_0\|_F},
\end{equation}
then $\EL{} + \Delta \EL{}$ is a strong linearization of a matrix polynomial $P(\la) + \Delta P(\la)$ with grade $d$ and such that
\[
\frac{\|\Delta P(\la)\|_F}{\|P(\la)\|_F} \leq 14\,  d^{5/2} \frac{\|\EL{} \|_F}{\|P(\la)\|_F} \, (1+ \|\la M_1 + M_0\|_F + \|\la M_1 + M_0\|_F^2) \,
\frac{\|\Delta \EL{} \|_F}{\|\EL{} \|_F} \, .
\]
In addition, the right minimal indices of $\EL{} + \Delta \EL{}$ are those of $P(\la) + \Delta P(\la)$ shifted by $\e$, and the left minimal indices of $\EL{} + \Delta \EL{}$ are those of $P(\la) + \Delta P(\la)$ shifted by $\eta$.
\end{theorem}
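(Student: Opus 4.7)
The plan is to execute the three-step program already described in the paragraphs preceding Section~\ref{sec:firststep}, using Theorems~\ref{thm:finalofstep1} and~\ref{thm:corfinalofstep2} as the two black boxes designed precisely for this purpose, and then applying Theorems~\ref{thm:blockminlin}--\ref{thm:indicesminbaseslin} together with Lemma~\ref{lemm:Pdelta1} to pull the analysis back from the linearization to the polynomial level. Throughout, the three restrictions on $\|\Delta\mathcal{L}\|_F$ imposed by these building blocks must be shown to follow from the single hypothesis \eqref{eq:Lfinalbound}, and the bookkeeping of norms must be carried out carefully enough to recover the stated constant $14\,d^{5/2}$.

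First, since $d\geq 1$ we have $d^{5/2}\geq d^2$, so \eqref{eq:Lfinalbound} implies the hypothesis \eqref{eq:boundL1} of Theorem~\ref{thm:finalofstep1}. That theorem produces constants $C,D$ realizing the strict equivalence \eqref{eq:constant_reduction_big}, together with the bound \eqref{eq:boundL12tilde} on $\max\{\|\Delta\widetilde{\mathcal{L}}_{12}\|_F,\|\Delta\widetilde{\mathcal{L}}_{21}\|_F\}$. Because $\mathcal{L}(\lambda)+\Delta\widetilde{\mathcal{L}}(\lambda)$ is strictly equivalent to $\mathcal{L}(\lambda)+\Delta\mathcal{L}(\lambda)$, the two pencils share the same complete eigenstructure, including minimal indices; therefore any structural conclusion eventually drawn about the former will transfer verbatim to the latter.

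Next I would verify that $\max\{\|\Delta\widetilde{\mathcal{L}}_{12}\|_F,\|\Delta\widetilde{\mathcal{L}}_{21}\|_F\}<1/(2d^{3/2})$, the hypothesis of Theorem~\ref{thm:corfinalofstep2}. Substituting \eqref{eq:boundL12tilde} and estimating the three resulting summands in $\|\Delta\mathcal{L}\|_F$, $d\,\|\Delta\mathcal{L}\|_F\|\lambda M_1+M_0\|_F/(\sqrt{2}-1)$, and $d\,\|\Delta\mathcal{L}\|_F^2/(\sqrt{2}-1)$ against \eqref{eq:Lfinalbound} shows that each lies below $1/(2d^{3/2})$: the exponent $5/2$ and the factor $(\sqrt{2}-1)^2$ in \eqref{eq:Lfinalbound} were calibrated precisely to make this work. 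Theorem~\ref{thm:corfinalofstep2} then gives that $\mathcal{L}+\Delta\widetilde{\mathcal{L}}$ is a strong block minimal bases pencil and furnishes $\Delta R_\e(\lambda)^T$ and $\Delta R_\eta(\lambda)^T$ of grades $\e$ and $\eta$ making $\Lambda_\e(\lambda)^T\otimes I_n+\Delta R_\e(\lambda)^T$ and $\Lambda_\eta(\lambda)^T\otimes I_m+\Delta R_\eta(\lambda)^T$ dual minimal bases of the anti-diagonal blocks, with row degrees $\e$ and $\eta$ and with $\max\{\|\Delta R_\e\|_F,\|\Delta R_\eta\|_F\}<1/\sqrt{2}$. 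Theorem~\ref{thm:blockminlin}(b) then identifies $\mathcal{L}+\Delta\widetilde{\mathcal{L}}$ as a strong linearization of the polynomial $P+\Delta P$ of \eqref{eq:polyperturbed3step}, regarded of grade $1+\e+\eta=d$, and Theorem~\ref{thm:indicesminbaseslin} supplies the shifts $\e$ and $\eta$ in the minimal indices. Strict equivalence transports the same conclusion to $\mathcal{L}+\Delta\mathcal{L}$.

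The remaining task is the norm bound. Since $\max\{\|\Delta R_\e\|_F,\|\Delta R_\eta\|_F\}<1/\sqrt{2}$, Lemma~\ref{lemm:Pdelta1} applies and gives $\|\Delta P\|_F\leq\sqrt{d}\bigl(5\|\Delta\mathcal{L}_{11}\|_F+4\|\lambda M_1+M_0\|_F\max\{\|\Delta R_\e\|_F,\|\Delta R_\eta\|_F\}\bigr)$. I then cascade the bounds: replace $\max\{\|\Delta R_\e\|_F,\|\Delta R_\eta\|_F\}$ by $\sqrt{2}\,d\,\max\{\|\Delta\widetilde{\mathcal{L}}_{21}\|_F,\|\Delta\widetilde{\mathcal{L}}_{12}\|_F\}$ from Theorem~\ref{thm:corfinalofstep2}, then replace that maximum by the estimate \eqref{eq:boundL12tilde}, and finally use $\|\Delta\mathcal{L}_{11}\|_F\leq\|\Delta\mathcal{L}\|_F$ and $\|\Delta\mathcal{L}\|_F<1$ to absorb the quadratic remainder. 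Collecting powers of $d$ and grouping the polynomial factor as $1+\|\lambda M_1+M_0\|_F+\|\lambda M_1+M_0\|_F^2$, then multiplying and dividing by $\|\mathcal{L}\|_F$, should yield the final bound with the advertised prefactor $14\,d^{5/2}$. The main obstacle I expect is not any deep new idea, since all the heavy machinery is prepared in Sections~\ref{sec:firststep}--\ref{sec:secondstep}; rather it is the careful constant-tracking in this last cascade, and the preliminary verification that the single inequality \eqref{eq:Lfinalbound} is simultaneously tight enough to satisfy the three nested hypotheses of Theorems~\ref{thm:finalofstep1}, \ref{thm:corfinalofstep2}, and Lemma~\ref{lemm:Pdelta1} while still loose enough to allow the final constant to be absorbed into $14$.
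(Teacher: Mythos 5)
Your proposal follows essentially the same route as the paper's proof: verify that \eqref{eq:Lfinalbound} implies \eqref{eq:boundL1}, apply Theorem~\ref{thm:finalofstep1} to obtain the strict equivalence, check via \eqref{eq:boundL12tilde} that the hypothesis of Theorem~\ref{thm:corfinalofstep2} holds, invoke Theorems~\ref{thm:blockminlin} and~\ref{thm:indicesminbaseslin} for the strong linearization and the shifts, and finally cascade the bound on $\max\{\|\Delta R_\e\|_F,\|\Delta R_\eta\|_F\}$ into Lemma~\ref{lemm:Pdelta1}. The only imprecision is your claim that each of the three summands in \eqref{eq:boundL12tilde} is individually below $1/(2d^{3/2})$ (that alone would only bound the sum by $3/(2d^{3/2})$); the paper instead bounds the whole expression at once by $\|\Delta\mathcal{L}(\la)\|_F\,\tfrac{d}{\sqrt{2}-1}(1+\|\la M_1+M_0\|_F)<(\sqrt{2}-1)/d^{3/2}$, using $2<d/(\sqrt{2}-1)$, which is the calibration you correctly anticipate.
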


\begin{proof}
Observe that the condition \eqref{eq:Lfinalbound} implies that \eqref{eq:boundL1} holds. Therefore, we can apply Theorem \ref{thm:finalofstep1} to $\EL{} + \Delta \EL{}$ for proving that it is strictly equivalent to the pencil $\EL{} + \Delta \widetilde{\mathcal{L}} (\la)$ in \eqref{eq:constant_reduction_big} and thus both pencils have the same complete eigenstructures. By combining \eqref{eq:Lfinalbound}, which implies $d \|\Delta \EL{}\|_F < (\sqrt{2}-1)$, with \eqref{eq:boundL12tilde}, we get
the following bound
\begin{align} \label{eq:aux1boundfinal}
\max\{\|\Delta \widetilde{\mathcal{L}}_{21}(\lambda)\|_{F} ,  \|\Delta \widetilde{\mathcal{L}}_{12}(\lambda)\|_{F} \}
&\leq \|\Delta \EL{}\|_F \left(2 + \frac{d}{\sqrt{2}-1} \, \|\la M_1 + M_0\|_F  \right) \, \\
& \leq (\sqrt{2}-1) \, \frac{1}{d^{3/2}} < \frac12  \, \frac{1}{d^{3/2}} \,, \nonumber
\end{align}
which allows us to apply Theorem \ref{thm:corfinalofstep2} to $\EL{} + \Delta \widetilde{\mathcal{L}} (\la)$. Then, $\EL{} + \Delta \widetilde{\mathcal{L}} (\la)$ is a strong block minimal bases pencil which, according to Theorem \ref{thm:blockminlin}, is a strong linearization of the matrix polynomial $P(\la) + \Delta P(\la)$ in \eqref{eq:polyperturbed3step}. Moreover, Theorem \ref{thm:indicesminbaseslin} guarantees that the right minimal indices of $\EL{} + \Delta \widetilde{\mathcal{L}} (\la)$ are those of $P(\la) + \Delta P(\la)$ shifted by $\e$, and that the left minimal indices of $\EL{} + \Delta \widetilde{\mathcal{L}} (\la)$ are those of $P(\la) + \Delta P(\la)$ shifted by $\eta$.
The same holds for $\EL{} + \Delta \EL{}$, since it is strictly equivalent to $\EL{} + \Delta \widetilde{\mathcal{L}} (\la)$.
It only remains to bound $\|\Delta P (\la) \|_F$. For this purpose, we combine Lemma \ref{lemm:Pdelta1} and the bound on
$\max \{ \|\Delta R_\e (\la) \|_F , \|\Delta R_\eta (\la) \|_F \}$ in Theorem \ref{thm:corfinalofstep2}. By using Theorem \ref{thm:corfinalofstep2} and \eqref{eq:aux1boundfinal}, the inequality
\[
\max \{ \|\Delta R_\e (\la) \|_F , \|\Delta R_\eta (\la) \|_F \} \leq \frac{\sqrt{2}}{(\sqrt{2}-1)} \, d^2 \,
\|\Delta \EL{}\|_F \left(1 +  \|\la M_1 + M_0\|_F  \right),
\]
is proved. If this inequality is introduced in the bound of Lemma \ref{lemm:Pdelta1}, then we obtain
\[
\|\Delta P(\la)\|_F \leq 14 \,  d^{5/2} \, \|\Delta \EL{}\|_F \, (1+ \|\la M_1 + M_0\|_F + \|\la M_1 + M_0\|_F^2) \, ,
\]
and the proof concludes.
\end{proof}

Next, we state and prove Theorem \ref{thm:perturbation2}, which is the counterpart of Theorem \ref{thm:perturbation} for degenerate block Kronecker pencils. For brevity, degenerate block Kronecker pencils are called either $(0,n,\eta,m)$-block Kronecker pencils when the second block row in \eqref{eq:linearization_general} is missing or $(\e,n,0,m)$-block Kronecker pencils when the second block column in \eqref{eq:linearization_general} is missing, i.e., they correspond to taking either $\e=0$ or $\eta = 0$. We emphasize that the perturbation bound in Theorem \ref{thm:perturbation2} is smaller than the one in Theorem \ref{thm:perturbation} because performing the strict equivalence \eqref{eq:constant_reduction_big} is not needed in the degenerate case. The most relevant difference in Theorem \ref{thm:perturbation2} with respect to the bound in Theorem \ref{thm:perturbation} is that the term $\|\la M_1 + M_0\|_F^2$ is not present, which is in agreement with the first order results obtained in \cite{deTeran2015imajna} for Fiedler matrices (not pencils) of scalar monic polynomials.

\begin{theorem} \label{thm:perturbation2}
Let $P(\lambda) = \sum_{i=0}^d P_i \la^i \in \FF[\la]^{m\times n}$ and let $\EL{}$ be either a $(0,n,\eta,m)$-block Kronecker pencil with $d = \eta + 1$ such that
$P(\lambda) = (\Lambda_\eta(\lambda)^T\otimes I_m)(\lambda M_1+M_0)$ or an $(\e,n,0,m)$-block Kronecker pencil with $d = \e + 1$ such that
$P(\lambda) = (\lambda M_1+M_0)(\Lambda_\e (\lambda) \otimes I_n)$, where $\la M_1 + M_0$ is the $(1,1)$-block in the natural partition of $\EL{}$ and $\Lambda_k (\la)$ is the vector polynomial in \eqref{eq:Lambda}. If $\Delta \EL{}$ is any pencil with the same size as $\EL{}$ and such that
\begin{equation} \label{eq:Lfinalbound2}
\|\Delta \EL{}\|_F < \frac{1}{2\, d^{3/2}}   \,  ,
\end{equation}
then $\EL{} + \Delta \EL{}$ is a strong linearization of a matrix polynomial $P(\la) + \Delta P(\la)$ with grade $d$ and such that
\[
\frac{\|\Delta P(\la)\|_F}{\|P(\la)\|_F} \leq 2\,  d \, \frac{\|\EL{} \|_F}{\|P(\la)\|_F} \, (1+ \|\la M_1 + M_0\|_F) \,
\frac{\|\Delta \EL{} \|_F}{\|\EL{} \|_F} \, .
\]
In addition, the right minimal indices of $\EL{} + \Delta \EL{}$ are those of $P(\la) + \Delta P(\la)$ shifted by $\e$, and the left minimal indices of $\EL{} + \Delta \EL{}$ are those of $P(\la) + \Delta P(\la)$ shifted by $\eta$, where either $\e =0$ or $\eta =0$.
\end{theorem}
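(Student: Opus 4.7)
The plan is to exploit Remark \ref{rem:2empty} directly: in the degenerate case one of the anti-diagonal blocks (together with the zero block of the natural $2 \times 2$ partition) is absent, so the quadratic Sylvester-like system solved in Section \ref{sec:firststep} is simply not needed. Hence the preliminary strict equivalence \eqref{eq:constant_reduction_big} is unnecessary, we set $\Delta \widetilde{\mathcal{L}}(\lambda) = \Delta \mathcal{L}(\lambda)$, and we start the analysis directly at the second step. I will treat the case $\varepsilon = 0$ (i.e.\ an $(0,n,\eta,m)$-block Kronecker pencil) explicitly; the case $\eta = 0$ is entirely symmetric and follows by transposition.

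In this setting, $K_1(\lambda)$ is empty (so Remark \ref{rem:empty} forces $N_1(\lambda) = I_n$ and there is nothing to perturb in it), while $K_2(\lambda)^T = L_\eta(\lambda)^T \otimes I_m + \Delta \mathcal{L}_{12}(\lambda)$. The hypothesis $\|\Delta \mathcal{L}(\lambda)\|_F < 1/(2d^{3/2})$ with $d = \eta + 1$ gives at once $\|\Delta \mathcal{L}_{12}(\lambda)\|_F < 1/(2(\eta+1)^{3/2})$, so Theorem \ref{thm:finalofstep2} (applied with the obvious relabelling $\varepsilon \rightarrow \eta$, $I_n \rightarrow I_m$ and to the transposed block) produces a matrix polynomial $\Delta R_\eta(\lambda)^T$ of grade $\eta$ such that $L_\eta(\lambda) \otimes I_m + \Delta \mathcal{L}_{12}(\lambda)^T$ and $\Lambda_\eta(\lambda)^T \otimes I_m + \Delta R_\eta(\lambda)^T$ are dual minimal bases (with row degrees all $1$ and all $\eta$, respectively), together with the bound
\[
\|\Delta R_\eta(\lambda)\|_F \leq \sqrt{2}(\eta+1)\,\|\Delta \mathcal{L}_{12}(\lambda)\|_F \leq \sqrt{2}\,d\,\|\Delta \mathcal{L}(\lambda)\|_F < \frac{1}{\sqrt{2}}.
\]

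Consequently, $\mathcal{L}(\lambda) + \Delta \mathcal{L}(\lambda)$ is a strong block minimal bases pencil, and Theorems \ref{thm:blockminlin} and \ref{thm:indicesminbaseslin} identify it as a strong linearization of
\[
P(\lambda) + \Delta P(\lambda) = \bigl(\Lambda_\eta(\lambda)^T \otimes I_m + \Delta R_\eta(\lambda)^T\bigr)\bigl(\lambda M_1 + M_0 + \Delta \mathcal{L}_{11}(\lambda)\bigr),
\]
with left minimal indices of the pencil being those of $P + \Delta P$ shifted by $\eta$ and right minimal indices unshifted (since $\varepsilon = 0$). This is exactly the required shifting relation.

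It remains to bound $\|\Delta P(\lambda)\|_F$. Since $\|\Delta R_\eta(\lambda)\|_F < 1/\sqrt{2}$, Lemma \ref{lemm:Pdelta2}(b) yields
\[
\|\Delta P(\lambda)\|_F \leq 3\,\|\Delta \mathcal{L}_{11}\|_F + \sqrt{2}\,\|\lambda M_1 + M_0\|_F\,\|\Delta R_\eta(\lambda)\|_F \leq \bigl(3 + 2d\,\|\lambda M_1 + M_0\|_F\bigr)\|\Delta \mathcal{L}(\lambda)\|_F,
\]
and, since $d \geq 2$ in any nontrivial instance (the case $d=1$ being trivial), $3 \leq 2d$, so this is bounded by $2d(1 + \|\lambda M_1 + M_0\|_F)\|\Delta \mathcal{L}(\lambda)\|_F$. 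Dividing by $\|P(\lambda)\|_F$ and multiplying and dividing the right-hand side by $\|\mathcal{L}(\lambda)\|_F$ gives the stated inequality. The main (very mild) obstacle is merely the bookkeeping of translating Theorem \ref{thm:finalofstep2} from the $(2,1)$-block to the transposed $(1,2)$-block; everything else is an immediate specialisation of results already proved in Sections \ref{sec:secondstep} and the beginning of the third step.
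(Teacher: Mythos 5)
Your proposal is correct and follows essentially the same route as the paper: skip the first step (no strict equivalence needed in the degenerate case), apply Theorem \ref{thm:finalofstep2}/\ref{thm:corfinalofstep2} directly to the surviving anti-diagonal block to get $\Delta R_\eta$ with $\|\Delta R_\eta(\la)\|_F \leq \sqrt{2}\,d\,\|\Delta\EL{}\|_F$, invoke Theorems \ref{thm:blockminlin} and \ref{thm:indicesminbaseslin}, and finish with Lemma \ref{lemm:Pdelta2}. Your explicit handling of the constant ($3 \leq 2d$ for $d\geq 2$, with $d=1$ trivial) is a detail the paper's sketch leaves implicit, but it is the same argument.
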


\begin{proof} We simply sketch the proof, since it follows the same ideas as the proof of Theorem \ref{thm:perturbation}. The reader should bear in mind Remark \ref{rem:empty}. In the degenerate case, we can apply Theorem \ref{thm:corfinalofstep2} directly to $\EL{} + \Delta \EL{} = \EL{} + \Delta \widetilde{\mathcal{L}} (\la)$. After that, it only remains to prove the bound on $\|\Delta P(\la)\|_F$. For this purpose, we combine Lemma \ref{lemm:Pdelta2} and the bound on
$\max \{ \|\Delta R_\e (\la) \|_F , \|\Delta R_\eta (\la) \|_F \}$ in Theorem \ref{thm:corfinalofstep2} for obtaining
\[
\|\Delta P (\la) \|_F \leq 2 \, d  \, \|\Delta \EL{} \|_F \, (1 + \|\la M_1 + M_0 \|_F )\, .
\]
This ends the proof.
\end{proof}

Finally, we discuss when Theorems \ref{thm:perturbation} and \ref{thm:perturbation2} guarantee backward stability of complete polynomial eigenproblems solved via the staircase or the QZ algorithms applied to a block Kronecker pencil. We restrict the discussion to nondegenerate block Kronecker pencils, since the obtained conclusions are also valid for the degenerate case. According to our discussion at the beginning of Section \ref{sec:expansion}, to equation \eqref{eq:perturbationbound}, and to Theorem \ref{thm:perturbation}, if
\begin{equation}  \label{eq:C_pl}
C_{P,\mathcal{L}} := 14\,  d^{5/2} \frac{\|\EL{} \|_F}{\|P(\la)\|_F} \, (1+ \|\la M_1 + M_0\|_F + \|\la M_1 + M_0\|_F^2)
\end{equation}
is a moderate number, then the backward stability is guaranteed. From \eqref{eq:C_pl}, it is clear that the following elementary lemma is useful for our discussion.
\begin{lemma} \label{lemm:quotient} Let $P(\lambda) = \sum_{k=0}^d P_k \la^k \in \FF[\la]^{m\times n}$ and let $\EL{}$ be an $(\e,n,\eta,m)$-block Kronecker pencil with $d = \e + \eta + 1$ such that $P(\lambda) = (\Lambda_\eta(\lambda)^T\otimes I_m)(\lambda M_1+M_0)(\Lambda_{\e}(\lambda)\otimes I_n)$. Then:
\begin{enumerate}
\item[\rm (a)] $\displaystyle \frac{\|\EL{} \|_F}{\|P(\la)\|_F} = \sqrt{\displaystyle \left( \frac{\|\la M_1 + M_0\|_F}{\|P(\la)\|_F }\right)^2 +
    \frac{2(n\e + m \eta)}{\|P(\la)\|_F^2}} \geq \frac{1}{\sqrt{2\, d} }$.
\item[\rm (b)] $\displaystyle \|\la M_1 + M_0\|_F \, \geq \, \|P (\la) \|_F /\sqrt{2\, d}$.
\end{enumerate}
\end{lemma}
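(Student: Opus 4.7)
The plan is to handle (a) and (b) separately: the equality in (a) is a direct Frobenius-norm bookkeeping on the block partition of $\EL{}$, part (b) is obtained from two back-to-back applications of Lemma \ref{lemma:normsproducts}, and the lower bound in (a) then falls out of (b) together with the trivial fact that the $(1,1)$-block of $\EL{}$ is a submatrix, whence $\|\la M_1 + M_0\|_F \leq \|\EL{}\|_F$.

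For the equality in (a), I would partition $\EL{}$ as in \eqref{eq:linearization_general} and use that the squared Frobenius norm of a block matrix polynomial is additive over its blocks, which gives
\[
\|\EL{}\|_F^2 = \|\la M_1 + M_0\|_F^2 + \|L_\eta(\la)^T \otimes I_m\|_F^2 + \|L_\e(\la) \otimes I_n\|_F^2.
\]
Inspection of \eqref{eq:Lk} shows that $\|L_k(\la)\|_F^2 = 2k$ (there are $k$ entries equal to $-1$ and $k$ entries equal to $\la$ with coefficient $1$), and the identity $\|A(\la) \otimes B\|_F = \|A(\la)\|_F \, \|B\|_F$, which one verifies coefficient-by-coefficient from the analogous property for constant matrices, yields $\|L_\e(\la) \otimes I_n\|_F^2 = 2\e n$ and $\|L_\eta(\la)^T \otimes I_m\|_F^2 = 2\eta m$. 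Dividing by $\|P(\la)\|_F^2$ produces the identity in (a).

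For (b), I would use the factorization $P(\la) = (\Lambda_\eta(\la)^T \otimes I_m)(\la M_1 + M_0)(\Lambda_\e(\la) \otimes I_n)$ and apply Lemma \ref{lemma:normsproducts} twice. Setting $Q(\la) := (\la M_1 + M_0)(\Lambda_\e(\la) \otimes I_n)$, which has grade at most $\e + 1$, part (e) of that lemma yields
\[
\|P(\la)\|_F \leq \min\{\sqrt{\e+2},\sqrt{\eta+1}\} \, \|Q(\la)\|_F,
\]
and part (d) applied to the grade-$1$ pencil $\la M_1 + M_0$ yields $\|Q(\la)\|_F \leq \min\{\sqrt{2},\sqrt{\e+1}\} \, \|\la M_1 + M_0\|_F$. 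Combining the two bounds reduces (b) to the elementary inequality
\[
\min\{\e+2,\eta+1\} \cdot \min\{2,\e+1\} \leq 2(\e + \eta + 1) = 2d,
\]
which I would verify by splitting into the cases $\e = 0$ (where the right-hand minimum collapses to $1$) and $\e \geq 1$ (where the right-hand minimum equals $2$ and the left-hand one is at most $\eta + 1 \leq d$). The lower bound in (a) then follows immediately from (b) via $\|\la M_1 + M_0\|_F \leq \|\EL{}\|_F$.

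No step is genuinely difficult; the only place that calls for a little care is the case split needed to extract the clean constant $\sqrt{2d}$ from the product of the two minima in (b).
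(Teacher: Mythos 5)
Your proof is correct, but for part (b) it takes a genuinely different route from the paper's. The paper works at the level of the matrix coefficients: it invokes the anti-diagonal identity \eqref{eq:condition_coeff}, bounds each $\|P_k\|_F$ by the sum of the norms of the blocks of $M_0$ and $M_1$ on the two relevant anti-diagonals, and applies Cauchy--Schwarz (with at most $2d$ terms) to get $\|P(\la)\|_F \leq \sqrt{2d}\,\|\la M_1+M_0\|_F$ directly; summing over $k$ works out cleanly because each block of $M_0$ and $M_1$ lies on exactly one anti-diagonal. You instead work at the level of the polynomial product, applying Lemma \ref{lemma:normsproducts}(d) and then (e) to the factorization $P(\la)=(\Lambda_\eta(\la)^T\otimes I_m)(\la M_1+M_0)(\Lambda_\e(\la)\otimes I_n)$ and then checking the elementary inequality $\min\{\e+2,\eta+1\}\cdot\min\{2,\e+1\}\leq 2d$ by a case split on $\e$. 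Both arguments are valid, both are short, and both yield exactly the constant $\sqrt{2d}$; the paper's has the minor advantage of not needing a case split, while yours has the advantage of reusing only the generic norm machinery of Lemma \ref{lemma:normsproducts} rather than the specific coefficient formula \eqref{eq:condition_coeff}. Your treatment of the equality in (a) and of the final lower bound via $\|\la M_1+M_0\|_F\leq\|\EL{}\|_F$ coincides with the paper's.
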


\begin{proof} The equality in part (a) follows from \eqref{eq:linearization_general} and Definition \ref{def:norm}. The inequality follows from \eqref{eq:condition_coeff}, which implies, for $k=0,1,\ldots, d$,
\begin{align*}
\|P_k \|_F & \leq  \sum_{i+j=\pdeg+2-k} \|[M_1]_{ij}\|_F + \sum_{i+j=\pdeg+1-k} \|[M_0]_{ij}\|_F \\
& \leq \sqrt{2d} \sqrt{\sum_{i+j=\pdeg+2-k} \|[M_1]_{ij}\|_F^2 + \sum_{i+j=\pdeg+1-k} \|[M_0]_{ij}\|_F^2} \, .
\end{align*}
 This in turn implies $\|P (\la) \|_F  \leq \sqrt{2d} \, \|\la M_1 + M_0\|_F$, which is the result in part (b), and gives the inequality in part (a).
\end{proof}

From \eqref{eq:C_pl} and Lemma \ref{lemm:quotient}(a), we see that if $\|P(\la)\|_F \ll 1$, then $C_{P,\mathcal{L}}$ is huge, since $2(n\e + m \eta)/\|P(\la)\|_F^2$ is huge. Moreover, from \eqref{eq:C_pl} and Lemma \ref{lemm:quotient}(b), we see that if $\|P(\la)\|_F \gg 1$, then $C_{P,\mathcal{L}}$ is also huge, since $\|\la M_1 + M_0 \|_F$ is huge and $\|\EL{} \|_F/\|P(\la)\|_F \geq 1/\sqrt{2\, d}$. Therefore, one should scale $P(\la)$ in advance in such a way that $\|P(\la)\|_F = 1$ to have a chance of $C_{P,\mathcal{L}}$ is moderate. But even in this case, $C_{P,\mathcal{L}}$ is large if $\|\la M_1 + M_0\|_F$ is large. This happens, for instance, in the last pencil in Example \ref{ex-blockKron} if the arbitrary matrices $A$ and/or $B$ have huge norms.

As a consequence of the discussion above and Theorems \ref{thm:perturbation} and \ref{thm:perturbation2}, we can state the informal Corollary \ref{cor:FINperturbation}, which establishes sufficient conditions for the backward stability  of the solution of complete polynomial eigenproblems via block Kronecker pencils  (degenerate or not). For the sake of clarity and simplicity any nonessential numerical constant is omitted in Corollary \ref{cor:FINperturbation}.

\begin{corollary}\label{cor:FINperturbation}
Let $P(\lambda) = \sum_{i=0}^d P_i \la^i \in \FF[\la]^{m\times n}$ with $\|P(\la)\|_F = 1$. Let $\EL{}$ be an $(\e,n,\eta,m)$-block Kronecker pencil as in \eqref{eq:linearization_general} with $d = \e + \eta + 1$ and such that
$P(\lambda) = (\Lambda_\eta(\lambda)^T\otimes I_m)(\lambda M_1+M_0)(\Lambda_{\e}(\lambda)\otimes I_n)$. Let $\Delta \EL{}$ be any pencil with the same size as $\EL{}$ and with $\|\Delta \EL{}\|_F$ sufficiently small. If $\|\la M_1 + M_0\|_F \approx \|P(\la)\|_F$, then $\EL{} + \Delta \EL{}$ is a strong linearization of a matrix polynomial $P(\la) + \Delta P(\la)$ with grade $d$ and such that
\begin{equation} \label{eq:informalcor}
\frac{\|\Delta P(\la)\|_F}{\|P(\la)\|_F} \lesssim \,  d^{3} \,\sqrt{m+n} \, \,
\frac{\|\Delta \EL{} \|_F}{\|\EL{} \|_F} \, .
\end{equation}
In addition, the right minimal indices of $\EL{} + \Delta \EL{}$ are those of $P(\la) + \Delta P(\la)$ shifted by $\e$, and the left minimal indices of $\EL{} + \Delta \EL{}$ are those of $P(\la) + \Delta P(\la)$ shifted by $\eta$. In particular, this corollary holds for all permuted Fiedler pencils presented in \cite[Theorem 4.5]{Dopico:2016:BlockKronecker}, since for them $\|\la M_1 + M_0\|_F = \|P(\la)\|_F$.
\end{corollary}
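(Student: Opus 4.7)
The plan is to derive the corollary as a direct simplification of Theorem~\ref{thm:perturbation}, reading the informal symbol ``$\approx$'' as meaning that the ratio $\|\la M_1+M_0\|_F/\|P(\la)\|_F$ is bounded by a moderate constant (say, of order~$1$); in particular the hypothesis $\|P(\la)\|_F=1$ forces $\|\la M_1+M_0\|_F$ to be bounded and bounded away from~$0$, which is exactly what we need in the error constant
\[
C_{P,\mathcal{L}} = 14\, d^{5/2}\,\frac{\|\EL{}\|_F}{\|P(\la)\|_F}\,\bigl(1+\|\la M_1+M_0\|_F+\|\la M_1+M_0\|_F^{2}\bigr)
\]
from \eqref{eq:C_pl}. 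Under the informal hypothesis this middle factor is $O(1)$, so the two ingredients left to control are the leading $d^{5/2}$ and the ratio $\|\EL{}\|_F/\|P(\la)\|_F$. The minimal-indices part of the conclusion is literally what Theorem~\ref{thm:perturbation} states, so it needs no further argument; likewise, for the case in which $\EL{}$ is a permuted Fiedler pencil one invokes Theorem~\ref{thm:perturbation2} in place of Theorem~\ref{thm:perturbation} and the analogous (but slightly simpler) simplification goes through.

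Next I would estimate $\|\EL{}\|_F/\|P(\la)\|_F$ via Lemma~\ref{lemm:quotient}(a). Using $\|P(\la)\|_F=1$ and $\|\la M_1+M_0\|_F=O(1)$, that lemma gives
\[
\frac{\|\EL{}\|_F}{\|P(\la)\|_F}
=\sqrt{\|\la M_1+M_0\|_F^{2}+2(n\e+m\eta)}
\;\lesssim\;\sqrt{1+2d(m+n)}
\;\lesssim\;\sqrt{d}\,\sqrt{m+n},
\]
because $\e\le d$ and $\eta\le d$. Substituting this bound into $C_{P,\mathcal{L}}$ and absorbing the constant $14$ and the factor $1+\|\la M_1+M_0\|_F+\|\la M_1+M_0\|_F^{2}=O(1)$ into the $\lesssim$ symbol yields
\[
C_{P,\mathcal{L}}\;\lesssim\; d^{5/2}\cdot\sqrt{d}\,\sqrt{m+n}\;=\;d^{3}\sqrt{m+n},
\]
which is precisely \eqref{eq:informalcor} once we multiply by $\|\Delta\EL{}\|_F/\|\EL{}\|_F$. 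Finally, the phrase ``$\|\Delta\EL{}\|_F$ sufficiently small'' is interpreted as ensuring that \eqref{eq:Lfinalbound} holds, which is automatic once one quantifies the implicit constants in ``$\approx$''; this is what legitimises the application of Theorem~\ref{thm:perturbation} in the first place.

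There is essentially no technical obstacle here: the corollary is a cosmetic repackaging of Theorem~\ref{thm:perturbation} under the two normalisation conditions $\|P(\la)\|_F=1$ and $\|\la M_1+M_0\|_F\approx 1$, combined with the elementary estimate of Lemma~\ref{lemm:quotient}(a). The only mildly delicate point is a presentational one, namely making the informal ``$\approx$'' and ``sufficiently small'' precise enough that the substitutions into \eqref{eq:C_pl} and \eqref{eq:Lfinalbound} are rigorous; once one fixes, for example, $c_1\le \|\la M_1+M_0\|_F\le c_2$ with absolute constants $c_1,c_2>0$, the whole argument becomes a one-line chain of inequalities. The assertion for the degenerate Fiedler-pencil case follows identically from Theorem~\ref{thm:perturbation2}, where one uses the stronger equality $\|\la M_1+M_0\|_F=\|P(\la)\|_F$ rather than an approximate inequality, giving an even cleaner estimate in which the $\|\la M_1+M_0\|_F^{2}$ term is absent.
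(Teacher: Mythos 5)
Your proposal is correct and follows exactly the route the paper intends: the corollary is presented there as an informal consequence of Theorem~\ref{thm:perturbation}, obtained by bounding the constant $C_{P,\mathcal{L}}$ in \eqref{eq:C_pl} via Lemma~\ref{lemm:quotient}(a) under the normalisations $\|P(\la)\|_F=1$ and $\|\la M_1+M_0\|_F\approx 1$, with $n\e+m\eta\le d(m+n)$ giving $\|\EL{}\|_F/\|P(\la)\|_F\lesssim\sqrt{d}\sqrt{m+n}$ and hence $d^{5/2}\cdot\sqrt{d}\sqrt{m+n}=d^3\sqrt{m+n}$. One small correction: the final assertion about permuted Fiedler pencils is \emph{not} handled by Theorem~\ref{thm:perturbation2}; generic permuted Fiedler pencils are non-degenerate block Kronecker pencils (both $\e>0$ and $\eta>0$), and the point is simply that for them $\|\la M_1+M_0\|_F=\|P(\la)\|_F$ holds exactly, so the hypothesis of the corollary is automatically satisfied and Theorem~\ref{thm:perturbation} applies directly. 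Theorem~\ref{thm:perturbation2} is reserved for the genuinely degenerate cases $\e=0$ or $\eta=0$ (e.g.\ the Frobenius companion forms), for which the paper notes the improved exponent $d^{3/2}$ and the smaller dimensional factor; this does not affect the validity of your main argument.
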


For degenerate block Kronecker pencils, the bound \eqref{eq:informalcor} can be improved as follows: the factor $d^3$ can be replaced by $d^{3/2}$, as a consequence of Theorem \ref{thm:perturbation2}, and $\sqrt{m+n}$ by $\sqrt{m}$ if $\e =0$ or by $\sqrt{n}$ if $\eta =0$, as a consequence of Lemma \ref{lemm:quotient}(a).

\begin{remark} {\rm We emphasize that Corollary \ref{cor:FINperturbation} can be applied also to non-permuted Fiedler pencils, since the Frobenius norm is invariant under permutations and permutations preserve strong linearizations and minimal indices. Therefore, given a Fiedler pencil and a perturbation of it, we can permute both and transform the corresponding perturbation problem into the problem we have solved in this section.
}
\end{remark}

\begin{remark} {\rm Consider that each block-entry of the $(1,1)$-block $\la M_1 + M_0$ of the block Kronecker pencil $\EL{}$ in
Theorems \ref{thm:perturbation} and \ref{thm:perturbation2}, and in Corollary \ref{cor:FINperturbation}, is a linear combination of the coefficients $P_d, \ldots , P_0$ of $P(\la)$ and of some arbitrary matrices. Then, the pencil $\EL{} + \Delta \EL{}$ in
Theorems \ref{thm:perturbation} and \ref{thm:perturbation2}, and in Corollary \ref{cor:FINperturbation}, is strictly equivalent to a block Kronecker pencil $\widehat{\mathcal{L}}(\la)$ {\em with exactly the same structure as $\EL{}$} but for the polynomial $P(\la) + \Delta P(\la)$ instead of $P(\la)$. This means that each block-entry of the $(1,1)$-block of the block Kronecker pencil $\widehat{\mathcal{L}}(\la)$ is the same linear combination of the coefficients $P_d +\Delta P_d, \ldots , P_0 + \Delta P_0$ of $P(\la) + \Delta P(\la)$ and of the same arbitrary matrices as the corresponding block entry of $\EL{}$ is for the coefficients $P_d, \ldots , P_0$ and the same arbitrary matrices.
In particular, if $\EL{}$ is a given permuted Fiedler pencil of $P(\la)$ (see \cite[Theorem 4.5]{Dopico:2016:BlockKronecker}), then $\EL{} + \Delta \EL{}$ is strictly equivalent to the same permuted Fiedler pencil of $P(\la) + \Delta P(\la)$. This result follows from the fact that Theorem \ref{thm:givenPblockKron} guarantees that $\widehat{\mathcal{L}}(\la)$ has the same complete eigenstructure as $\EL{} + \Delta \EL{}$, and so both pencils must be strictly equivalent \cite[Chapter XII]{gantmacher1960theory}. This remark by itself does not prove that the strict equivalence transformations connecting $\widehat{\mathcal{L}}(\la)$  and $\EL{} + \Delta \EL{}$ are small perturbations of identity matrices, despite the fact that $\widehat{\mathcal{L}}(\la)$ and $\EL{} + \Delta \EL{}$ are indeed very close each other. However, it is clear that this remark opens the possibility of proving directly that $\widehat{\mathcal{L}}(\la)$ and $\EL{} + \Delta \EL{}$ are strictly equivalent via nonsingular matrices that are very close to the identity, as it was done in \cite{van1983eigenstructure} for the Frobenius companion linearizations.
}
\end{remark}

\section{Conclusions and future work}\label{sec:conclusions} The new family of strong block minimal bases pencils has been introduced and analyzed.  We have proven in a simple and general way that these pencils are always strong linearizations of matrix polynomials and that their minimal indices and those of the polynomials satisfy constant uniform shifting relationships. These proofs are based on the properties of dual minimal bases---classical tools in multivariable linear system theory that have been used recently in different matrix polynomial eigenproblems. As an immediate corollary of this general theory, we obtain that the same results hold for the subfamily of block Kronecker pencils, which form a wide subclass of block minimal bases pencils easily constructible from the coefficients of a given but general matrix polynomial (general in the sense that it may be square or rectangular, regular or singular). The fundamental property that strong block minimal bases pencils are robust under arbitrary perturbations that are sufficiently small and that preserve the $(2,2)$-zero block allows us to develop a rigorous global backward error analysis of complete polynomial eigenproblems solved via block Kronecker pencils. The key point of the analysis is that although perturbations of block Kronecker pencils destroy the delicate block Kronecker structure, they lead, after some manipulations, to strong block minimal bases pencils with similar properties. The backward error bounds delivered by this analysis enjoy a number of novel features not present so far in the literature as, for instance, the fact that they are finite precise bounds instead of first order big-O bounds.

The results in this work have already motivated considerable research in the area. For instance, they have clarified many of the results that have been published in the last few years on linearizations of matrix polynomials, since it has been proved in \cite{budopereu-2016} that all generalized Fiedler linearizations \cite{p832,bueno-ter-dop-general,DeTeran:2011}, all Fiedler linearizations with repetition \cite{Bueno_structuredstrong,Bueno_palindromiclinearizations,greeks2011}, and all generalized Fiedler linearizations with repetition \cite{bueno2015large} may be transformed through proper permutations into particular strong block minimal bases pencils that can be described very easily; structured versions of the backward error analysis in this paper have been developed for many classes of structured strong block minimal bases linearizations of structured matrix polynomials in \cite{dopevdoor-2017}; in \cite{robol-vandebril-vandooren} particular block minimal bases linearizations have been used to compute efficiently and in a  stable way the zeros of a polynomial that is the sum of two polynomials expressed in two different bases, as well as for solving other challenging numerical problems; extensions of block Kronecker pencils that linearize matrix polynomials expressed in Chebyshev bases have been developed in
\cite{lawrence-perez-cheby}; it has been shown that each strong block minimal bases pencil can be used to construct strong linearizations of rational matrices with non-constant polynomial part \cite{amdomarza-2016}; etc. In addition to these publications, several other ongoing research projects related to block minimal bases pencils are being currently developed by different researchers. They include the extension of the error analysis to other strong block minimal bases linearizations and the generalization of the ideas presented in this work to the context of $\ell$-ifications of matrix polynomials \cite{de2014spectral,DDV-lifications}.

\appendix

\section{The minimal bases of strong block minimal bases pencils} \label{sec:appendixminbases} In this appendix, we state and prove Lemma \ref{lemm:techindminbaslin}, which establishes, first, the relationship between the vectors in the rational right null spaces of any of the strong block minimal bases pencils $\EL{}$ introduced in Definition \ref{def:minlinearizations} and of the corresponding matrix polynomial $Q(\la)$ in \eqref{eq:Qpolinminbaslin}, and, second, the relationship between the right minimal bases of $\EL{}$ and $Q(\la)$. In this paper Lemma \ref{lemm:techindminbaslin} is only used in the proof of Theorem \ref{thm:indicesminbaseslin}, but we emphasize that is very useful for proving the recovery procedures of eigenvectors and minimal bases of block Kronecker pencils in \cite[Section 7]{Dopico:2016:BlockKronecker} and that is a fundamental result in the theory of strong block minimal bases linearizations.

\begin{lemma} \label{lemm:techindminbaslin}
Let $\EL{}$ be a strong block minimal bases pencil as in \eqref{eq:minbaspencil}, let $N_1(\la)$ be a minimal basis dual to $K_1 (\la)$, let $N_2(\la)$ be a minimal basis dual to $K_2 (\la)$, let $Q(\la)$ be the matrix polynomial defined in \eqref{eq:Qpolinminbaslin}, and let $\widehat{N}_2 (\la)$ be the matrix appearing in \eqref{eq:twounimodembed}. Then the following hold:
\begin{enumerate}
\item[\rm (a)] If $h(\la) \in \mathcal{N}_r (Q)$, then
\begin{equation} \label{eq:defz}
z(\la) :=
\begin{bmatrix}
N_1 (\la)^T \\-\widehat{N}_2 (\la) M(\la) N_1 (\la)^T
\end{bmatrix} h(\la) \, \in \mathcal{N}_r (\mathcal{L})\, .
\end{equation}
Moreover, if $0 \ne h(\la) \in \mathcal{N}_r (Q)$ is a vector polynomial, then $z(\la)$ is also a vector polynomial and
\begin{equation} \label{eq:degreeshift1}
\deg (z (\la)) = \deg( N_1 (\la)^T \, h(\la)) = \deg( N_1 (\la)) +  \deg(h(\la)).
\end{equation}
\item[\rm (b)] If $\{h_1(\la), \ldots, h_p(\la)\}$ is a right minimal basis of $Q(\la)$, then
\[
\left\{\begin{bmatrix}
N_1 (\la)^T \\-\widehat{N}_2 (\la) M(\la) N_1 (\la)^T
\end{bmatrix} h_1(\la), \ldots , \begin{bmatrix}
N_1 (\la)^T \\-\widehat{N}_2 (\la) M(\la) N_1 (\la)^T
\end{bmatrix} h_p(\la) \right\}
\]
is a right minimal basis of $\EL{}$.
\end{enumerate}
\end{lemma}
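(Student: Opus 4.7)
The plan for part~(a) is to verify $\EL{}\,z = 0$ by a direct block matrix computation and then pin down the degree of $z$. Using the identities $K_1 N_1^T = 0$ (the dual-minimal-bases hypothesis) and $K_2^T \widehat{N}_2 + \widehat{K}_2^T N_2 = I$ (obtained by transposing $U_2^{-1} U_2 = I$ in \eqref{eq:twounimodembed}), a direct multiplication gives
\[
\EL{}\,z(\la) = \begin{bmatrix} \widehat{K}_2(\la)^T\, Q(\la)\, h(\la) \\ 0 \end{bmatrix},
\]
which vanishes whenever $h(\la) \in \mathcal{N}_r(Q)$. Write the two blocks of $z$ as $z_1(\la) := N_1(\la)^T h(\la)$ and $z_2(\la) := -\widehat{N}_2(\la)\, M(\la)\, N_1(\la)^T h(\la)$; then $z_2$ is a vector polynomial because $\widehat{N}_2$ is a block of the polynomial matrix $U_2(\la)^{-1}$. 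The hypothesis that $\EL{}$ is a \emph{strong} block minimal bases pencil forces the row degrees of $N_1$ to all equal $\ell_1 := \deg(N_1)$, so the leading row coefficient matrix $N_{1,\ell_1}$ has full row rank, and a no-cancellation argument immediately gives $\deg(z_1) = \ell_1 + \deg(h)$.

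The main obstacle is bounding $\deg(z_2)$, since the naive estimate $\deg(\widehat{N}_2) + 1 + \deg(z_1)$ typically exceeds $\deg(z_1)$. I would bypass this by using the relation $K_2(\la)^T z_2(\la) = -M(\la) z_1(\la)$ forced by $\EL{}\,z = 0$. Since $K_2$ is a minimal basis all of whose row degrees equal $1$, Theorem~\ref{thm:minimal_basis} yields that $K_2$ is row-reduced, so its highest row degree coefficient has full row rank; by transposition the coefficient of $\la$ in the pencil $K_2(\la)^T$ has full column rank. A second no-cancellation argument then gives $\deg(K_2^T z_2) = \deg(z_2) + 1$ whenever $z_2 \neq 0$. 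Combining this with $\deg(M z_1) \le 1 + \deg(z_1)$ produces $\deg(z_2) \le \deg(z_1)$, whence $\deg(z) = \deg(z_1) = \ell_1 + \deg(h)$, completing part~(a).

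For part~(b), let $H(\la)$ and $Z(\la)$ denote the matrices whose columns are the $h_i(\la)$ and $z_i(\la)$, respectively; note that the top block of $Z(\la)$ equals $N_1(\la)^T H(\la)$. Linear independence of $\{z_1,\ldots,z_p\}$ over $\FF(\la)$ follows from the independence of the $h_i$ together with the fact that $N_1(\la)^T$ has full column rank over $\FF(\la)$. Moreover, Theorem~\ref{thm:blockminlin} combined with Definition~\ref{def:linearization} yields $\dim \mathcal{N}_r(\EL{}) = \dim \mathcal{N}_r(Q) = p$, so $\{z_1,\ldots,z_p\}$ is already a basis of $\mathcal{N}_r(\EL{})$. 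Its minimality would then follow by applying Theorem~\ref{thm:minimal_basis} to $Z(\la)^T$: both the full-column-rank condition on $Z(\la_0)$ for every $\la_0 \in \overline{\FF}$ and the column-reducedness of $Z(\la)$ (with column degrees $\ell_1 + \e_i$) reduce to the corresponding properties of the top block $N_1^T H$. The former uses that $N_1(\la_0)^T$ and $H(\la_0)$ individually have full column rank, while the latter uses that the highest column degree coefficient matrix of $N_1^T H$ equals $N_{1,\ell_1}^T H_h$, a product of two matrices of full column rank.
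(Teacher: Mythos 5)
Your proposal is correct and follows essentially the same route as the paper's proof: the membership $z(\la)\in\mathcal{N}_r(\mathcal{L})$ via the unimodular completion of $K_2(\la)$ (the paper reads it off from the block anti-triangularization \eqref{eq:XYZminlin}, you compute $\EL{}z=\bigl[\widehat{K}_2^T Q h;\,0\bigr]$ directly, which is the same identity), the degree shift via $K_2(\la)^T z_2(\la)=-M(\la)z_1(\la)$ together with row-reducedness of $K_2$ and of $N_1$, and part (b) via the column version of Theorem \ref{thm:minimal_basis} plus the dimension count $\dim\mathcal{N}_r(\EL{})=\dim\mathcal{N}_r(Q)$. No gaps.
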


\begin{proof}
(a) It can be checked, via a direct multiplication, that the matrix $X(\la)$ in \eqref{eq:XYZminlin} satisfies $X(\la) = \widehat{N}_2 (\la) M(\la) N_1 (\la)^T$. Then, from \eqref{eq:XYZminlin}, we get that
\[
(U_2(\lambda)^{-T} \oplus I_{m_1}) \, \EL{} \, (U_1(\lambda)^{-1} \oplus I_{m_2})
\begin{bmatrix}
0 \\ I \\ -X(\la)
\end{bmatrix}
= \begin{bmatrix}
0 \\ Q(\la) \\ 0
\end{bmatrix},
\]
where the sizes of the identity and zero blocks are conformable with the partition of the last matrix in \eqref{eq:XYZminlin}. By using the structure of $U_1(\lambda)^{-1} \oplus I_{m_2}$ (recall \eqref{eq:twounimodembed}), the multiplication of the last two factors in the left-hand side of the previous equation leads to
\begin{equation} \label{eq:ins3forrecovery}
(U_2(\lambda)^{-T} \oplus I_{m_1}) \, \EL{} \,
\begin{bmatrix}
N_1(\la)^T \\ -X(\la)
\end{bmatrix}
= \begin{bmatrix}
0 \\ Q(\la) \\ 0
\end{bmatrix}.
\end{equation}
This equation implies that $z(\la) \in \mathcal{N}_r (\mathcal{L})$ if $h(\la) \in \mathcal{N}_r (Q)$, and also that $z(\la)$ is a vector polynomial if $h(\la)$ is, because $N_1(\la)$ and $X(\la)$ are matrix polynomials.

It only remains to prove the degree shift property \eqref{eq:degreeshift1} to conclude the proof of part (a). First, take into account that all the row degrees of the minimal basis $N_1 (\la)$ are equal and that its highest degree coefficient has full row rank. Therefore,
\begin{equation} \label{eq:auxdegreeshift1}
\deg( N_1 (\la)^T \, g(\la)) = \deg( N_1 (\la)) +  \deg(g(\la)) \, ,
\end{equation}
for any vector polynomial $g(\la) \ne 0$. The same argument applied to the minimal basis $K_2 (\la)$ proves that
\begin{equation} \label{eq:auxdegreeshift2}
\deg( K_2 (\la)^T \, y(\la)) = \deg( K_2 (\la)) +  \deg(y(\la)) = 1 +  \deg(y(\la)) \, ,
\end{equation}
for any vector polynomial $y(\la) \ne 0$. Next, observe that
\begin{equation} \label{eq:maxdegrees}
\deg (z(\la)) = \max \{\deg( N_1 (\la)^T h(\la)) \, , \, \deg (X(\la) h(\la))\} \, .
\end{equation}
Therefore \eqref{eq:degreeshift1} follows trivially if $X(\la) h(\la) = 0$. Finally, assume that $X(\la) h(\la) \ne 0$ and $h(\la) \in \mathcal{N}_r (Q)$. Then use $\EL {} z(\la) = 0$, and perform the multiplication corresponding to the first block of $\EL {} z(\la)$, using the expressions of $z(\la)$ in \eqref{eq:defz} and $\EL{}$ in \eqref{eq:minbaspencil}, to get
\[
M(\la) N_1 (\la)^T h(\la) = K_2(\la)^T X(\la) h(\la).
\]
This equality implies, together with \eqref{eq:auxdegreeshift2}, that
\begin{align*}
1 + \deg (X(\la) h(\la)) & = \deg (K_2(\la)^T X(\la) h(\la)) \leq \deg(M(\la)) + \deg( N_1 (\la)^T h(\la)) \\
& \leq 1 + \deg( N_1 (\la)^T h(\la)),
\end{align*}
and, so, $\deg (X(\la) h(\la)) \leq \deg( N_1 (\la)^T h(\la))$. This inequality, together with \eqref{eq:auxdegreeshift1} and \eqref{eq:maxdegrees} thus prove \eqref{eq:degreeshift1}.

\smallskip

(b) Let us consider the matrix product
\[ B(\la):=
\begin{bmatrix}
N_1 (\la)^T \\-\widehat{N}_2 (\la) M(\la) N_1 (\la)^T
\end{bmatrix} [h_1(\la) \cdots h_p (\la)],
\]
and let us prove that their columns are a minimal basis of the rational subspace they span by applying a version of Theorem \ref{thm:minimal_basis} for columns. Note that for all $\lambda_0 \in \overline{\FF}$, $B(\la_0)$ has full column rank since $N_1 (\la_0)^T$ and $[h_1(\la_0) \cdots h_p (\la_0)]$ have both full column rank, since the columns of $N_1 (\la)^T $ and $[h_1(\la) \cdots h_p (\la)]$ are minimal bases. Next, observe that \eqref{eq:degreeshift1} implies that the highest column degree coefficient matrix $B_{hc}$ of $B(\la)$ has as a submatrix the highest column degree coefficient matrix $C_{hc}$ of $C(\la) := N_1 (\la)^T [h_1(\la) \cdots h_p (\la)]$. Since the column degrees of $N_1 (\la)^T$ are all equal, we have that $C_{hc}$ is the product of the highest column degree coefficient matrices of $N_1 (\la)^T$ and  $[h_1(\la) \cdots h_p (\la)]$, which have both full column rank because the columns of both matrices are minimal bases. So $C_{hc}$ has full column rank, as well as $B_{hc}$. This implies that the columns of $B(\la)$ are a minimal basis of a rational subspace $\mathcal{S}$. In addition, $\mathcal{S} \subseteq \mathcal{N}_r (\EL{})$ by part (a). Finally, note that $\mathcal{S} = \mathcal{N}_r (\mathcal{L})$ because $\dim(\mathcal{N}_r (Q)) = \dim (\mathcal{N}_r (\mathcal{L}))$, since $\EL{}$ is a strong linearization of $Q(\la)$ by Theorem \ref{thm:blockminlin}(b) and, then, Theorem 4.1 in \cite{de2014spectral} holds.
\end{proof}

\section{Proof of Lemma \ref{lemma_min_singular_value}} \label{sec:proof-sigmamin} In this appendix, we assume that $\e \ne 0$ and $\eta \ne 0$ according to Remark \ref{rem:2empty}.
We first reduce in Lemma \ref{lemm:appT1} the problem of computing $\sigma_{\min}(T)$  to the problem of computing the minimum singular value of a matrix of size $2 \e \eta \times (2 \e \eta + \e + \eta)$, which is much smaller than the size of $T$.

\begin{lemma} \label{lemm:appT1} Let $T$ be the matrix defined in \eqref{eq:linear_operator_equation} and
\begin{equation} \label{eq:defThat}
    \widehat{T} :=
    \left[
      \begin{array}{c|c}
        I_{\e}\otimes E_{\eta} &E_{\e}\otimes I_{\eta}\\\hline
        I_{\e}\otimes F_{\eta}&F_{\e}\otimes I_{\eta}
      \end{array}
    \right]\>,
  \end{equation}
where $\la F_k - E_k := L_k(\la)$ is the pencil in \eqref{eq:Lk}.
Then $\sigma_{\min} (T) = \sigma_{\min} (\widehat{T})$.
\end{lemma}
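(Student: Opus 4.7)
The plan is to show that $T$ is permutation equivalent to $\widehat T \otimes I_{mn}$, from which the result follows by two standard facts: row and column permutations preserve singular values, and $\sigma_{\min}(X \otimes I_k) = \sigma_{\min}(X)$ for any matrix $X$ (because the singular values of $X \otimes I_k$ are exactly those of $X$, each repeated $k$ times).

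To expose the permutation equivalence, I would first rewrite each block of $T$ as a four-fold Kronecker product, using $E_{\eta m} = E_\eta \otimes I_m$, $F_{\eta m} = F_\eta \otimes I_m$, $I_{\eta m} = I_\eta \otimes I_m$ and the analogous identities in $\e, n$. This yields
\[
T_{11} = E_\eta \otimes I_m \otimes I_\e \otimes I_n, \quad T_{12} = I_\eta \otimes I_m \otimes E_\e \otimes I_n,
\]
and similar expressions for $T_{21}, T_{22}$ with $F$ replacing $E$. On the other hand, each block of $\widehat T \otimes I_{mn}$ has the form $(I_\e \otimes E_\eta) \otimes I_m \otimes I_n$, $(E_\e \otimes I_\eta) \otimes I_m \otimes I_n$, etc., so the task is to reorder the four Kronecker factors in each $T_{ij}$ from the pattern $(\text{factor}_\eta, I_m, \text{factor}_\e, I_n)$ to the pattern $(\text{factor}_\e, \text{factor}_\eta, I_m, I_n)$. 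This can be accomplished by composing two perfect-shuffle permutations (transpositions of adjacent Kronecker factors), since any two adjacent Kronecker factors $A \otimes B$ can be rewritten as $B \otimes A$ after appropriate row and column permutations.

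The delicate point, and the main obstacle, is to verify that these perfect-shuffle permutations can be chosen \emph{consistently} so that a single row permutation $P$ and a single column permutation $Q$ realize the equivalence $PTQ = \widehat T \otimes I_{mn}$ for all four blocks at once. This works because the row index sets of all four blocks have the same tensor structure $[\eta] \times [m] \times [\e] \times [n]$, so one common row permutation reindexes them uniformly to $[\e] \times [\eta] \times [m] \times [n]$. Likewise, the two blocks in the left block column ($T_{11}$ and $T_{21}$) share the column structure $[\eta+1] \times [m] \times [\e] \times [n]$, and the two in the right block column ($T_{12}$ and $T_{22}$) share $[\eta] \times [m] \times [\e+1] \times [n]$; taking $Q = Q_1 \oplus Q_2$ with $Q_1, Q_2$ the corresponding perfect shuffles delivers the required block-column column permutation without mixing the two block columns.

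Once $PTQ = \widehat T \otimes I_{mn}$ is established, the conclusion is immediate: $\sigma_{\min}(T) = \sigma_{\min}(PTQ) = \sigma_{\min}(\widehat T \otimes I_{mn}) = \sigma_{\min}(\widehat T) \cdot \sigma_{\min}(I_{mn}) = \sigma_{\min}(\widehat T)$, using the fact that the singular values of a Kronecker product are the products of the singular values of its factors.
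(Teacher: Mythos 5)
Your proposal is correct and follows essentially the same route as the paper: both rewrite the blocks of $T$ as iterated Kronecker products, apply perfect-shuffle permutations (block-diagonal with respect to the $2\times 2$ block structure, which is exactly the consistency point you flag), and conclude via the multiplicativity of singular values under Kronecker products. The only cosmetic difference is that the paper first peels off the factor $I_n$ without any permutation (obtaining $T=\widetilde{T}\otimes I_n$) and then shuffles to reach $\widehat{T}\otimes I_m$, whereas you perform a single combined shuffle to reach $\widehat{T}\otimes I_{mn}$ directly.
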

\begin{proof}
Since the Kronecker product is associative \cite[Chapter 4]{Horn}, we may write the matrix $T$ as
  \begin{align}\label{eq:T_reiter}
  \begin{split}
    T=&
    \left[
      \begin{array}{c|c}
        E_{\eta}\otimes I_{\rowdim}\otimes I_{\e}\otimes I_{\coldim} &I_{\eta}\otimes I_{\rowdim}\otimes E_{\e}\otimes I_{\coldim}\\\hline
        F_{\eta}\otimes I_{\rowdim}\otimes I_{\e}\otimes I_{\coldim}&I_{\eta}\otimes I_{\rowdim}\otimes F_{\e}\otimes I_{\coldim}
      \end{array}
    \right]\\
     =&
    \left[
      \begin{array}{c|c}
        (E_{\eta}\otimes I_{\rowdim})\otimes I_{\e}&I_{\eta\rowdim}\otimes E_{\e}\\\hline
        (F_{\eta}\otimes I_{\rowdim})\otimes I_{\e}&I_{\eta\rowdim}\otimes F_{\e}
      \end{array}
    \right]\otimes I_{\coldim}=: \widetilde{T}\otimes I_{\coldim}.
    \end{split}
  \end{align}
Thus, $\sigma_{\min}(T) = \sigma_{\min} (\widetilde{T})$ by \cite[Theorem 4.2.15]{Horn}. Let us perform a perfect shuffle on the matrix $\widetilde{T}$ on the right of \eqref{eq:T_reiter} to swap the order of the Kronecker products of its blocks.
Following Van Loan \cite{VanLoan2000}, there exist permutation matrices $S$, $R_1^T$ and $R_2^T$ of sizes $\e \eta m\times \e \eta m$, $\e(\eta+1)m\times \e (\eta +1) m$ and $(\e +1)\eta m\times (\e +1)\eta m$, respectively, such that
  \begin{align*}
    &\left[
      \begin{array}{c|c}
        S&\\\hline
        &S
      \end{array}
    \right]
    \left[
      \begin{array}{c|c}
        (E_{\eta}\otimes I_{\rowdim})\otimes I_{\e}&I_{\eta \rowdim}\otimes E_{\e}\\\hline
        (F_{\eta}\otimes I_{\rowdim})\otimes I_{\e}&I_{\eta \rowdim}\otimes F_{\e}
      \end{array}
    \right]
    \left[
      \begin{array}{c|c}
        R_1^{T}&\\\hline
        &R_2^{T}
      \end{array}
    \right]\\
    =&
    \left[
      \begin{array}{c|c}
        I_{\e}\otimes (E_{\eta}\otimes I_{\rowdim})&E_{\e}\otimes I_{\eta \rowdim}\\\hline
         I_{\e}\otimes (F_{\eta}\otimes I_{\rowdim})&F_{\e}\otimes I_{\eta \rowdim}
      \end{array}
    \right]
    =
    \left[
      \begin{array}{c|c}
        I_{\e}\otimes E_{\eta} &E_{\e}\otimes I_{\eta}\\\hline
         I_{\e}\otimes F_{\eta}&F_{\e}\otimes I_{\eta}
      \end{array}
    \right]\otimes I_{\rowdim}=\widehat{T}\otimes I_{\rowdim}.
  \end{align*}
Using again \cite[Theorem 4.2.15]{Horn}, we get $\sigma_{\min} (T)= \sigma_{\min} (\widetilde{T}) = \sigma_{\min} (\widehat{T})$.
\end{proof}

Lemma \ref{lemm:appT2} reduces the problem of computing the minimum singular value of $\widehat{T}$ in \eqref{eq:defThat} to compute the largest singular value of a matrix smaller than $\widehat{T}$, essentially with half its size, and with a simpler structure.
\begin{lemma} \label{lemm:appT2} Let $\widehat{T}$ be the matrix in \eqref{eq:defThat}. Then
\begin{equation}
\label{eq:sv}
\sigma_{\rm min}(\widehat{T}) = \sqrt{2-\sigma_{\rm max}(W_{\e,\eta}}) \, ,
\end{equation}
where $W_{\e,\eta} = I_\e \otimes E_\eta F_\eta^T+E_\e F_\e^T\otimes I_\eta \in \mathbb{R}^{\e \eta \times \e \eta}$ and $\sigma_{\rm max}(W_{\e,\eta})$ denotes its maximum singular value.
\end{lemma}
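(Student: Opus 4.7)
The plan is to reduce the computation of $\sigma_{\min}(\widehat{T})$ to an eigenvalue problem for the relatively small matrix $\widehat{T}\widehat{T}^T$ of size $2\e\eta \times 2\e\eta$. Since $\widehat{T}$ has $2\e\eta$ rows and $2\e\eta+\e+\eta$ columns, we have $\sigma_{\min}(\widehat{T})^2 = \lambda_{\min}(\widehat{T}\widehat{T}^T)$ whenever $\widehat{T}\widehat{T}^T$ is positive semidefinite, which it automatically is.

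First I would compute $\widehat{T}\widehat{T}^T$ block by block using the mixed-product property $(A\otimes B)(C\otimes D) = AC \otimes BD$ together with the trivial identities $E_kE_k^T = F_kF_k^T = I_k$, which follow directly from $E_k = [I_k,\,0]$ and $F_k = [0,\,I_k]$. The two diagonal blocks each reduce to $I_\e \otimes (E_\eta E_\eta^T) + (E_\e E_\e^T)\otimes I_\eta = 2I_{\e\eta}$ (and similarly with $F$ in place of $E$ for the bottom-right), while the off-diagonal blocks become $I_\e \otimes (E_\eta F_\eta^T) + (E_\e F_\e^T)\otimes I_\eta = W_{\e,\eta}$ and its transpose. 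Therefore
\begin{equation*}
\widehat{T}\widehat{T}^T = \begin{bmatrix} 2I_{\e\eta} & W_{\e,\eta} \\ W_{\e,\eta}^T & 2I_{\e\eta}\end{bmatrix} = 2 I_{2\e\eta} + \begin{bmatrix} 0 & W_{\e,\eta} \\ W_{\e,\eta}^T & 0\end{bmatrix}.
\end{equation*}

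Next I would invoke the standard fact that the eigenvalues of a symmetric block antidiagonal matrix of the form $\bigl[\begin{smallmatrix} 0 & W \\ W^T & 0\end{smallmatrix}\bigr]$ are exactly the signed singular values $\pm \sigma_i(W)$. Consequently, the eigenvalues of $\widehat{T}\widehat{T}^T$ are $2 \pm \sigma_i(W_{\e,\eta})$, so its smallest eigenvalue is $2 - \sigma_{\max}(W_{\e,\eta})$. Taking square roots yields \eqref{eq:sv}; nonnegativity of the radicand is forced by positive semidefiniteness of $\widehat{T}\widehat{T}^T$, so no separate argument is needed to justify the square root.

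There is no real obstacle in this step: everything reduces to elementary Kronecker algebra and a textbook observation about block antidiagonal matrices. The substantive work — the closed-form evaluation of $\sigma_{\max}(W_{\e,\eta})$ that converts \eqref{eq:sv} into the trigonometric expression \eqref{eq:smallest_singvalue} — lies beyond this lemma and will presumably occupy the remainder of the appendix, likely via an eigenvector ansatz involving sines/cosines on the $\e\times\eta$ grid.
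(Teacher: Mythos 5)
Your proposal is correct and follows essentially the same route as the paper: both compute $\widehat{T}\widehat{T}^T = 2I_{2\e\eta} + \bigl[\begin{smallmatrix} 0 & W_{\e,\eta} \\ W_{\e,\eta}^T & 0\end{smallmatrix}\bigr]$ and invoke the standard fact that the eigenvalues of the symmetric block antidiagonal part are $\pm\sigma_i(W_{\e,\eta})$, with positive semidefiniteness of $\widehat{T}\widehat{T}^T$ justifying the square root. No gaps.
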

\begin{proof}
The singular values of $\widehat{T}$ are the square roots of the eigenvalues of
\[
\widehat{T} \widehat{T}^T =
\begin{bmatrix}
2I_{\e \eta} & W_{\e ,\eta} \\
W_{\e , \eta}^T & 2I_{\e \eta}
\end{bmatrix} =
2 \, I_{2 \e \eta}
+
\begin{bmatrix}
0 & W_{\e ,\eta} \\
W_{\e , \eta}^T & 0
\end{bmatrix},
\]
where $W_{\e,\eta} = I_\e \otimes E_\eta F_\eta^T+E_\e F_\e^T\otimes I_\eta$. It is well known (see, for instance, \cite[Theorem I.4.2]{stewartsunbook}) that the eigenvalues of $[0 \, , \, W_{\e ,\eta} \, ; \, W_{\e , \eta}^T \, , \, 0]$ are $\pm \sigma_1 (W_{\e,\eta}), \ldots, \pm \sigma_{\e \eta} (W_{\e,\eta})$, where   $\sigma_1 (W_{\e,\eta}) \geq \cdots \geq \sigma_{\e \eta} (W_{\e,\eta})$ are the singular values of $W_{\e,\eta}$. Therefore, the eigenvalues of $\widehat{T} \widehat{T}^T$ are $2 \pm \sigma_1 (W_{\e,\eta}), \ldots, 2 \pm \sigma_{\e \eta} (W_{\e,\eta})$, which implies the result. Observe that $\widehat{T} \widehat{T}^T$ is positive semidefinite and, thus, its eigenvalues are nonnegative.
\end{proof}

The advantage of the matrix $W_{\e,\eta}$ is that has a bidiagonal block Toeplitz structure with very simple blocks. This comes from the fact that
\[
E_k F_k^T = \begin{bmatrix}
0 \\
1 & 0 \\
& 1 & 0 \\
&& \ddots & \ddots \\
&& & 1 & 0
\end{bmatrix} =: J_k \in\mathbb{R}^{k \times k} \quad \mbox{(with $J_1 := 0_{1\times 1}$)},
\]
which implies
\begin{equation} \label{eq:Weeta}
W_{\e,\eta} = I_\e \otimes E_\eta F_\eta^T+E_\e F_\e^T\otimes I_\eta =
\underbrace{
\begin{bmatrix}
J_\eta \\
I_\eta & J_\eta \\
& I_\eta & J_\eta \\
& & \ddots & \ddots \\
& & & I_\eta & J_\eta
\end{bmatrix}}_{\displaystyle \e \; \mbox{block columns}}
\left. \phantom{\begin{array}{l} l\\l\\l\\l\\l \end{array}} \! \! \!\!\! \!\!\!\right\}
\e \mbox{ block rows} \,.
\end{equation}
This structure will allow us to compute explicitly the largest singular value of $W_{\e,\eta}$.
Without loss of generality, we assume that $\e \geq \eta$, since, otherwise, $W_{\e,\eta}$ is transformed  into $W_{\eta ,\e}$ with a perfect shuffle permutation , i.e., by interchanging the order of the Kronecker products in the summands of $W_{\e ,\eta}$.
In this situation, note that if $\eta =1$, then $W_{1,1} = 0_{1\times 1}$ and $W_{\e,1} = J_\e$ for $\e > \eta =1$. Therefore,
\begin{equation} \label{eq:eta=1}
\sigma_{\max}( W_{1,1})  = 0 \quad \mbox{and} \quad \sigma_{\max}( W_{\e,1})  = 1, \quad \mbox{if $\e > \eta = 1$}.
\end{equation}
If $\eta > 1$, then $\sigma_{\max}( W_{\e,\eta})$ can be computed with the help of  Lemma \ref{lemma:direct_sum}, where we show that $W_{\e,\eta}$ is permutationally equivalent to a direct sum involving the following two types of matrices
\begin{equation} \label{eq:GkMk}
M_k :=
\begin{bmatrix}
1 & 1 \\ & 1 & 1 \\ & & \ddots & \ddots \\ & & & 1 & 1 \\ & & & & 1
\end{bmatrix}\in\mathbb{R}^{k\times k}\quad \mbox{and}\quad
G_k :=
\begin{bmatrix}
1 \\
1 & 1 \\
& \ddots & \ddots \\
& & 1 & 1\\
& & & 1
\end{bmatrix}\in\mathbb{R}^{(k+1)\times k}.
\end{equation}

\begin{lemma}\label{lemma:direct_sum} Let $W_{\e , \eta}$ be the matrix in \eqref{eq:Weeta}, let $M_k$ and $G_k$ be the matrices in \eqref{eq:GkMk}, and assume that $\e \geq \eta$.
Then, there exist two permutation matrices $P_1$ and $P_2$ such that
\begin{equation}
\label{eq:direct_sum}
P_1W_{\e,\eta}P_2 =
\underbrace{(M_\eta\oplus M_\eta \oplus \cdots \oplus M_\eta)}_{\e-\eta\mbox{ times}}\oplus (G_{\eta-1}\oplus G_{\eta-1}^T)\oplus \cdots \oplus (G_1\oplus G_1^T) \oplus 0_{1\times 1}.
\end{equation}
\end{lemma}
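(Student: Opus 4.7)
The plan is to identify $W_{\e,\eta}$ with the biadjacency matrix of a bipartite graph on an $\e\times\eta$ grid and to read off the block decomposition from its connected components.

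First, using $E_k F_k^T = J_k$ and the Kronecker structure in \eqref{eq:Weeta}, I will index both rows and columns of $W_{\e,\eta}$ by pairs $(i,j)\in\{1,\ldots,\e\}\times\{1,\ldots,\eta\}$, so that $(W_{\e,\eta})_{(i',j'),(i,j)}=1$ precisely when $(i',j')\in\{(i+1,j),(i,j+1)\}$. Block-diagonalising $W_{\e,\eta}$ by permutations $P_1$ (rows) and $P_2$ (columns) then amounts to partitioning its row- and column-indices into the connected components of the bipartite graph $H$ whose edges are given by these nonzero entries.

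Second, I will enumerate those components. Every edge of $H$ satisfies $i'+j'=(i+j)+1$, so the value $i+j$ is constant on the column-vertices of each component. Conversely, within the anti-diagonal $D_s^C=\{(i,j):i+j=s\}$, consecutive vertices $(i,s-i)$ and $(i+1,s-i-1)$ share the common row-neighbour $(i+1,s-i)$ whenever the latter lies in the grid; a brief verification of the boundary range $\max(1,s-\eta)\le i\le\min(\e,s-1)$ shows this chains all of $D_s^C$ into one component, and the same argument works on the row side. Hence for each $s=2,\ldots,\e+\eta-1$ there is a single nontrivial component $\Gamma_s$ with column-part $D_s^C$ and row-part $D_{s+1}^R$, plus two isolated vertices (the row $(1,1)$ and the column $(\e,\eta)$) which together yield a $1\times1$ zero block.

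Third, I will identify each block by listing the elements of $D_s^C$ and $D_{s+1}^R$ in increasing $i$-coordinate. Using $|D_s|=\min(s-1,\eta,\e+\eta+1-s)$, three regimes appear: for the $\e-\eta$ values $s\in\{\eta+1,\ldots,\e\}$ both parts have size $\eta$ and a direct incidence count gives $M_\eta$; for $s\in\{2,\ldots,\eta\}$ the block has size $s\times(s-1)$ and equals $G_{s-1}$; and for $s\in\{\e+1,\ldots,\e+\eta-1\}$ the block has size $(\e+\eta-s)\times(\e+\eta-s+1)$ and equals $G_{\e+\eta-s}^T$. Listing the components in the order dictated by the lemma then produces the direct sum in \eqref{eq:direct_sum}.

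The main obstacle is purely combinatorial bookkeeping: one must fix a labelling convention on each anti-diagonal so that the resulting block acquires exactly the bidiagonal pattern in \eqref{eq:GkMk} rather than some reindexed variant, with extra care needed for the short boundary anti-diagonals $s\le\eta$ and $s\ge\e+1$, where the constraint on $i$ changes which endpoints of the chain have only one neighbour instead of two.
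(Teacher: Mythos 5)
Your argument is correct, and it takes a genuinely different route from the paper's. The paper proceeds by explicitly locating the submatrices $G_1,G_1^T,\ldots,G_{\eta-1},G_{\eta-1}^T$ inside $W_{\e,\eta}$ through a column-by-column/row-by-row extraction (using a bespoke indexing of block rows and columns), establishes the case $\e=\eta$ by counting sizes and nonzero entries, and then runs an induction on $\e$ in which $W_{\e,\eta}$ is viewed as a submatrix of $W_{\e+1,\eta}$ and the extra block column is shown to upgrade each $G_{k}$ to $G_{k+1}$ and finally $G_{\eta-1}$ to $M_\eta$. Your bipartite-graph formulation replaces all of this with a single structural observation: since $(W_{\e,\eta})_{(i',j'),(i,j)}=1$ exactly when $(i',j')\in\{(i+1,j),(i,j+1)\}$, the quantity $i+j$ increases by exactly $1$ along every edge, so the connected components are indexed by anti-diagonals, and the chaining of consecutive vertices on an anti-diagonal through a shared neighbour makes each component exactly $D_s^C\cup D_{s+1}^R$ (with the two isolated vertices $(1,1)$ and $(\e,\eta)$ supplying the $0_{1\times 1}$ block). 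The case analysis on $|D_s|$ then yields the $\e-\eta$ copies of $M_\eta$ and the pairs $G_{s-1}$, $G_{\e+\eta-s}^T$, with the increasing-$i$ ordering on each anti-diagonal producing precisely the bidiagonal patterns of \eqref{eq:GkMk}; I have checked the boundary ranges and the incidence counts in all three regimes and they come out as you claim. What your approach buys is uniformity: it handles $\e=\eta$ and $\e>\eta$ simultaneously, needs no induction, and makes the completeness of the decomposition (that nothing is left over) automatic from the fact that connected components partition the vertex set, rather than from the size-and-entry count the paper falls back on. What it costs is only the bookkeeping you already flag, namely fixing the labelling on each anti-diagonal so the blocks appear literally as $M_\eta$, $G_k$, $G_k^T$ rather than permuted variants; since any relabelling is absorbed into $P_1$ and $P_2$ anyway, this is harmless.
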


\begin{proof} If $\eta =1$, then the result follows trivially from the discussion in the two lines above \eqref{eq:eta=1} with the convention $G_0 \oplus G_0^T := 0_{1 \times 1}$. Therefore, we assume in the rest of the proof that $\eta > 1$.
Observe that the $0_{1\times 1}$ block is a consequence of the fact that the first row and the last column of $W_{\e,\eta}$ are both zero. Thus, permuting the first row to the last row position produces the $0_{1\times 1}$ block. A complete formal proof is rather technical, but the key ideas are easy to follow. Therefore, we restrict ourselves to describe such ideas. In order to do this in a concise way we use in this proof the following notation: the column $2^{(3)}$ of $W_{\e , \eta}$ stands for the $2$nd column in the $3$rd block column of  $W_{\e , \eta}$. An analogous notation is used for rows and both notations are combined with the standard MATLAB's notation for submatrices.

Observe that $G_1$ is the submatrix of nonzero rows of $W_{\e,\eta} (:, 1^{(1)})$, which correspond to rows of $W_{\e,\eta}$ with the remaining entries equal to zero and, thus, this submatrix can be transformed via permutations into an explicit direct summand. $G_2$ is the submatrix of nonzero rows of $W_{\e,\eta} (:, [2^{(1)},1^{(2)}])$, which correspond to rows of $W_{\e,\eta}$ with the remaining entries equal to zero. $G_3$ is the submatrix of nonzero rows of $W_{\e,\eta} (:, [3^{(1)},2^{(2)},1^{(3)}])$, which correspond to rows of $W_{\e,\eta}$ with the remaining entries equal to zero. This process continues until we find that $G_{\eta -1}$ is the submatrix of nonzero rows of $W_{\e,\eta} (:, [(\eta-1)^{(1)},(\eta-2)^{(2)}, \ldots, 1^{(\eta-1)}])$, which correspond to rows of $W_{\e,\eta}$ with the remaining entries equal to zero. Note that we have started each of the previous submatrices with the $1$st, $2$nd, ..., $(\eta-1)$th columns of the first block column of $W_{\e , \eta}$. Since $W_{\e , \eta}$ is symmetric with respect to the main antidiagonal, $G_1^T, G_2^T, \ldots , G_{\eta -1}^T$ are obtained starting from the bottom with the $\eta$th, $(\eta-1)$th , ..., $2$nd rows of the last block row of $W_{\e , \eta}$. More precisely, $G_1^T$ comes from $W_{\e,\eta} (\eta^{(\e)},:)$, $G_2^T$ comes from $W_{\e,\eta} ([(\eta^{(\e-1)},(\eta-1)^{(\e)}],:)$, and so on until one gets $G_{\eta -1}^T$, which comes from $W_{\e,\eta} ([\eta^{(\e-\eta+2)} , \ldots , 3^{(\e-1)},2^{(\e)}],:)$. In this way, the direct summands $0_{1\times 1},G_1,G_1^T, G_2,G_2^T, \ldots , G_{\eta -1} , G_{\eta -1}^T$ have been identified for any $\e \geq \eta$. This leads directly to the proof in the case $\e = \eta$, because in this case the size and the number of entries equal to $1$ of $(G_{\eta-1}\oplus G_{\eta-1}^T)\oplus \cdots \oplus (G_1\oplus G_1^T) \oplus 0_{1\times 1}$ are equal to those of $W_{\eta,\eta}$.

Next, we prove the case $\e = \eta + 1$. The proof we propose is based on the fact that $W_{\eta,\eta}$ is the submatrix of $W_{\eta+1,\eta}$ lying in its $2,\ldots, \eta+1$ block rows and columns. This implies that the submatrices of $W_{\eta,\eta}$ are submatrices of $W_{\eta+1,\eta}$. Observe that the top rows of the submatrices of $W_{\eta,\eta}$ corresponding to $G_1, G_2, \ldots , G_{\eta-1}$, that is, the submatrices formed by the nonzero rows of $W_{\eta,\eta} (:, 1^{(1)}), W_{\eta,\eta} (:, [2^{(1)},1^{(2)}]), \ldots , W_{\eta,\eta} (:, [(\eta-1)^{(1)},(\eta-2)^{(2)}, \ldots, 1^{(\eta-1)}])$, when viewed as submatrices of $W_{\eta+1,\eta}$ correspond to rows of $W_{\eta+1,\eta}$ that have nonzero entries to the left of such submatrices and, thus, these submatrices cannot be transformed via permutations into block summands. In fact, $W_{\eta+1,\eta}(:,2^{(1)})$ combined with the $G_1$ of $W_{\eta,\eta}$ (not permuted) leads to the $G_2$ in $W_{\eta+1,\eta}$, $W_{\eta+1,\eta}(:,3^{(1)})$ combined with the $G_2$ of $W_{\eta,\eta}$ leads to the $G_3$ in $W_{\eta+1,\eta}$, and so on until we get that $W_{\eta+1,\eta}(:,(\eta-1)^{(1)})$ combined with the $G_{\eta-2}$ of $W_{\eta,\eta}$ leads to the $G_{\eta-1}$ in $W_{\eta+1,\eta}$. The column $W_{\eta+1,\eta}(:,\eta^{(1)})$ is different than the previous ones of $W_{\eta+1,\eta}$, since it has exactly one entry equal to $1$, while the previous ones have two entries equal to $1$. Therefore, $W_{\eta+1,\eta}(:,\eta^{(1)})$ combined with the $G_{\eta-1}$ of $W_{\eta,\eta}$ leads to an $M_{\eta}$ block in $W_{\eta+1,\eta}$.
The missing $G_1$ direct summand of $W_{\eta+1,\eta}$ comes from the nonzero rows of $W_{\eta+1,\eta}(:,1^{(1)})$. To summarize, we have identified the direct sum $M_\eta \oplus (G_{\eta-1}\oplus G_{\eta-1}^T)\oplus \cdots \oplus (G_1\oplus G_1^T) \oplus 0_{1\times 1}$ inside $W_{\eta+1,\eta}$. The proof is completed by noting that the sizes and the numbers of $1$s of these two matrices are equal.

The last part of the proof is an induction argument that follows exactly the steps explained in the previous paragraph. Let us assume that the result is true for any $W_{\e,\eta}$ with $\e > \eta$ and let us prove it for $W_{\e + 1,\eta}$.
As in the previous paragraph $W_{\e , \eta}$ is viewed as the submatrix of $W_{\e + 1, \eta}$ lying in its $2, \ldots , \e+1$ block rows and columns. Also
as in the previous paragraph, $W_{\e+1,\eta}(:,2^{(1)})$ combined with the $G_1$ of $W_{\e,\eta}$ leads to the $G_2$ in $W_{\e+1,\eta}$, $W_{\e+1,\eta}(:,3^{(1)})$ combined with the $G_2$ of $W_{\e,\eta}$ leads to the $G_3$ in $W_{\e+1,\eta}$, ... ,$W_{\e +1,\eta}(:,(\eta-1)^{(1)})$ combined with the $G_{\eta-2}$ of $W_{\e,\eta}$ leads to the $G_{\eta-1}$ in $W_{\e+1,\eta}$, and $W_{\e+1,\eta}(:,\eta^{(1)})$ combined with the $G_{\eta-1}$ of $W_{\e,\eta}$ leads to an $M_{\eta}$ block in $W_{\e+1,\eta}$. The $G_1$ direct summand of $W_{\e+1,\eta}$ comes from the nonzero rows of $W_{\e+1,\eta}(:,1^{(1)})$. The rest of submatrices of $W_{\e,\eta}$ producing direct summands lie in its $2,...,\e$ block rows, i.e., in the $3,\ldots , \e+1$ block rows and $2, \ldots, \e+1$ block columns of $W_{\e+1,\eta}$, and, thus, do not interact with other nonzero entries of $W_{\e+1,\eta}$, which implies that they remain as direct summands of $W_{\e+1,\eta}$, To summarize, we have identified the direct sum
\[
\underbrace{(M_\eta\oplus M_\eta \oplus \cdots \oplus M_\eta)}_{\e+1-\eta\mbox{ times}}\oplus (G_{\eta-1}\oplus G_{\eta-1}^T)\oplus \cdots \oplus (G_1\oplus G_1^T) \oplus 0_{1\times 1}
\]
inside $W_{\e+1,\eta}$. The proof concludes by noting that the sizes and the numbers of $1$s of this direct sum and $W_{\e+1,\eta}$ are equal.
\end{proof}

Now, we are in the position of computing $\sigma_{\max}(W_{\e,\eta})$.

\begin{proposition}\label{prop:normW}
Let $W_{\e,\eta}$ be the matrix in \eqref{eq:Weeta}. Then
\begin{equation}\label{eq:normW}
\sigma_{\max}(W_{\e,\eta}) = \left\{
\begin{array}{ll}
2\cos  \frac{\pi}{2\min\{\e,\eta\}+1},  & \mbox{ if }\e\neq \eta,  \\
2\cos \frac{\pi}{2\eta},  & \mbox{ if }\e = \eta .
\end{array}
\right.
\end{equation}
\end{proposition}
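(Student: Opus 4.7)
My approach would be to exploit the direct sum decomposition in Lemma \ref{lemma:direct_sum}: since singular values are invariant under pre- and post-multiplication by permutation matrices, $\sigma_{\max}(W_{\e,\eta})$ equals the maximum among the largest singular values of the summands on the right-hand side of \eqref{eq:direct_sum}. The $0_{1\times 1}$ block contributes $0$, and $\sigma_{\max}(G_k)=\sigma_{\max}(G_k^T)$, so the problem reduces to computing $\sigma_{\max}(G_k)$ for $k=1,\ldots,\eta-1$ and, when $\e>\eta$, also $\sigma_{\max}(M_\eta)$. The case $\e<\eta$ follows from the case $\e>\eta$ by a perfect-shuffle permutation that converts $W_{\e,\eta}$ into $W_{\eta,\e}$ (interchanging the two Kronecker factors), so we may assume $\e\geq\eta$ without loss of generality.

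For the $G_k$ blocks I would form $G_k^{T}G_k$ and verify by a direct computation that it is the classical $k\times k$ symmetric tridiagonal Toeplitz matrix with $2$ on the diagonal and $1$ on the off-diagonals. Its eigenvalues are known to be $2+2\cos(j\pi/(k+1))$ for $j=1,\ldots,k$, so its largest eigenvalue is $2+2\cos(\pi/(k+1))=4\cos^{2}(\pi/(2(k+1)))$, which gives $\sigma_{\max}(G_k)=2\cos(\pi/(2(k+1)))$.

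The key obstacle is computing $\sigma_{\max}(M_k)$, because $M_k^{T}M_k$ is a tridiagonal matrix with the Toeplitz pattern broken by the modified top-left entry, namely $\mathrm{diag}(1,2,\ldots,2)$ with $1$'s on the off-diagonals. I would attack its eigenvalue problem $M_k^{T}M_k\,x=\mu x$ by setting $\mu=2+2\cos\theta$ and solving the three-term recurrence $x_{i+1}=(\mu-2)x_i-x_{i-1}$ via the ansatz $x_i=\sin(i\theta+\phi)$. The modified first equation $x_1+x_2=\mu x_1$ forces (after using $\sin(2\theta+\phi)=2\cos\theta\sin(\theta+\phi)-\sin\phi$) the relation $\sin(\theta+\phi)+\sin\phi=0$, i.e., $\phi=-\theta/2$, hence $x_i=\sin((i-\tfrac12)\theta)$. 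The last equation then imposes $\sin((k+\tfrac12)\theta)=0$, giving $\theta=2m\pi/(2k+1)$ for $m=1,\ldots,k$. Maximizing $\mu$ at $m=1$ yields $\sigma_{\max}(M_k)=\sqrt{2+2\cos(2\pi/(2k+1))}=2\cos(\pi/(2k+1))$.

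Finally, I would compare the candidates, using that $\cos(\pi/N)$ is strictly increasing in $N\geq 2$. Among the $G$-summands $G_1,\ldots,G_{\eta-1}$, the largest singular value comes from $G_{\eta-1}$ and equals $2\cos(\pi/(2\eta))$. When $\e>\eta$, the summands $M_\eta$ are present and contribute $2\cos(\pi/(2\eta+1))$, which dominates $2\cos(\pi/(2\eta))$ because $2\eta+1>2\eta$; therefore $\sigma_{\max}(W_{\e,\eta})=2\cos(\pi/(2\eta+1))=2\cos(\pi/(2\min(\e,\eta)+1))$. When $\e=\eta$, no $M_\eta$ block appears and $\sigma_{\max}(W_{\eta,\eta})=2\cos(\pi/(2\eta))$, matching \eqref{eq:normW}. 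The edge case $\eta=1$ (where the $G$-blocks disappear and $M_1=(1)$) is absorbed automatically, since the formulas give $2\cos(\pi/3)=1$ for $\e>1$ and $2\cos(\pi/2)=0$ for $\e=\eta=1$. The hardest step throughout is solving the mixed boundary-value problem for $M_k^{T}M_k$, since unlike the $G_k^{T}G_k$ case it is not a symmetric Toeplitz matrix and the half-integer phase shift must be derived rather than guessed.
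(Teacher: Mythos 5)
Your proposal is correct and follows essentially the same route as the paper: reduce via the permutation equivalence of Lemma \ref{lemma:direct_sum} to the blocks $G_k$ and $M_\eta$, compute $\sigma_{\max}(G_{\eta-1})=2\cos(\pi/(2\eta))$ from the tridiagonal Toeplitz matrix $G_{\eta-1}^TG_{\eta-1}$, compute $\sigma_{\max}(M_\eta)=2\cos(\pi/(2\eta+1))$, and compare. The only (immaterial) difference is that you derive the eigenvalues of $M_k^TM_k$ by solving the three-term recurrence with a phase-shifted sine ansatz, whereas the paper obtains the same values from the characteristic-polynomial identity $U_\eta(\mu)+U_{\eta-1}(\mu)=0$ in terms of Chebyshev polynomials of the second kind.
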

\begin{proof} As explained after the equation \eqref{eq:Weeta}, we may assume without loss of generality that $\e \geq \eta$. In addition, if $\eta =1$, then the result follows immediately from \eqref{eq:eta=1}. Thus, the rest of the proof assumes $\e \geq \eta > 1$.

Let us consider first the case $\e =\eta > 1$. Lemma \ref{lemma:direct_sum} implies that $\sigma_{\max}(W_{\eta,\eta}) = \max \{ \sigma_{\max}(G_{\eta-1}) ,..., \sigma_{\max}(G_{2}), \sigma_{\max}(G_{1}) \}$. In addition, since $G_k$ is a submatrix of $G_{k+1}$, we have that $\sigma_{\max}(G_{\eta-1}) \geq \cdots \geq \sigma_{\max}(G_{2}) \geq \sigma_{\max}(G_{1})$ \cite[Corollary 3.1.3]{Horn}. Therefore, $\sigma_{\max}(W_{\eta,\eta}) = \sigma_{\max}(G_{\eta-1})$. The singular values of $G_{\eta-1}$ are the square roots of the eigenvalues of
\[
G_{\eta-1}^T G_{\eta -1} =
\begin{bmatrix}
2 & 1 \\
1 & 2 & 1 \\
& 1 & \ddots & \ddots \\
& & \ddots & 2 & 1\\
& & & 1 & 2
\end{bmatrix} \in \mathbb{R}^{(\eta-1) \times (\eta-1)},
\]
which are known at least from the 1940s \cite[p. 111]{gantmacher-krein}. They are
\[
\lambda_j = 2\left(1 - \cos \frac{\pi j}{\eta} \right), \quad \mbox{for }j=1,2,\hdots,\eta-1.
\]
Therefore the maximum of these eigenvalues is
\[
\lambda_{\eta -1} = 2\left(1 - \cos \frac{\pi (\eta-1)}{\eta} \right) = 2\left(1 + \cos \frac{\pi}{\eta} \right) = 4 \cos^2 \frac{\pi}{2\eta} \, .
\]
The result follows from $\sigma_{\max}(W_{\eta,\eta}) = \sigma_{\max}(G_{\eta-1}) = \sqrt{\lambda_{\eta -1}}$.

Next, we consider the case $\e >\eta > 1$. In this situation, Lemma \ref{lemma:direct_sum} implies that $\sigma_{\max}(W_{\e,\eta}) = \max \{ \sigma_{\max}(M_{\eta}), \sigma_{\max}(G_{\eta-1}) ,..., \sigma_{\max}(G_{1}) \} = \sigma_{\max}(M_{\eta}),$
where we have used again that $G_k$ is a submatrix of $G_{k+1}$ and that $G_{\eta-1}$ is a submatrix of $M_\eta$. The singular values of $M_\eta$ are the square roots of the eigenvalues of $M_\eta M_\eta^T$, i.e., the square roots of the roots of the characteristic equation
\[
\det(\lambda I- M_\eta M_\eta^T) = \det
\begin{bmatrix}
(\lambda-2) & -1 \\
-1 & (\lambda-2) & -1 \\
& -1 & \ddots & \ddots \\
& & \ddots & (\lambda-2) & -1\\
& & & -1 &(\lambda-1)
\end{bmatrix} =0.
\]
With the change of variable $\lambda=2\mu+2$, the equation above becomes
\begin{equation}\label{eq:poly_mu}
\det
\begin{bmatrix}
2\mu & -1 \\
-1 & 2\mu & -1 \\
& -1 & \ddots & \ddots \\
& & \ddots & 2\mu & -1\\
& & & -1 &2\mu+1
\end{bmatrix}=U_\eta(\mu)+U_{\eta-1}(\mu) =0,
\end{equation}
where $U_\ell(\mu)$ is the degree-$\ell$ Chebyshev polynomial of the second kind. The first equality in \eqref{eq:poly_mu} can be obtained directly from \cite[eq. (11)]{kulkarni1999} by applying the recurrence relation of the Chebyshev polynomials of the second kind\footnote{The reader should take into account that in \cite{kulkarni1999} the characteristic polynomial is defined as $\det(M_\eta M_\eta^T-\la I)$ and the change of variable is slightly different.}. It can also be easily established from results in \cite{GOOD01011961}. Observe that Gershgorin Circle Theorem \cite[Theorem 7.2.1]{golubvanloan4} implies that the eigenvalues of $M_\eta M_\eta^T$ satisfy
$0\leq \lambda \leq 4$. Therefore, the roots of \eqref{eq:poly_mu} satisfy $-1\leq \mu \leq 1$. Moreover, we also have that $1$ and $-1$ are not roots of \eqref{eq:poly_mu} since $U_\eta(1)+U_{\eta-1}(1) = 2\eta+1\neq 0$ and $U_\eta(-1)+U_{\eta-1}(-1)=(-1)^\eta\neq 0$.
Thus, the roots of \eqref{eq:poly_mu} satisfy $-1< \mu < 1$.
With the change of variable $\mu = \cos\theta$, we get the equation
\[
U_\eta(\mu)+U_{\eta-1}(\mu) = \frac{1}{\sin\theta}(\sin(\eta+1)\theta+\sin \eta\theta) = 2  \,
\frac{\cos\frac{\theta}{2}}{\sin\theta} \, \sin\frac{(2\eta+1)\theta}{2} =0,
\]
whose roots are $\theta_j = 2\pi j/(2\eta + 1)$, $j=1,\ldots, \eta$ in the interval $0< \theta < \pi$. We finally obtain that the eigenvalues of $M_\eta M_\eta^T$ are
\begin{equation} \label{eq:laj}
\la_j = 2 + 2 \cos\frac{2j\pi}{2\eta+1},\quad \mbox{for }j=1,2,\hdots,\eta.
\end{equation}
The largest one is $\la_1$, which implies
\[ \sigma_{\max}(W_{\e,\eta}) =
\sigma_{\max} (M_\eta)
= \sqrt{ 2 + 2 \cos\frac{2\pi}{2\eta+1} }
=
2\cos\frac{\pi}{2\eta+1}.
\]

\end{proof}

Finally, Lemma \ref{lemma_min_singular_value} follows from combining Lemmas \ref{lemm:appT1} and \ref{lemm:appT2}, Proposition \ref{prop:normW} and a elementary trigonometric identity. Observe that $\sigma_{\min} (T) \ne 0$, which implies that $T$ has full row rank.

\section{Proof of Theorem \ref{thm:unperturbed21block}} \label{sec:proof-sigmamin2}
Taking into account that $L_\e (\la) \otimes I_n$ and $\Lambda_\e (\la)^T \otimes I_n$ are dual minimal bases with all their row degrees equal, respectively, to $1$ and $\e$, part (a) is an immediate consequence of Theorem \ref{thm:pencilminconv}. Part (b) can also be seen as a consequence of Theorem \ref{thm:polyminconv} (except the obvious equality $C_{0}(\Lambda_\e (\la)^T \otimes I_n) = I_{(\e + 1) n}$), although it can be deduced directly because the matrices
$C_{0}(\Lambda_\e (\la)^T \otimes I_n)$ and $C_{1}(\Lambda_\e (\la)^T \otimes I_n)$
are very simple.

In order to prove part (c), we first note that $C_{\e -1}(L_\e (\la) \otimes I_n)=C_{\e -1}(L_\e (\la) )\otimes I_n$ and $C_{\e}(L_\e (\la) \otimes I_n)=C_{\e}(L_\e (\la))\otimes I_n$. So, it suffices to look at $C_{\e -1}(L_\e (\la))$ and $C_{\e}(L_\e (\la))$. We then
point out that there exist diagonal sign scalings, $S_1, S_2, S_3, S_4,$ (and hence orthogonal matrices) which get rid of all negative signs in $C_{\e -1}(L_\e (\la))$ and $C_{\e}(L_\e (\la))$, and that with the notation at the beginning of Section \ref{sec:firststep} lead to:
$$
S_1C_{\e - 1} (L_\e (\la))S_2 =: \hat C_{\e - 1} (L_\e (\la))
= \underbrace{
	\begin{bmatrix}
	F_{\e} \\
	E_{\e} & \ddots \\
	& \ddots & F_{\e} \\
	& & E_{\e}
	\end{bmatrix}}_{\displaystyle \e \; \mbox{block columns}}
\left. \phantom{\begin{array}{l} l\\l\\l\\l\\l \end{array}} \! \! \!\!\! \!\!\!\right\}
\e+1 \; \mbox{block rows} \, ,
$$
and
$$
S_3C_{\e} (L_\e (\la))S_4 =: \hat C_{\e} (L_\e (\la)) =
\underbrace{
	\begin{bmatrix}
	F_{\e } \\
	E_{\e } & \ddots \\
	& \ddots & F_{\e } \\
	& & E_{\e }
	\end{bmatrix}}_{\displaystyle \e+1 \; \mbox{block columns}}
\left. \phantom{\begin{array}{l} l\\l\\l\\l\\l \end{array}} \! \! \!\!\! \!\!\!\right\}
\e+2 \mbox{ block rows} \,.
$$

Clearly we can as well look at the singular values of the matrices $\hat C_{\e - 1} (L_\e (\la))$ and $\hat C_{\e} (L_\e (\la))$ since they are orthogonally equivalent to $C_{\e - 1} (L_\e (\la))$ and $C_{\e} (L_\e (\la))$, respectively. We then show that there exist row and column permutations (and hence orthogonal transformations) that put $\hat C_{\e - 1} (L_\e (\la))$ and $\hat C_{\e} (L_\e (\la))$ in the following block diagonal forms
\begin{align}
\Pi_1 \hat C_{\e - 1} (L_\e (\la)) \Pi_2 & =
M_{\e}\oplus M_{\e}^T\oplus \cdots \oplus M_1\oplus M_1^T ,  \label{eq:lastpermut1}
\\
\Pi_3 \hat C_{\e} (L_\e (\la)) \Pi_4 & =
M_{\e}\oplus M_{\e}^T\oplus \cdots \oplus M_1\oplus M_1^T \oplus G^T_\e ,
\label{eq:lastpermut2}
\end{align}
where $M_k$ and $G_k$ were defined in \eqref{eq:GkMk}. Since a formal proof of \eqref{eq:lastpermut1} and \eqref{eq:lastpermut2} is long, we simply sketch the main ideas.
Notice that each of the matrices  $\hat C_{\e - 1}(L_\e (\la))$ and
$\hat C_{\e}(L_\e (\la))$ have one or two 1's in each column or row. Moreover,
note that $\hat C_{\e - 1}(L_\e (\la))$ has exactly $2 \e$ columns with only one ``1'' and exactly $2 \e$ rows with only one ``1'', while $\hat C_{\e}(L_\e (\la))$ has exactly $2 (\e +1)$ columns with only one ``1'' and exactly $2 \e$ rows with only one ``1''. Then, starting from the leading column in $\hat C_{\e - 1} (L_\e (\la))$ with a single ``1'', one can then reconstruct a staircase $M_\e$ and starting from its trailing column with a single ``1'', one can reconstruct a staircase $M_\e^T$. The corresponding index selection in Matlab notation for these two submatrices is:
$$ M_\e = \hat C_{\e-1}(L_\e (\la))(\e+1:\e+1:\e^2+\e, 1:\e+2:\e^2+\e-1) , $$
$$ M_\e^T = \hat C_{\e-1}(L_\e (\la))(1:\e+1:\e^2, 2:\e+2:\e^2+\e).
$$
After permuting these two blocks out of
$\hat C_{\e - 1} (L_\e (\la))$ one continues in a similar way to recover all other
blocks $M_k$ and $M_k^T$, for $k=\e-1, \ldots, 1$.
For the matrix $\hat C_{\e} (L_\e (\la))$, the procedure is similar, except that in the first step, one extracts
$$ G_\e^T = \hat C_{\e} (L_\e (\la)) (\e+1:\e+1:\e^2+\e, 1:\e+2:\e^2+2\e+1) $$
starting from the ``1'' in the leading column.
The rest of the extraction is similar to the one for the matrix $\hat C_{\e -1} (L_\e (\la))$.

So the smallest singular values of $\hat C_{\e - 1} (L_\e (\la))$ and $\hat C_{\e} (L_\e (\la))$ are those of the diagonal blocks with the smallest singular values. This turns out to be $M_\e$ for both matrices, since the smallest singular value of the full-row rank matrix $G_\e^T=\left[M_\e | e_\e \right]$ is larger than that of $M_\e$ \cite[Corollary 3.1.3]{Horn} and, according to \eqref{eq:laj}, $\sigma_{\rm min} (M_\e) < \sigma_{\rm min} (M_{\e -1}) < \cdots < \sigma_{\rm min} (M_{1})$.
The smallest singular value of $M_\e$ is the square root
of the smallest eigenvalue given in \eqref{eq:laj}~:
$$\sigma_{\min}(M_\e) = \sqrt{2+2\cos \left(\frac{2\e\pi}{2\e+1} \right)}=2 \sin\left(\frac{\pi}{4\e+2} \right).$$
The inequality $2 \sin(\frac{\pi}{4\e+2})\ge \frac{3}{2\e+1} \ge \frac{3}{2(\e+1)}$ follows then from the inequality $\sin (x) \geq 3x/\pi$ for $0 \leq x \leq \pi/6$ since we assumed $\e\ge 1$.

The proof of part (d) follows from the equality $C_{0}(\Lambda_\e (\la)^T \otimes I_n) = I_{(\e + 1) n}$ and the fact that an obvious column permutation $\Pi$ allows us to prove that $C_{1}(\Lambda_\e (\la)^T \otimes I_n) \, \Pi = I_n \oplus (I_{\e n} \otimes [1,1]) \oplus I_n$. Therefore, the singular values of $C_{1}(\Lambda_\e (\la)^T \otimes I_n)$ are $1$ (with multiplicity $2n$) and $\sqrt{2}$ (with multiplicity $\e n$).

\bibliographystyle{siam}
\bibliography{shortstrings,biblio}

\end{document}